\renewcommand{\MR}[1]{} 
\numberwithin{equation}{section}
\theoremstyle{definition}
\newtheorem{definition}{Definition}[section]
\theoremstyle{plain}
\newtheorem{proposition}[definition]{Proposition}
\newtheorem{corollary}[definition]{Corollary}
\newtheorem{theorem}[definition]{Theorem}
\newtheorem{lemma}[definition]{Lemma}
\newtheorem{assumption}[definition]{Assumption}
\theoremstyle{remark}
\newtheorem{remark}[definition]{Remark}
\DeclarePairedDelimiter{\abs}{|}{|}
\DeclarePairedDelimiter{\norm}{\|}{\|}
\DeclarePairedDelimiter{\set}{\{}{\}}
\DeclareMathOperator{\divergence}{div}
\DeclareMathOperator{\di}{d\!}
\DeclareMathOperator{\esssup}{ess\,sup}
\newcommand{\e}{\varepsilon}
\newcommand{\R}{\mathbb{R}}
\newcommand{\Lip}{\textrm{Lip}}
\newcommand{\ul}{\mathrm{ul}}
\newcommand{\loc}{\mathrm{loc}}
\renewcommand{\phi}{\varphi}
\newcommand{\XX}{\mathcal{X}}
\newcommand{\VV}{\mathcal{V}}
\newcommand{\wass}{\mathsf{W}}
\newcommand{\plan}{\mathsf{Plan}}
\newcommand{\proj}{\mathsf{p}}
\newcommand{\prob}{{\mathscr P}}
\newcommand{\class}{\mathcal A}
\newcommand*{\mint}[1]{%
  \mint@l{#1}{}%
}
\newcommand*{\mint@l}[2]{%
  \@ifnextchar\limits{%
    \mint@l{#1}%
  }{%
    \@ifnextchar\nolimits{%
      \mint@l{#1}%
    }{%
      \@ifnextchar\displaylimits{%
        \mint@l{#1}%
      }{%
        \mint@s{#2}{#1}%
      }%
    }%
  }%
}
\newcommand*{\mint@s}[2]{%
  \@ifnextchar_{%
    \mint@sub{#1}{#2}%
  }{%
    \@ifnextchar^{%
      \mint@sup{#1}{#2}%
    }{%
      \mint@{#1}{#2}{}{}%
    }%
  }%
}
\def\mint@sub#1#2_#3{%
  \@ifnextchar^{%
    \mint@sub@sup{#1}{#2}{#3}%
  }{%
    \mint@{#1}{#2}{#3}{}%
  }%
}
\def\mint@sup#1#2^#3{%
  \@ifnextchar_{%
    \mint@sup@sub{#1}{#2}{#3}%
  }{%
    \mint@{#1}{#2}{}{#3}%
  }%
}
\def\mint@sub@sup#1#2#3^#4{%
  \mint@{#1}{#2}{#3}{#4}%
}
\def\mint@sup@sub#1#2#3_#4{%
  \mint@{#1}{#2}{#4}{#3}%
}
\newcommand*{\mint@}[4]{%
  \mathop{}%
  \mkern-\thinmuskip
  \mathchoice{%
    \mint@@{#1}{#2}{#3}{#4}%
        \displaystyle\textstyle\scriptstyle
  }{%
    \mint@@{#1}{#2}{#3}{#4}%
        \textstyle\scriptstyle\scriptstyle
  }{%
    \mint@@{#1}{#2}{#3}{#4}%
        \scriptstyle\scriptscriptstyle\scriptscriptstyle
  }{%
    \mint@@{#1}{#2}{#3}{#4}%
        \scriptscriptstyle\scriptscriptstyle\scriptscriptstyle
  }%
  \mkern-\thinmuskip
  \int#1%
  \ifx\\#3\\\else_{#3}\fi
  \ifx\\#4\\\else^{#4}\fi  
}
\newcommand*{\mint@@}[7]{%
  \begingroup
    \sbox0{$#5\int\m@th$}%
    \sbox2{$#5\int_{}\m@th$}%
    \dimen2=\wd0 %
    \let\mint@limits=#1\relax
    \ifx\mint@limits\relax
      \sbox4{$#5\int_{\kern1sp}^{\kern1sp}\m@th$}%
      \ifdim\wd4>\wd2 %
        \let\mint@limits=\nolimits
      \else
        \let\mint@limits=\limits
      \fi
    \fi
    \ifx\mint@limits\displaylimits
      \ifx#5\displaystyle
        \let\mint@limits=\limits
      \fi
    \fi
    \ifx\mint@limits\limits
      \sbox0{$#7#3\m@th$}%
      \sbox2{$#7#4\m@th$}%
      \ifdim\wd0>\dimen2 %
        \dimen2=\wd0 %
      \fi
      \ifdim\wd2>\dimen2 %
        \dimen2=\wd2 %
      \fi
    \fi
    \rlap{%
      $#5%
        \vcenter{%
          \hbox to\dimen2{%
            \hss
            $#6{#2}\m@th$%
            \hss
          }%
        }%
      $%
    }%
  \endgroup
}
\begin{document}

\title[Existence and stability of weak solutions of the Vlasov--Poisson system]{Existence and Stability of weak solutions of the Vlasov--Poisson system in localized Yudovich spaces}

\author[G.~Crippa]{Gianluca Crippa}
\address[G.~Crippa]{Department Mathematik und Informatik, Universität Basel, Spiegelgasse~1, 4051 Basel, Switzerland}
\email{gianluca.crippa@unibas.ch}

\author[M.~Inversi]{Marco Inversi}
\address[M.~Inversi]{Department Mathematik und Informatik, Universität Basel, Spiegelgasse~1, 4051 Basel, Switzerland}
\email{marco.inversi@unibas.ch}

\author[C.~Saffirio]{Chiara Saffirio}
\address[C.~Saffirio]{Department Mathematik und Informatik, Universität Basel, Spiegelgasse~1, 4051 Basel, Switzerland}
\email{chiara.saffirio@unibas.ch}

\author[G.~Stefani]{Giorgio Stefani}
\address[G.~Stefani]{Scuola Internazionale Superiore di Studi Avanzati (SISSA), via Bonomea~265, 34136 Trieste (TS), Italy}
\email{giorgio.stefani.math@gmail.com {\normalfont or} gstefani@sissa.it}

\date{\today}

\keywords{Vlasov--Poisson equations, Yudovich spaces, Osgood condition, Lagrangian stability, Cauchy problem}

\subjclass[2020]{Primary 35Q83. Secondary 82D10, 34A12.}

\thanks{\textit{Acknowledgments}. The first and second-named authors have been partially funded by the SNF grant FLUTURA: Fluids, Turbulence, Advection No.~212573. The third-named author acknowledges the NCCR SwissMAP and the support of the SNSF through the Eccellenza project PCEFP2\_181153. The fourth-named author is member of the Istituto Nazionale di Alta Matematica (INdAM), Gruppo Nazionale per l'Analisi Matematica, la Probabilità e le loro Applicazioni (GNAMPA), is partially supported by the INdAM--GNAMPA 2022 Project \textit{Analisi geometrica in strutture subriemanniane}, codice CUP\_E55\-F22\-000\-270\-001 and by the INdAM--GNAMPA 2023 Project \textit{Problemi variazionali per funzionali e operatori non-locali}, codice CUP\_E53\-C22\-001\-930\-001, and has received funding from the European Research Council (ERC) under the European Union’s Horizon 2020 research and innovation program (grant agreement No.~945655). }

\begin{abstract} 
We consider the Vlasov--Poisson system both in the repulsive (electrostatic potential) and in the attractive (gravitational potential) cases. 
In our first main theorem, we prove the uniqueness and the quantitative stability of Lagrangian solutions $f=f(t,x,v)$ whose associated spatial density $\rho_f=\rho_f(t,x)$ is potentially unbounded but belongs to suitable uniformly-localized Yudovich spaces. This requirement imposes a condition of slow growth on the function $p \mapsto \|\rho_f(t,\cdot)\|_{L^p}$ uniformly in time. Previous works by Loeper, Miot and Holding--Miot have addressed the cases of bounded spatial density, i.e., $\|\rho_f(t,\cdot)\|_{L^p} \lesssim 1$, and spatial density such that $\|\rho_f(t,\cdot)\|_{L^p} \sim p^{1/\alpha}$ for $\alpha\in[1,+\infty)$. 
Our approach is Lagrangian and relies on an explicit estimate of the modulus of continuity of the electric field and on a second-order Osgood lemma. It also allows for iterated-logarithmic perturbations of the linear growth condition.
In our second main theorem, we complement the aforementioned result by constructing solutions whose spatial density sharply satisfies such iterated-logarithmic growth. 
Our approach relies on real-variable techniques and extends the strategy developed for the Euler equations by the first and fourth-named authors. It also allows for the treatment of more general equations that share the same structure as the Vlasov--Poisson system. Notably, the uniqueness result and the stability estimates hold for both the classical and the relativistic Vlasov--Poisson systems.
\end{abstract}

\maketitle

\section{Introduction}

\subsection{Framework}

For some fixed $T\in(0,+\infty)$, we consider the \emph{Vlasov--Poisson system} 
\begin{equation} \label{eq:vp}
\left\{\!\!
\begin{array}{ll}
\partial_t f + v\cdot \nabla_x f + E_f \cdot \nabla_v f = 0 
& \text{in}\ (0,T) \times \R^{2d},
\\[2ex] 
E_f(t,x) = \kappa\displaystyle\int_{\R^d} K(x-y) \, \rho_f(t,y) \di y & \text{in}\ (0,T) \times \R^d,
\\[2ex] 
 \rho_f(t,x) = \displaystyle\int_{\R^d} f(t,x,v) \di v & \text{in}\ (0,T) \times \R^d, 
\\[3ex] 
f(0,x,v) = f_0(x,v) & \text{in } \R^{2d},
\end{array}
\right.
\end{equation}
where 
$f_0\in L^1(\R^{2d})$ is the initial datum, 
$f\in L^\infty([0,T];L^1(\R^{2d}))$ is the unknown,
$\rho_f\in L^\infty([0,T];L^1(\R^d))$ is the spatial density associated with $f$, 
$\kappa\in\{-1,+1\}$ 
and  
$K\colon \R^d \to \R^d$ is the \emph{Riesz kernel}, given by
\begin{equation} \label{eq:kernel}
K(z) = \frac{x}{\abs{x}^d},
\quad
x\in\R^d\setminus\set*{0}.
\end{equation} 
In particular, the vector field $E_f\in L^\infty([0,T];L^1_\loc(\R^d;\R^d))$ is well defined.
For $d=3$, the Vlasov--Poisson system~\eqref{eq:vp} describes the time evolution of the density $f$ of plasma consisting of charged particles with long-range interaction, e.g., a repulsive Coulomb potential for $\kappa=1$ or an attracting gravitational potential for $\kappa=-1$.

The Vlasov--Poisson system \eqref{eq:vp} has been extensively investigated. 
Existence and uniqueness of classical solutions of the system~\eqref{eq:vp} under some regularity assumptions on the initial data go back to Iordanski~\cite{I61} for $d=1$ and to Okabe--Ukai~\cite{UO78} for $d=2$.
In any dimension, global existence of weak solutions with finite energy 
\begin{equation*}
\sup_{t\in[0,T]}
\int_{\R^{2d}}|v|^2\,f(t,x,v)\di x\di v
+
\frac\kappa2
\int_{\R^d}|E_f(t,x)|^2\di x
<+\infty
\end{equation*}
is due to Arsen'ev~\cite{A75}.
For $d=3$, global existence and uniqueness have been addressed by Bardos--Degond~\cite{BD85} for classical solutions with small initial data, and then  by Pfaffelmoser~\cite{P92} and Lions--Perthame~\cite{LP91} using different methods. 
The main idea of~\cite{P92} is to exploit \emph{Lagrangian} techniques to prove global existence and uniqueness of classical solutions with compactly supported initial data. 
The approach of~\cite{LP91}, instead, relies on an \emph{Eulerian} point of view, yielding existence of global weak solutions with finite velocity moments.
More precisely, for $d=3$, if $f_0\in L^1(\R^d)\cap L^\infty(\R^d)$ is such that 
\begin{equation}
\label{eq:m-moment-vel}
\int_{\R^{2d}}|v|^m
f_0(x,v)\di x\di v<+\infty
\quad
\text{for some}\ m>3, 
\end{equation}
then there exists a corresponding weak solution $f\in L^\infty([0,+\infty);L^1(\R^{2d}))$ such that 
\begin{equation*}
\sup_{t\in[0,T]}
\int_{\R^{2d}}|v|^m 
f(t,x,v)\di x\di v<+\infty
\quad
\text{for any}\ T>0.
\end{equation*}
For further developments concerning the propagation of moments and global existence of weak solutions of the Vlasov--Poisson system~\eqref{eq:vp}, we refer the reader to~\cites{GJP00,S09,P12,P14,DMS15,CLS18}.

Sufficient conditions for uniqueness of weak solutions of the Vlasov--Poisson system~\eqref{eq:vp} have been first obtained in~\cite{LP91}, provided that~\eqref{eq:m-moment-vel} holds with $m>6$ and a technical assumption on the support of the initial data is satisfied. 
A simpler criterion has been then proposed by Robert~\cite{R97} for compactly supported weak solutions, and later extended by Loeper~\cite{L06} to measure-valued solutions~$f$ with spatial density such that 
\begin{equation}\label{eq:Loeper-growth}
\rho_f\in L^\infty([0,T];L^\infty(\R^d)).
\end{equation}
Recently, Miot~\cite{M16} generalized the uniqueness criterion of~\cite{LP91} to measure-valued solutions~$f$ with spatial density such that, for some $T>0$, 
\begin{equation}\label{eq:Miot-growth}
\sup_{t\in [0,T]}\sup_{p\geq 1} \dfrac{\|\rho_f(t,\cdot)\|_{L^p}}{p}<+\infty.
\end{equation}
The uniqueness condition~\eqref{eq:Miot-growth} is satisfied by some non-trivial weak solutions with  initial data having unbounded macroscopic density, see~\cite{M16}*{Ths.~1.2 and~1.3}.    
Later, Holding--Miot~\cite{HM18} provided a uniqueness criterion  interpolating between the conditions~\eqref{eq:Loeper-growth} and~\eqref{eq:Miot-growth} by considering measure-valued solutions~$f$ with spatial density such that, for some $T>0$ and $\alpha\in[1,+\infty)$,
\begin{equation}
\label{eq:Holding-Miot-growth}
\sup_{t\in [0,T]}\sup_{p\geq \alpha} \dfrac{\|\rho_f(t,\cdot)\|_{L^p}}{p^{1/\alpha}}<+\infty.
\end{equation}
The case $\alpha=1$ corresponds to~\eqref{eq:Miot-growth}, while the limiting case $\alpha=+\infty$ corresponds to~\eqref{eq:Loeper-growth}.
Condition~\eqref{eq:Holding-Miot-growth} implies that~$\rho_f$ belongs to an \emph{exponential Orlicz space}, see~\cite{HM18}*{Sec.~1.1.1}.
Conditions~\eqref{eq:Miot-growth} and~\eqref{eq:Holding-Miot-growth} allow to consider initial data with compact support in velocity as well as \emph{Maxwell--Boltzmann distributions} with exponential decay as $|v|\to+\infty$, see the comments below~\cite{M16}*{Th.~1.2} and~\cite{HM18}*{Prop.~1.14}.

\subsection{Yudovich spaces and modulus of continuity}

The main aim of the present paper is to establish existence and stability properties of weak solutions of the Vlasov--Poisson system~\eqref{eq:vp}, extending the results obtained in~\cites{L06,M16,HM18} to measure-valued solutions with spatial density belonging to \emph{uniformly-localized Yudovich spaces}.

We consider solutions~$f$ of the system~\eqref{eq:vp} whose spatial density~$\rho_f$ satisfies 
\begin{equation}
\label{eq:Yudo-growth}
\sup_{t\in [0,T]}\sup_{p\geq 1} \dfrac{\|\rho_f(t,\cdot)\|_{L^p}}{\Theta(p)}<+\infty
\end{equation}
for some fixed increasing function $\Theta\colon [0, +\infty) \to (0, +\infty)$, called \emph{growth function}.
Note that~\eqref{eq:Loeper-growth} corresponds to $\Theta$ constant, \eqref{eq:Miot-growth} to $\Theta(p)=p$ and~\eqref{eq:Holding-Miot-growth} to $\Theta(p)=p^{\frac1\alpha}$.
Also notice that the behavior of $\Theta(p)$ as $p\to+\infty$ only matters. 
We call such densities \emph{admissible} for the system~\eqref{eq:vp}, and we let
\begin{equation}
\label{eq:def_class}
\class^\Theta([0,T])
=
\set*{
f\in L^\infty([0,T];L^1(\R^{2d})) :
\rho_f\in L^\infty([0,T];Y^\Theta_\ul(\R^d))
}.
\end{equation}
Here and in the following, we let 
\begin{equation}
Y^\Theta_{\ul}(\R^d) 
= \set*{ f \in \bigcap_{p \in [1, +\infty)} L^p_{\ul}(\R^d) \colon \norm{f}_{Y^\Theta_{\ul}} = \sup_{p \in [1, +\infty)} \frac{\norm{f}_{L^p_{\ul}}}{\Theta(p)} < +\infty
}
\label{eq:def_yudo_ul} 
\end{equation}
be the \emph{uniformly-localized Yudovich space}, where, for $p\in[1,+\infty)$,
\begin{equation*} 
L^p_{\ul}(\R^d) 
= 
\set*{ 
f \in L^p_{\loc}(\R^d) \colon \norm{f}_{L^p_{\ul}} = \sup_{x \in \R^d} \norm{f}_{L^p(B_1(x))} < +\infty 
},
\end{equation*}
is the \emph{uniformly-localized $L^p$ space} on $\R^d$.
We also define the \emph{Yudovich space} $Y^\Theta(\R^d)$ as in~\eqref{eq:def_yudo_ul} by dropping the subscript `$\ul$' everywhere. 
These spaces were first introduced by Yudovich~\cite{Y95} to provide uniqueness of unbounded weak solutions of in\-com\-pres\-si\-ble inviscid $2$-dimensional Euler's equations.
We also refer to the recent works~\cites{CS21,CMZ19,T04,TTY10}. 

Following~\cites{L06,M16,HM18}, our starting point is the relation between the $L^p$ growth condition~\eqref{eq:Yudo-growth} and the continuity of the vector field~$E_f$, see \cref{res:relation_class_modulus} below.
Our result encodes the $\log$-Lipschitz regularity obtained in~\cite{L06}*{Lem.~3.1} following from~\eqref{eq:Loeper-growth}, as well as its more general version proved in~\cite{HM18}*{Lem.~2.1} concerning~\eqref{eq:Miot-growth} and~\eqref{eq:Holding-Miot-growth}. 
As for Euler's equations~\cite{CS21}, the main novelty here is that, once the spatial density~$\rho_f$ satisfies~\eqref{eq:Yudo-growth}, then we can explicitly express the (\emph{generalized}) \emph{modulus of continuity} of~$E_f$ depending on the chosen growth function~$\Theta$, namely, $\phi_\Theta\colon[0,+\infty)\to[0,+\infty)$ defined as
\begin{equation}
\phi_{\Theta} (r) =
\left\{\!\!
\begin{array}{ll}
0
&
\text{for}\ r=0,
\\[2ex]
r\, |\log r|\, \Theta(\abs*{\log r}) 
& 
\text{for}\ 
r \in (0, e^{-d-1}), 
\\[2ex] 
e^{-d-1} \,(d+1) \,\Theta(d+1) 
& 
\text{for}\ 
r \in [e^{-d-1}, +\infty)
\end{array}
\right.
\label{eq:phi_Theta}
\end{equation} 
(the choice of the constant $e^{-d-1}$ is irrelevant and is made for convenience only, see below).
With a slight abuse of notation, we set
\begin{equation*}
C^{0,\phi_\Theta}_b(\R^d;\R^d) = \set*{ E\in L^\infty(\R^d;\R^d)
:\ \sup_{x\neq y} \frac{\abs{E(x)- E(y)}}{\phi_\Theta (\abs{x-y})} < +\infty }. 
\end{equation*}

\begin{lemma}[Modulus of continuity]
\label{res:relation_class_modulus}
If $f\in\class^\Theta([0,T])$, then
\begin{equation*}
E_f\in L^\infty([0,T];C^{0,\phi_\Theta}_b(\R^d;\R^d)).
\end{equation*}   
\end{lemma}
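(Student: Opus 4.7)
The plan is to estimate $E_f = \kappa K * \rho_f$ directly by splitting the convolution according to the size of $\abs{x - z}$. Two ingredients enter the estimates: on the one hand, $f \in L^\infty([0,T]; L^1(\R^{2d}))$ gives $\rho_f \in L^\infty([0,T]; L^1(\R^d))$, which supplies integrability of the kernel at infinity; on the other hand, the hypothesis $f \in \class^\Theta([0,T])$ controls local concentration through the bound $\norm{\rho_f(t,\cdot)}_{L^p_\ul} \leq \norm{\rho_f}_{Y^\Theta_\ul}\, \Theta(p)$ for every $p \in [1, +\infty)$. The $L^\infty$ bound on $E_f$ would follow by splitting at $\abs{x - z} = 1$ and applying H\"older with some fixed $p > d$ to the local piece (using $|{\cdot}|^{1-d} \in L^{p'}(B_1)$), while the far piece is controlled by $\norm{\rho_f}_{L^1}$ since $\abs{x - z}^{1-d} \leq 1$ on $B_1(x)^c$.

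For the modulus of continuity, I would fix $x \neq y$ and set $r := \abs{x - y}$. The case $r \geq e^{-d-1}$ is immediate from the $L^\infty$ bound, so I focus on $r < e^{-d-1}$. Writing $E_f(t,x) - E_f(t,y) = \kappa \int [K(x-z) - K(y-z)]\, \rho_f(t,z) \di z$, I would split the domain into a \emph{near} zone $N := B_{2r}(x)$, an \emph{intermediate} zone $I := B_1(x) \setminus B_{2r}(x)$, and a \emph{far} zone $F := B_1(x)^c$. On $N$, I bound $\abs{K(x-z) - K(y-z)} \leq \abs{x-z}^{1-d} + \abs{y-z}^{1-d}$ and apply H\"older on each term on a ball of radius comparable to $r$, yielding a contribution of order $r^{1-d/p}\, \Theta(p)$. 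On $I$, I use the mean-value bound $\abs{K(x-z) - K(y-z)} \leq C_d\, r\, \abs{x-z}^{-d}$ (valid since $\abs{x-z} \geq 2r = 2\abs{x-y}$) and decompose $I$ dyadically into shells $A_k := \set{2^k r \leq \abs{x-z} < 2^{k+1} r}$ for $1 \leq k \leq k_0 \sim \abs{\log r}$; applying H\"older on each $A_k$ (which is covered by a bounded number of unit balls) gives a per-shell contribution of order $r\,(2^k r)^{-d/p}\, \Theta(p)$, summing to order $\abs{\log r}\, r^{1-d/p}\, \Theta(p)$. On $F$, the mean-value bound combined with $\rho_f \in L^1$ yields a contribution of order $r$.

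The final step is to choose $p := \abs{\log r}$, which is admissible since $r < e^{-d-1}$ forces $p > d + 1$. Then $r^{-d/p} = e^d$ is an absolute constant and $\Theta(p) = \Theta(\abs{\log r})$, so the three contributions combine to $\abs{E_f(t,x) - E_f(t,y)} \leq C\, r\, \abs{\log r}\, \Theta(\abs{\log r}) = C\, \phi_\Theta(r)$, as required.

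The hard part will be tracking the H\"older constants uniformly in the exponent $p$: on a ball of radius $\varrho$, $\norm{\,\abs{\,\cdot\,}^{1-d}}_{L^{p'}}$ equals $C_{d,p}\, \varrho^{1-d/p}$ with $C_{d,p}$ containing a factor $(p-d)^{-1/p'}$ that blows up as $p \to d^+$. The threshold $r < e^{-d-1}$ in the definition of $\phi_\Theta$ and the choice $p = \abs{\log r}$ are precisely what ensure $p - d > 1$, keeping this constant under control. Apart from this subtlety, the estimate is classical in spirit; the dyadic decomposition over $I$ is what produces the characteristic logarithmic factor of Yudovich-type moduli of continuity.
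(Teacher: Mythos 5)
Your proposal is correct and follows essentially the same route as the paper's proof (Proposition 3.1): the same near/intermediate/far splitting of the convolution at scales $\sim r$ and $O(1)$, H\"older's inequality against the $L^p_{\mathrm{ul}}$ norm on the local pieces, the $L^1$ norm on the far piece, and the optimization $p=\abs{\log r}$ with the threshold $e^{-d-1}$ keeping the H\"older constants under control. The only (harmless) variations are that you generate the logarithmic factor on the intermediate annulus by a dyadic shell decomposition where the paper computes a single explicit radial integral yielding the factor $p$ directly, and that you invoke a mean-value gradient bound for the kernel oscillation where the paper proves the pointwise estimate algebraically (its Lemma 3.3).
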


The proof of  \cref{res:relation_class_modulus} revisits a classical strategy for proving Morrey's estimates for Riesz-type potential operators, see~\cite{MB02}*{Chap.~8} and~\cite{M16}*{Lem.~2.2} (for strictly related results see~\cite{GS15}*{Ths.~A and~B}).
Here we adopt the  elementary approach proposed in~\cite{CS21}*{Sec.~2}, generalizing the computations done in the $2$-dimensional case to any dimension.

\subsection{Weak solutions and transport equation}

A simple but quite crucial byproduct of \cref{res:relation_class_modulus} is that $fE_f\in L^\infty([0,T];L^1(\R^{2d};\R^d))$ whenever $f\in\class^\Theta([0,T])$.
This allows us to define weak solution of the system~\eqref{eq:vp} among admissible densities, as follows.  

\begin{definition}[Admissible weak solution]
\label{def:weak_sol_vp}
We say that $f\in \class^\Theta([0,T])$ is an \emph{admissible weak solution} of the system~\eqref{eq:vp} starting from the initial datum $f_0\in L^1(\R^{2d})$ if 
\begin{equation*}
\int_0^T\int_{\R^{2d}}
\big(\partial_t\psi+v\cdot\nabla_x\psi+E_f\cdot\nabla_v\psi\big)\,f\di x\di v\di t
=
-\int_{\R^{2d}}
\psi(0,\cdot)\,f_0\di x\di v
\end{equation*}
for any $\psi\in C^\infty_c([0,T)\times\R^{2d})$. 
\end{definition}
 
Due to the structure of the system~\eqref{eq:vp}, one is tempted to look for weak solutions $f\in\class^\Theta([0,T])$  transported along the flow of the vector field $b_f\colon[0,T]\times\R^{2d}\to\R^{2d}$,
\begin{equation}
\label{eq:trans_vec}
b_f(t,x,v)=(v,E_f(t,x))
\quad
\text{for}\
t\in[0,T],\ x,v\in\R^d.
\end{equation} 
The Cauchy problem corresponding to the vector field $b_f$ in~\eqref{eq:trans_vec} is in fact a second-order ODE that can be rewritten in  the form 
\begin{equation}
\label{eq:vp_ODE}
\left\{\!\!
\begin{array}{ll}
\dot X=V,
&
\text{for}\ t\in(0,T),
\\[1ex]
\dot V = E_f(t,X),
&
\text{for}\ t\in(0,T),
\\[1ex]
X(0)=x,\ V(0)=v,
\end{array}
\right.
\end{equation}
where $t\mapsto(X(t),V(t))$ is any flow line starting from the initial datum $(x,v)\in\R^{2d}$.
Since the modulus of continuity of~$b_f$ in~\eqref{eq:trans_vec} uniquely depends on~$\phi_\Theta$ in~\eqref{eq:phi_Theta}, which, in turn, only depends on the choice of~$\Theta$, here and in the rest of the paper we make the following 

\begin{assumption} \label{assumption}
The growth function $\Theta$ is such that $\phi_\Theta$ is continuous on $[0,+\infty)$.
\end{assumption}

Consequently, given a weak solution $f \in \class^\Theta([0,T])$, in virtue of \cref{res:relation_class_modulus} and Peano's Theorem, the Cauchy problem~\eqref{eq:vp_ODE} is well posed and admits a (classical) globally-defined, possibly non-unique, flow $\Gamma_f\colon[0,T]\times\R^{2d}\to\R^{2d}$.

\begin{definition}[Admissible Lagrangian weak solution]
\label{def:admissible_lagrangian}
We say that $f\in\class^\Theta([0,T])$ is an admissible \emph{Lagrangian} weak solution of the system~\eqref{eq:vp} starting from the initial datum $f_0\in L^1(\R^{2d})$ if $f$ is as in \cref{def:weak_sol_vp} and, moreover,
\begin{equation}
\label{eq:def_Lag_push}
f(t,\cdot)=(\Gamma_f(t,\cdot))_\#f_0
\quad
\text{for all}\ t\ge0,
\end{equation}
where $\Gamma_f$ is any flow solving the Cauchy problem~\eqref{eq:vp_ODE}.
\end{definition}
 
A natural way to ensure the well-posedness of the ODE in~\eqref{eq:vp_ODE} is to impose the \emph{Osgood condition} on the modulus of (spatial) continuity of~$b_f$ in~\eqref{eq:trans_vec}.
However, due to the special second-order structure of~\eqref{eq:vp_ODE}, such condition can be considerably relaxed.

\begin{theorem}[ODE well-posedness]
\label{res:vp_ODE}
Under \cref{assumption}, problem~\eqref{eq:vp_ODE} admits a globally-defined classical solution.
Moreover, if $\Phi_\Theta\colon[0,+\infty)\to[0,+\infty)$, given by
\begin{equation}
\label{eq:Phi_Theta}
\Phi_\Theta (r) 
= 
\int_0^r \phi_\Theta (s)  \di s
\quad
\text{for all}\ r\ge0,
\end{equation}  
satisfies
\begin{equation}
\label{eq:Phi_Theta_osgood}
\int_{0^+}\frac{\di r}{\sqrt{\Phi_\Theta(r)}}=+\infty,
\end{equation}
then the solution of problem~\eqref{eq:cauchy_problem} is unique and the induced flow is a measure-preserving homeomorphism on~$\R^{2d}$ at each time. 
\end{theorem}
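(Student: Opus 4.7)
By \cref{res:relation_class_modulus} and \cref{assumption} the vector field $b_f(t,x,v)=(v,E_f(t,x))$ is continuous in $(x,v)$ and satisfies $|b_f(t,x,v)|\le |v|+\|E_f\|_{L^\infty([0,T]\times\R^d)}$. Peano's theorem, together with this at-most-linear growth in $(x,v)$ (which excludes blow-up on any finite time interval), will yield a globally-defined classical solution of~\eqref{eq:vp_ODE} for every $(x,v)\in\R^{2d}$.

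\textbf{Uniqueness via a second-order Osgood scheme.} Given two solutions $(X_i,V_i)$, $i=1,2$, with the same initial datum, I would set $A(t):=|X_1(t)-X_2(t)|$ and $B(t):=|V_1(t)-V_2(t)|$. By \cref{res:relation_class_modulus}, $A$ and $B$ are Lipschitz with
\begin{equation*}
A(0)=B(0)=0, \qquad \dot A\le B, \qquad \dot B\le C\,\phi_\Theta(A) \quad \text{a.e.\ on }[0,T],
\end{equation*}
where $C:=\|E_f\|_{L^\infty([0,T];C^{0,\phi_\Theta}_b)}$. The key step is to compare $(A,B)$ with the smooth global solution $(a_\e,b_\e)$ of the perturbed majorant system
\begin{equation*}
\dot a_\e=b_\e+\e, \qquad \dot b_\e=C\,\phi_\Theta(a_\e)+\e, \qquad a_\e(0)=b_\e(0)=\e,
\end{equation*}
which, exploiting the monotonicity of $\phi_\Theta$ near the origin, satisfies $A<a_\e$ and $B<b_\e$ on $[0,T]$ by the usual propagation-of-strict-inequality argument. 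Since $\ddot a_\e=\dot b_\e=C\phi_\Theta(a_\e)+\e$, multiplying by $\dot a_\e>0$ and integrating in time will give the energy identity
\begin{equation*}
\dot a_\e(t)^2 = 4\e^2 + 2C\,\big(\Phi_\Theta(a_\e(t))-\Phi_\Theta(\e)\big) + 2\e\,\big(a_\e(t)-\e\big),
\end{equation*}
and separation of variables then yields
\begin{equation*}
\int_\e^{a_\e(t)}\frac{dr}{\sqrt{4\e^2+2C\,\Phi_\Theta(r)+2\e r}}\le t.
\end{equation*}
If $A(t_0)>0$ for some $t_0\in(0,T]$ then $a_\e(t_0)>A(t_0)=:L>0$ for every $\e$, and letting $\e\to 0$ contradicts~\eqref{eq:Phi_Theta_osgood}. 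Therefore $A\equiv B\equiv 0$, which delivers uniqueness.

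\textbf{Flow, homeomorphism, and measure preservation.} Uniqueness renders $\Gamma_f(t,\cdot)\colon\R^{2d}\to\R^{2d}$ well-defined; classical continuous dependence of ODE solutions under uniqueness makes it continuous, and since~\eqref{eq:vp_ODE} is reversible in time the backward flow supplies a continuous inverse, so $\Gamma_f(t,\cdot)$ is a homeomorphism. For measure preservation I would use that $b_f$ is divergence-free, $\divergence_{x,v}b_f=\divergence_x v+\divergence_v E_f(t,x)=0$. Mollifying $E_f$ in $x$ produces smooth divergence-free approximations whose flows preserve the Lebesgue measure by the classical Liouville theorem; a regularization-uniform variant of the second-order Osgood estimate above gives locally uniform convergence of the approximate flows to $\Gamma_f$, and the pushforward of the Lebesgue measure passes to the limit.

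\textbf{Main obstacle.} The crux is that $\phi_\Theta$ does not in general satisfy the classical first-order Osgood condition $\int_{0^+}dr/\phi_\Theta(r)=+\infty$, already failing for $\Theta(p)=p$. The second-order nature of~\eqref{eq:vp_ODE} must therefore be decisively exploited to gain one additional integration, replacing $\phi_\Theta$ by $\sqrt{\Phi_\Theta}$ in the divergence condition~\eqref{eq:Phi_Theta_osgood}. This gain is captured exactly by the energy first integral of the majorant system, while the $\e$-perturbation serves only to convert the differential inequalities for $(A,B)$ into honest ODEs on which separation of variables is rigorous.
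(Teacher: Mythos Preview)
Your proposal is correct. Existence and the measure-preservation step match the paper's route (Peano plus linear growth; Liouville for mollified divergence-free approximants, then pass to the limit via a stability estimate that is uniform along the regularization).

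For uniqueness, however, you take a genuinely different path. The paper does not introduce a perturbed majorant ODE. Instead it defines a single scalar majorant $u(t)$ (the right-hand side of the integral inequality for $|X_1-X_2|$), checks that $u,u'\ge 0$ and that $u$ satisfies the second-order differential inequality $u''\le c\,u'+\phi_\Theta(u)$ a.e., and then proves a dedicated second-order Gr\"onwall lemma: multiplying by $u'$ and applying the ordinary Gr\"onwall inequality to $(u')^2$ yields $(u')^2\le e^{2ct}\big(\delta^2+2\Phi_\Theta(u)\big)$, from which one separates variables through the function $\Psi_\delta(r)=\int_0^r(\delta+\sqrt{2\Phi_\Theta})^{-1}$ and finally sends $\delta\to 0$. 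Your approach reaches the same integral $\int \! dr/\sqrt{\Phi_\Theta(r)}$ via the energy first integral of the $\e$-perturbed cooperative system and strict comparison, then lets $\e\to 0$. Both arguments exploit the second-order structure to trade $\phi_\Theta$ for $\sqrt{\Phi_\Theta}$ in the Osgood condition; your route is a clean, self-contained uniqueness proof, whereas the paper's Gr\"onwall formulation simultaneously delivers the \emph{quantitative} two-trajectory stability estimate (their ODE stability proposition), which is then reused verbatim in the Wasserstein Lagrangian stability theorem. One small caveat: your comparison step needs $\phi_\Theta$ non-decreasing; the paper's argument uses the same monotonicity implicitly when bounding $\phi_\Theta(|X_1-X_2|)\le \phi_\Theta(u)$, so neither proof avoids it.
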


Assumption~\eqref{eq:Phi_Theta_osgood} imposes the Osgood condition on $\sqrt{\Phi_\Theta}$ and can be seen as a second-order-type Osgood condition on~$\phi_\Theta$. 
Indeed, taking $d=1$, $X(0)=V(0)=0$ and $E_f(t,x)=\phi_\Theta(x)$ in~\eqref{eq:vp_ODE} for simplicity, we observe that 
\begin{equation*}
\frac{\di}{\di t} \frac{\dot X^2}2=\phi_\Theta(X)\,\dot X
\quad
\text{for}\ t\in(0,T),
\end{equation*}
so that, by integrating and changing variables, we get  
\begin{equation}
\label{eq:X2_unique}
\dot X^2(t)
=
2\int_0^t\phi_\Theta(X(s))\,\dot X(s)\di s
=
2\Phi_\Theta(X(t))
\quad
\text{for all}\ t\in(0,T).
\end{equation} 
Hence uniqueness of solutions of the ODE~\eqref{eq:vp_ODE} should follow as soon as
\begin{equation*}
\int_{0+}\frac{\dot X(t)\di t}{\sqrt{\Phi_\Theta(X(t))}}
=
\int_{0+}\frac{\di r}{\sqrt{\Phi_\Theta(r)}}
=+\infty,
\end{equation*}
leading to~\eqref{eq:Phi_Theta_osgood}. 
Note that~\eqref{eq:X2_unique} involves the (square of the) velocity $V=\dot X$ of the trajectory, besides its position~$X$, since in fact~$X$ solves a second-order ODE, namely, $\ddot X=E_f(t,X)$.
This explains why~\eqref{eq:Phi_Theta_osgood} should be seen as a second-order Osgood condition on the modulus of continuity of the vector field~$E_f$.

\subsection{Lagrangian stability}

Our first main result exploits the ODE well-posedness in \cref{res:vp_ODE} to provide stability of admissible Lagrangian weak solutions of the Vlasov--Poisson system~\eqref{eq:vp}, see \cref{res:main_lagrangian} below, generalizing~\cite{M16}*{Th.~1.1} and~\cite{HM18}*{Th.~1.9}.

Due to the physical meaning of the problem~\eqref{eq:vp} when $d=3$, we restrict our attention to non-negative densities $f\ge0$ and, up to (non-linearly) rescaling all estimates, we shall work with probability densities.
More precisely, we operate within the space of \emph{probability measures with finite $1$-moment} on $\R^{2d}$,  
\begin{equation*}
\prob_1(\R^{2d})
=
\set*{\mu\in\prob(\R^{2d}) : \int_{\R^{2d}}|p|\di\mu(p)<+\infty}.
\end{equation*}
Such space can be naturally endowed with the \emph{$1$-Wasserstein distance}, given by 
\begin{equation}
\label{eq:def_wass}
\wass_1(\mu_1,\mu_2)
=
\inf\set*{\int_{\R^{2d}\times\R^{2d}}|p-q|\di\pi(p,q) : \pi\in\plan(\mu_1,\mu_2)}
\end{equation}
for $\mu_1,\mu_2\in\prob_1(\R^{2d})$.
Here  
\begin{equation*}
\plan(\mu_1,\mu_2)
=
\set*{\pi\in\prob\big(\R^{2d}\times\R^{2d}\big) : (\proj_i)_\#\pi=\mu_i,\ i=1,2}
\end{equation*}
denotes the set of \emph{plans} (or \emph{couplings}) between~$\mu_1$ and~$\mu_2$, where $\proj_i\colon\R^{2d}\times\R^{2d}\to\R^{2d}$ is the projection on the $i$-th component.
As well-known~\cite{AGS08}, there exist \emph{optimal} plans $\pi\in\plan(\mu_1,\mu_2)$, i.e., plans attaining the infimum in~\eqref{eq:def_wass}, and the resulting \emph{$1$-Wasserstein space} $(\prob_1(\R^{2d}),\wass_1)$ is a complete and separable metric space.

\begin{theorem}[Lagrangian stability]
\label{res:main_lagrangian}
Assume that $\phi_\Theta$ is concave on $[0,+\infty)$ and $\Phi_\Theta$ satisfies~\eqref{eq:Phi_Theta_osgood}.
There is $\Omega_{\Theta,T}\colon[0,+\infty)\to[0,+\infty)$ continuous, with $\Omega_{\Theta,T}(0)=0$, satisfying the following property.
Let $i=1,2$ and let $f_i\in\class^\Theta([0,T])$ be a Lagrangian weak solution of the Vlasov--Poisson system~\eqref{eq:vp} starting from the initial datum $f_0^i\in L^1(\R^{2d})$.
If $\mu_0^i=f_0^i\,\mathscr L^{2d}\in\prob_1(\R^{2d})$, then also $\mu_i(t,\cdot)=f_i(t,\cdot)\,\mathscr L^{2d}\in\prob_1(\R^{2d})$ for all $t\in[0,T]$ and
\begin{equation*}
\sup_{t\in[0,T]}\wass_1(\mu_1(t,\cdot),\mu_2(t,\cdot))
\le 
\Omega_{\Theta,T}\big(
\wass_1(\mu_0^1,\mu_0^2\bigskip)
\big).
\end{equation*} 
In particular, if $f_0^1=f_0^2$, then also $f_1(t,\cdot)=f_2(t,\cdot)$ for all $t\in[0,T]$.
\end{theorem}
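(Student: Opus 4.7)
My plan is to follow the Lagrangian coupling strategy of Loeper, Miot and Holding--Miot, combined with a second-order Gronwall argument tailored to the modulus $\phi_\Theta$. First, I would fix an optimal plan $\pi_0 \in \plan(\mu_0^1, \mu_0^2)$ for $\wass_1$, and write $\Gamma_{f_i}(t, z) = (X_i(t, z), V_i(t, z))$ for any flow associated with $f_i$ as provided by \cref{res:vp_ODE} and \cref{def:admissible_lagrangian}. Since $\mu_i(t) = \Gamma_{f_i}(t)_\# \mu_0^i$, the push-forward $\pi_t := (\Gamma_{f_1}(t), \Gamma_{f_2}(t))_\# \pi_0$ is a (not necessarily optimal) coupling of $\mu_1(t)$ with $\mu_2(t)$. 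Setting
\[
D_X(t) := \int \abs{X_1(t, z_1) - X_2(t, z_2)}\, d\pi_0, \qquad D_V(t) := \int \abs{V_1(t, z_1) - V_2(t, z_2)}\, d\pi_0,
\]
one has $\wass_1(\mu_1(t), \mu_2(t)) \le D_X(t) + D_V(t)$, while $D_X(0) + D_V(0) \le \sqrt 2\, \wass_1(\mu_0^1, \mu_0^2)$ by optimality of $\pi_0$.

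Integrating \eqref{eq:vp_ODE} in time yields at once $D_X(t) \le D_X(0) + \int_0^t D_V(s)\, ds$. To control $D_V$, I would decompose
\[
\abs{E_{f_1}(X_1) - E_{f_2}(X_2)} \le \abs{E_{f_1}(X_1) - E_{f_1}(X_2)} + \abs{E_{f_1}(X_2) - E_{f_2}(X_2)}.
\]
The first summand is pointwise bounded by $C\, \phi_\Theta(\abs{X_1 - X_2})$ via \cref{res:relation_class_modulus}. For the second, the Lagrangian representation
\[
E_{f_1}(x) - E_{f_2}(x) = \kappa \int \bigl[K(x - X_1(t, z_1')) - K(x - X_2(t, z_2'))\bigr]\, d\pi_0(z_1', z_2'),
\]
valid because $\rho_{f_i}(t) = (X_i(t))_\# \mu_0^i$, combined with Fubini (after integrating in $x$ against $d\rho_{f_2}(t, x)$, which is exactly the distribution of $X_2(t, \cdot)$ that enters $D_V$), reduces the estimate to the pointwise bound
\[
\int \abs{K(x - y_1) - K(x - y_2)}\, d\rho_{f_2}(t, x) \le C\, \phi_\Theta(\abs{y_1 - y_2}) \qquad \forall\, y_1, y_2 \in \R^d,
\]
which follows from the same near/far splitting and $p$-optimization used to prove \cref{res:relation_class_modulus}, since $\rho_{f_2}(t, \cdot) \in Y^\Theta_{\ul}(\R^d)$. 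Concavity of $\phi_\Theta$ and Jensen's inequality then deliver the matching integral bound $D_V(t) \le D_V(0) + C \int_0^t \phi_\Theta(D_X(s))\, ds$.

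The resulting coupled system is the Lagrangian incarnation of the scalar second-order ODE $\ddot a = C\phi_\Theta(a)$. I would invoke a comparison principle --- proved by considering, for $\e > 0$, the strict majorant $\dot{\tilde a}_\e = \tilde b_\e + \e$, $\dot{\tilde b}_\e = C\phi_\Theta(\tilde a_\e) + \e$ with data $(D_X(0), D_V(0))$, arguing by contradiction at a first crossing time, and then letting $\e \downarrow 0$ --- to conclude that $D_X(t)$ and $D_V(t)$ are dominated respectively by $\tilde a(t)$ and $\dot{\tilde a}(t)$, where $\tilde a$ solves $\ddot{\tilde a} = C\phi_\Theta(\tilde a)$ with the same initial data. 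The first-integral identity reminiscent of~\eqref{eq:X2_unique} gives the implicit formula
\[
\int_{\tilde a(0)}^{\tilde a(t)} \frac{dr}{\sqrt{\dot{\tilde a}(0)^2 + 2C\bigl(\Phi_\Theta(r) - \Phi_\Theta(\tilde a(0))\bigr)}} = t,
\]
and the Osgood assumption~\eqref{eq:Phi_Theta_osgood} is precisely what guarantees $\sup_{t \in [0,T]} \tilde a(t) \to 0$ as $(\tilde a(0), \dot{\tilde a}(0)) \to (0, 0)$. Packaging this dependence into a continuous modulus $\Omega_{\Theta, T}$ yields the stated stability bound; the uniqueness statement follows by specializing to $\mu_0^1 = \mu_0^2$.

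The main obstacle is the Lagrangian kernel estimate for the ``same point, different fields'' term, where the singularity of $K$ sits at two distinct but close points $y_1, y_2$ rather than a single one: one must show that the uniformly-localized Yudovich bound on $\rho_{f_2}$ suffices to absorb the near-field contribution via the same $p$-optimization employed in \cref{res:relation_class_modulus}, while the far-field contribution is handled by the Lipschitz behaviour of $K$ away from $0$. A secondary point is that the required continuous dependence of the majorizing ODE on its initial data at $(0, 0)$ --- needed for $\Omega_{\Theta, T}$ to be continuous with $\Omega_{\Theta, T}(0) = 0$ --- is itself a direct consequence of~\eqref{eq:Phi_Theta_osgood} applied to the implicit formula above.
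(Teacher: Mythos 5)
Your proposal follows essentially the same route as the paper: an optimal plan pushed forward along the two flows, the same splitting of the field difference with the ``same point, different fields'' term handled by the Lagrangian representation of $E_{f_i}$ and the kernel oscillation estimate of \cref{res:mapping_kernel} (which is exactly the bound you flag as the main obstacle, and is already established there by the near/far splitting and $p$-optimization you describe), Jensen's inequality with concavity of $\phi_\Theta$, and a second-order Gr\"onwall/Osgood closure in which \eqref{eq:Phi_Theta_osgood} gives continuity of the modulus at the origin. The only cosmetic difference is that the paper integrates the resulting differential inequality via an explicit second-order Gr\"onwall lemma (\cref{res:gronwall-2}) applied to the majorant function defined by the right-hand side of the integral inequality, rather than via a comparison principle with a strict majorant and a first-crossing argument; both are valid and yield the same $\Psi_{\delta,c}^{-1}$-type bound.
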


The function $\Omega_{\Theta,T}$ appearing in \cref{res:main_lagrangian} can be actually made more explicit and, basically, it depends on the inverse of the function $\Psi_{\Theta,\delta,c}\colon[0,+\infty)\to[0,+\infty)$,
\begin{equation*}
\Psi_{\Theta,\delta,c}(t)
=
\int_0^t\frac{\di s}{\delta+\sqrt{2c\,\Phi_\Theta(s)}}
\quad
\text{for all}\ t\ge0,
\end{equation*}
for suitably chosen parameters $\delta,c>0$.

The proof of \cref{res:main_lagrangian} follows the elementary strategy introduced in~\cite{CS21} for the well-posedness of $2$-dimensional Euler's equations (we also refer to recent applications of this method to \emph{transport--Stokes equations}~\cite{I23} and to systems of \emph{non-local} continuity equations~\cite{IS23}).
Basically, to control the distance between two Lagrangian weak solutions of the system~\eqref{eq:vp} in $\class^\Theta([0,T])$, in view of~\eqref{eq:def_Lag_push}, we just need to control the time evolution of the distance between the initial data along the flows of the corresponding Cauchy problem~\eqref{eq:vp_ODE} via a Gr\"onwall-type argument, exploiting both the stability of trajectories solving the associated ODE~\eqref{eq:vp_ODE} given by \cref{res:vp_ODE} and the modulus of continuity of the vector field provided by \cref{res:relation_class_modulus}.

Actually, our approach is more general and in fact provides stability of admissible Lagrangian weak solutions for a large family of system like~\eqref{eq:vp}.
More precisely, we can deal with \emph{generalized Vlasov--Poisson equations} of the form
\begin{equation} 
\label{eq:vp_gen_intro}
\left\{\!\!
\begin{array}{ll}
\partial_t f + F\cdot \nabla_x f + E_f\cdot\nabla_v f = 0 
& \text{in}\ (0,T) \times \R^{2d},
\\[2ex]
E_f(t,x)=\displaystyle\int_{\R^d}K(x,y)\,\rho_f(t,y)\di y
&
\text{for}\ t\in[0,T],\ x\in\R^d,
\\[2ex]
\rho_f(t,x)=\displaystyle\int_{\R^d}f(t,x,v)\di v 
&
\text{for}\ t\in[0,T],\ x\in\R^d,
\\[2ex]
f(0,\cdot) = f_0
& \text{on}\ \R^{2d},
\end{array}
\right.
\end{equation}
where $F\in L^\infty([0,T];C(\R^{2d};\R^d))$ satisfies
\begin{equation*}
\underset{{t\in[0,T]}}\esssup
\,
\abs*{F(t,x,v)- F(t,y,w)} \leq 
L\left[\abs{x-y}+\abs{v-w}\right] 
\quad
\text{for all}\
x,y,v,w\in\R^d 
\end{equation*}
for some $L\ge0$, and $K\colon\R^{2d}\to\R^d$ is any sufficiently well-behaved antisymmetric kernel.

The choice $F(t,x,v)=\frac{v}{\sqrt{1+|v|^2}}$ for $t\in[0,T]$ and $x,v\in\R^d$ in~\eqref{eq:vp_gen_intro} corresponds to the \emph{relativistic} Vlasov--Poisson equations.
The well-posedness theory in the relativistic framework is less understood.
For $d=3$ and only in the attractive case, global existence of solutions has been established in~\cites{GS85,GS01,HR07,KT08,W20} for radially symmetric initial data.
For both the attractive and the repulsive case, well-posedness---global for $d=2$ and only local for $d=3$---and propagation of regularity for general initial data have been recently obtained in~\cite{LS22} via propagation of velocity moments.

\subsection{Existence of Lagrangian solutions}

Our second main result provides existence of admissible Lagrangian weak solutions of the Vlasov--Poisson system~\eqref{eq:vp}, generalizing the constructions in~\cite{M16}*{Ths.~1.2 and~1.3} and~\cite{HM18}*{Prop.~1.14}.

\begin{theorem}[Existence]
\label{res:main_existence}
Let $d=2,3$.
Let $\theta\in Y^\Theta(\R^d)$ be such that 
\begin{equation}
\label{eq:theta_existence}
\theta\not\equiv0,
\quad
\theta\ge0
\quad
\text{and}
\quad
\int_{\R^d}(1\vee|x|)\,\theta(x)\di x<+\infty,
\end{equation} 
There exists a Lagrangian weak solution 
$f\in\class^\Theta([0,T])$ of the Vlasov--Poisson system~\eqref{eq:vp}, starting from the initial datum 
$$ f_0(x,v)=\frac{\mathbf 1_{(-\infty,0]}\left(|v|^2-\theta(x)^{\frac2d}\right)}{|B_1|\,\|\theta\|_{L^1}},
\quad
\text{for}\
x,v\in\R^d, $$
such that $f(t,\cdot)\,\mathscr L^{2d}\in\prob_1(\R^{2d})$ for all $t\in[0,T]$ and
\begin{equation*}
C\,\|\theta\|_{L^p}
\le
\|\rho_f\|_{L^\infty([0,T];L^p)}
\le 
C_T\|\theta\|_{L^p}
\quad
\text{for all}\ p\in\left[1,+\infty\right),
\end{equation*}
for some constants $C,C_T>0$, where $C_T$ depends on $T$.
\end{theorem}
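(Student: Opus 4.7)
The plan is to construct $f$ by approximation. First I would replace $\theta$ by a bounded, compactly supported truncation $\theta_n$ (for instance $\theta_n=(\theta\wedge n)\,\mathbf 1_{B_n}$, renormalized so that $\|\theta_n\|_{L^1}\to\|\theta\|_{L^1}$) and consider the associated waterbag initial data $f_0^n(x,v)=c_n\,\mathbf 1_{\{|v|^2\le\theta_n(x)^{2/d}\}}$. Each $f_0^n$ is bounded, compactly supported and has finite moments of all orders, so the classical Lagrangian theory (Pfaffelmoser and Lions--Perthame for $d=3$, Okabe--Ukai for $d=2$) yields a unique global smooth solution $f_n$ of~\eqref{eq:vp}. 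Since the transport field $(v,E_{f_n})$ is divergence-free, the flow $\Gamma_n$ preserves phase-space Lebesgue measure and the waterbag structure is propagated in the form $f_n(t,\cdot)=c_n\,\mathbf 1_{A_n(t)}$ with $A_n(t)=\Gamma_n(t)(A_0^n)$, so that $\rho_{f_n}(t,y)=c_n\,|A_n(t)_y|$.

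The heart of the argument is a uniform bound $\|\rho_{f_n}\|_{L^\infty([0,T];Y^\Theta_{\ul})}\le C_T$ that is independent of~$n$. I would prove it through a continuity bootstrap. Suppose the bound holds with constant~$M$ on a subinterval $[0,T^\ast]$: combining \cref{res:relation_class_modulus} with the conserved mass $\|\rho_{f_n}(t)\|_{L^1}=1$, one derives a uniform estimate $\|E_{f_n}(t)\|_{\infty}\le\kappa(M)$ on $[0,T^\ast]$. Integrating the characteristic ODE $\dot V=E_{f_n}(t,X)$ then yields the velocity-shift inequality $|V_n(t,x_0,v_0)-v_0|\le T^\ast\kappa(M)=:R_{T^\ast}$, so the velocity fibre of $A_n(t)$ above any $y\in\R^d$ is contained in a ball of radius $\theta_n(x_0)^{1/d}+R_{T^\ast}$ centred at the correct Lagrangian velocity. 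Passing this pointwise information through the area formula, and using that by \cref{res:vp_ODE} the position flow is a quasi-Lipschitz homeomorphism with a controlled Osgood modulus, yields the key estimate
\begin{equation*}
\|\rho_{f_n}(t)\|_{L^p_{\ul}}
\le C_{T}\,\big(\|\theta\|_{L^p_{\ul}}+R_{T^\ast}^{\,d}\big)
\le C_T\,(1+\kappa(M))\,\Theta(p),
\end{equation*}
uniformly in $p\ge 1$. Dividing by $\Theta(p)$ and choosing $T^\ast$ small enough (depending on $M$) closes the bootstrap on $[0,T^\ast]$, and a finite iteration covers the whole of $[0,T]$. The matching lower bound for $\|\rho_{f_n}(t)\|_{L^p}$ comes from the same analysis applied from below: the waterbag fibres at time~$t$ also contain a ball of radius $(\theta_n(x_0)^{1/d}-R_{T^\ast})_+$, and the controlled Jacobian of the inverse flow transfers $L^p$-mass back to~$\theta_n$.

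With the uniform estimates in hand, compactness yields a subsequence along which $\rho_{f_n}\to\rho$ in $C([0,T];L^1_{\loc})$ and $E_{f_n}\to E$ locally uniformly, with $E(t,\cdot)\in C^{0,\phi_\Theta}_b$ by the uniform modulus provided by \cref{res:relation_class_modulus}. The Osgood well-posedness of~\eqref{eq:vp_ODE} given by \cref{res:vp_ODE} ensures that the flows $\Gamma_n$ converge locally uniformly to the flow $\Gamma$ associated with $E$, and passing to the limit in the Lagrangian identity $f_n(t)=(\Gamma_n(t))_\#f_0^n$ produces a weak Lagrangian solution $f(t)=\Gamma(t)_\#f_0$ lying in $\class^\Theta([0,T])$, as in \cref{def:weak_sol_vp}. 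The $L^p$ bounds stated in the theorem are inherited from the uniform-in-$n$ estimates of the previous step, and the finite first moment of $f(t)$ follows from the growth of the characteristics controlled by $\|E\|_\infty$.

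The main obstacle is the bootstrap of the second paragraph: the nonlinear feedback through $\kappa(M)$ in the velocity-shift radius $R_{T^\ast}$ must be balanced against the prescribed growth $\Theta(p)$ of the Yudovich norm, and the standard $L^\infty$-based estimate on $f_n$ is useless because $c_n\to+\infty$ in general. This is precisely where the real-variable techniques developed in~\cite{CS21} for the Euler equations, combined with the specific waterbag structure of the initial datum, become essential.
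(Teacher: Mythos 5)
Your approximation-plus-bootstrap scheme is a genuinely different route from the paper's, but it rests on a misconception about the initial datum and leaves its central estimate unproved. The waterbag $f_0$ is \emph{already} bounded: its height is the constant $1/(|B_1|\,\|\theta\|_{L^1})$, and the unboundedness of $\theta$ is stored entirely in the size of the velocity support, not in the value of $f_0$. Likewise your $c_n=1/(|B_1|\,\|\theta_n\|_{L^1})$ converges to a finite limit, so the claim that ``$c_n\to+\infty$ in general'' and that the $L^\infty$-based estimates are therefore useless is false. Moreover, by \cref{res:initial_datum} the velocity moments of $f_0$ satisfy $\int|v|^p f_0\le \|\theta\|_{L^{p/d+1}}^{p/d+1}/\|\theta\|_{L^1}<+\infty$ for every $p$, since $\theta\in Y^\Theta(\R^d)\subset\bigcap_p L^p(\R^d)$. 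Hence \eqref{eq:m-moment-vel} holds for all $m$ and the Lions--Perthame theorem \cite{LP91}*{Th.~1} applies \emph{directly} to $f_0$, with no truncation and no compactness argument; this is what the paper does in \cref{res:lions-perthame}. The upper bound $\|\rho_f\|_{L^\infty([0,T];L^{p/d+1})}\le C_T\|\theta\|_{L^{p/d+1}}$ then follows from the interpolation inequality $\|\rho_f(t)\|_{L^{(p+d)/d}}\lesssim M_p(t)^{d/(p+d)}$ of \cite{LP91} together with a Gr\"onwall bound on the velocity moments $M_p(t)$, and the lower bound is immediate from continuity at $t=0$, where $\rho_f(0,\cdot)=\theta/\|\theta\|_{L^1}$.

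Even taken on its own terms, your bootstrap has a gap. The inequality $\|\rho_{f_n}(t)\|_{L^p_{\ul}}\le C_T\big(\|\theta\|_{L^p_{\ul}}+R_{T^*}^{\,d}\big)$ is asserted, not derived: for a fixed Eulerian point $y$ the velocities $v$ in the fibre of $A_n(t)$ correspond to backward characteristics landing at \emph{different} base points $x_0=X_n(0;t,y,v)$, so the fibre is not contained in a single ball of radius $\theta_n(x_0)^{1/d}+R_{T^*}$; one only gets $\rho_{f_n}(t,y)\lesssim \sup_{x\in B_{Ct}(y)}\theta_n(x)+R_{T^*}^{\,d}$, and converting this supremum into an $L^p_{\ul}$ bound with constants uniform in $p$ (as the Yudovich norm requires) needs a maximal-function argument you do not supply. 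The proposed ``matching lower bound'' is both incomplete (it fails where $\theta_n^{1/d}<R_{T^*}$) and unnecessary, since the statement only bounds $\sup_{t}\|\rho_f(t,\cdot)\|_{L^p}$ from below and the time $t=0$ suffices. Finally, your limit passage invokes uniqueness and convergence of the flows, which in this paper require the Osgood condition \eqref{eq:Phi_Theta_osgood}; that condition is \emph{not} among the hypotheses of the theorem, whereas the direct argument via \cite{LP91} never needs it.
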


The construction behind \cref{res:main_existence} builds upon the proofs of~\cite{M16}*{Ths.~1.2 and~1.3} and essentially applies the existence result proved in~\cite{LP91}*{Th.~1} to a suitable initial datum depending on the chosen function $\theta\in Y^\Theta(\R^d)$.

Note that any (non-zero) non-negative bounded and compactly supported function satisfies~\eqref{eq:theta_existence}.
Hence \cref{res:main_existence} becomes truly interesting if $\theta$ also satisfies
\begin{equation}
\label{eq:saturation}
\inf_{p\ge1}\frac{\|\theta\|_{L^p}}{\Theta(p)}>0,
\end{equation}
that is, the $L^p$ norm of $\theta$ grows as fast as $\Theta$.
In view of \cref{res:main_lagrangian}, we may restrict our attention only to growth functions $\Theta$ for which $\phi_\Theta$ is concave and condition~\eqref{eq:Phi_Theta_osgood} is met.
This is in fact the case for a countable family of growth functions of iterated-logarithmic type defined as follows.
For each $m\in\mathbb N$, we let $\Theta_m\colon[0,+\infty)\to[0,+\infty)$ be given by
\begin{equation*} 
\Theta_m(p)
=
\left\{\!\!
\begin{array}{ll}
p\,\abs{\log_1(p)}^2\, \abs{\log_2(p)}^2 \cdots \abs{\log_m(p)}^2 
&
\text{for}\ 
p\ge\exp_m(1),
\\[2ex]
\Theta_m(\exp_m(1))
&
\text{for}\ 
p\in[0,\exp_m(1)],
\end{array}
\right.
\end{equation*}
where $\exp_0(1)=1$ and $\exp_{m+1}(1)=e^{\exp_{m}(1)}$ recursively, and
\begin{equation}
\label{eq:logol}
\log_m=
\left\{\!\!
\begin{array}{ll}
\mathrm{id}
&
\text{for}\ m=0
\\[1ex]
\underbrace{\log\log\cdots\log}_{(m-1)\ \text{times}}|\log|
&
\text{for}\ 
m\ge1.
\end{array}
\right.
\end{equation}

\begin{proposition}[Saturation of $\Theta_m$]
\label{res:saturation}
For each $m\in\mathbb N_0$, $\phi_{\Theta_m}$ is concave, $\Phi_{\Theta_m}$ satisfies~\eqref{eq:Phi_Theta_osgood} and there is $\theta_m\in Y^{\Theta_m}(\R^d)$ with compact support satisfying~\eqref{eq:theta_existence} and~\eqref{eq:saturation}. 
\end{proposition}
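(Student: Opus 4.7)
The plan is to verify the three assertions of the proposition separately: concavity of $\phi_{\Theta_m}$, the second-order Osgood condition~\eqref{eq:Phi_Theta_osgood} for $\Phi_{\Theta_m}$, and the existence of a compactly supported $\theta_m$ saturating the Yudovich growth.

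\emph{Concavity.} On the branch where $\phi_{\Theta_m}(r) = r|\log r|\Theta_m(|\log r|) = r\,g_m(|\log r|)$ with $g_m(s) := s\Theta_m(s) = s^2\prod_{k=1}^m |\log_k s|^2$, a direct differentiation gives
\[
\phi_{\Theta_m}''(r) = \frac{g_m''(|\log r|) - g_m'(|\log r|)}{r}.
\]
A logarithmic-differentiation argument yields $g_m'(s) \sim 2g_m(s)/s$ and $g_m''(s) \sim 2g_m(s)/s^2$ as $s\to+\infty$, so $g_m''(s)/g_m'(s) \sim 1/s \to 0$. Hence $\phi_{\Theta_m}'' \le 0$ on a right neighborhood $(0, r_\ast)$ of $0$; exploiting the freedom in the gluing threshold noted just after~\eqref{eq:phi_Theta}, I would redefine the constant extension to start at $r_\ast$, making $\phi_{\Theta_m}$ globally concave.

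\emph{Osgood.} From the explicit form $\phi_{\Theta_m}(r) = r|\log r|^2\prod_{k=1}^m|\log_k(|\log r|)|^2$ near $r=0$, Karamata's theorem on slowly-varying functions (or direct integration by parts) gives $\Phi_{\Theta_m}(r) \sim \tfrac12\,r\,\phi_{\Theta_m}(r)$ as $r \to 0^+$. The substitution $u = |\log r|$ then reduces $\int_{0^+} \di r/\sqrt{\Phi_{\Theta_m}(r)}$ to a constant multiple of the classical iterated-logarithmic Abel integral
\[
\int^{+\infty} \frac{\di u}{u \prod_{k=1}^m \log_k u},
\]
whose antiderivative is $\log_{m+1}(u) \to +\infty$; hence~\eqref{eq:Phi_Theta_osgood} holds.

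\emph{Saturation.} I would define
\[
\theta_m(x) := \Theta_m(|\log|x||)\,\mathbf 1_{B_{r_0}}(x), \qquad x \in \R^d,
\]
for some $r_0 \in (0, 1)$ small enough that $|\log|x|| \ge \exp_m(1)$ on $\overline{B_{r_0}}$; non-negativity, non-triviality, compact support and the weighted $L^1$-integrability in~\eqref{eq:theta_existence} are then immediate. Passing to spherical coordinates and substituting $t = d|\log|x||$,
\[
\|\theta_m\|_{L^p}^p = \frac{|S^{d-1}|}{d} \int_T^{+\infty} \Theta_m(t/d)^p\, e^{-t}\,\di t, \qquad T := d|\log r_0|.
\]
Because $\Theta_m$ is slowly varying, the Laplace saddle of $\Theta_m(t/d)^p e^{-t}$ sits at $t^\ast \sim p$ with quadratic width $\sim \sqrt p$, and one obtains
\[
\int_T^{+\infty} \Theta_m(t/d)^p\, e^{-t}\,\di t \sim p!\,\bigl(\Theta_m(p)/(dp)\bigr)^p
\]
as $p \to +\infty$. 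A $p$-th root combined with Stirling then yields $\|\theta_m\|_{L^p} \sim \Theta_m(p)/(ed)$, which simultaneously gives $\theta_m \in Y^{\Theta_m}(\R^d)$ and the saturation bound~\eqref{eq:saturation}. The most delicate step is this Laplace-type asymptotics: raising the slowly-varying pre-factor $\Theta_m(t/d)$ to the $p$-th power tightens the effective saddle window, so a uniform control of $\log\Theta_m(t/d) - \log\Theta_m(p/d)$ on a $p^{2/3}$-neighborhood of $t^\ast$ and elementary tail estimates against $\int_0^{+\infty} t^p e^{-t}\,\di t = p!$ are both required.
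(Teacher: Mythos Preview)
Your proposal is correct, and in fact your candidate function $\theta_m(x)=\Theta_m(|\log|x||)\,\mathbf 1_{B_{r_0}}(x)$ coincides (on the overlap of the cutoffs) with the paper's choice $\theta_m(x)=\ell_1(|x|)\,\ell_2(|x|)^2\cdots\ell_{m+1}(|x|)^2$, since $\Theta_m(|\log|x||)=|\log_1|x||\cdot|\log_2|x||^2\cdots|\log_{m+1}|x||^2$. The differences are purely methodological.

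For the Osgood condition, the paper avoids Karamata altogether: since $\phi_{\Theta_m}$ is non-decreasing one has the trivial bound $\Phi_{\Theta_m}(t)\le t\,\phi_{\Theta_m}(t)$, and then $\sqrt{t\,\phi_{\Theta_m}(t)}=t\,|\log t|\,|\log_2 t|\cdots|\log_{m+1}t|$ already gives the divergent iterated-logarithm integral. This is simpler than invoking slow-variation asymptotics for $\Phi_{\Theta_m}$, and costs nothing.

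For saturation, the paper takes a more elementary route than your Laplace analysis. The lower bound is obtained by restricting the integral defining $\|\theta_m\|_{L^p}^p$ to the ball $B_{e^{-p}}$, where each $|\log_k|x||\ge\log_{k-1}p$, giving $\|\theta_m\|_{L^p}^p\ge C_d\,e^{-dp}\,\Theta_m(p)^p$ directly. The upper bound is handled by H\"older's inequality applied to the product $\ell_1\,\ell_2^2\cdots\ell_{m+1}^2$, together with a short recursive lemma (using Jensen's inequality for the concave function $r\mapsto(\log r)^p$ on $[e^{p-1},\infty)$) showing $\|\ell_k(|\cdot|)\|_{L^p}\asymp\log_{k-1}p$. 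Your Laplace approach gives sharper asymptotics ($\|\theta_m\|_{L^p}\sim\Theta_m(p)/(ed)$) in one stroke, but at the cost of the uniform-in-$p$ saddle control you flag as delicate; the paper's argument needs only finite inequalities and no asymptotic expansion, which is arguably cleaner for the two-sided bound actually required by~\eqref{eq:saturation}.
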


\cref{res:main_existence}, together with \cref{res:saturation}, yield that the class of admissible Lagrangian weak solutions considered in \cref{res:main_lagrangian} is non-empty for $d\in\set*{2,3}$ and $\Theta=\Theta_m$ for some $m\in\mathbb N_0$.
When $m=0$, our results embed the example given in the proof of~\cite{M16}*{Th.~1.3}.
Actually, the functions $\theta_m$ in \cref{res:saturation} are modeled on a well-known example due to Yudovich (see~\cite{Y95}*{Eq.~(3.7)}, \cite{T04}*{Rem.~1(i)} and the discussion around~\cite{CS21}*{Eq.~(1.12)}) concerning $2$-dimensional Euler equations in vorticity form.

\subsection{Organization of the paper}

In \cref{sec:abstract_lagrangian_stab} we provide an abstract approach to achieve the well-posedness of the Cauchy problem~\eqref{eq:vp_ODE} and the stability of admissible Lagrangian weak solutions of the system~\eqref{eq:vp}, considering the generalized Vlasov--Poisson equations~\eqref{eq:vp_gen_intro}.
We refer the reader to \cref{res:cauchy_problem} and \cref{res:lagrangian_stab_vp_gen}, respectively.
In \cref{sec:proofs}, we detail the proofs of the results presented above.

\section{Lagrangian stability for a generalized Vlasov--Poisson system}
\label{sec:abstract_lagrangian_stab}

In this section, we provide an abstract approach to obtain stability properties for Lagrangian solutions of (a generalized version of) the Vlasov--Poisson system~\eqref{eq:vp}.
Our stability result is stated in \cref{res:lagrangian_stab_vp_gen} and exploits the well-posedness of the corresponding second-order Cauchy problem provided by \cref{res:cauchy_problem}. 

\subsection{Notation}

Throughout this section, we consider
\begin{equation}
\label{eq:gronwall_phi}
\phi\in C([0,+\infty);[0,+\infty)),
\quad
\text{with}\ 
\phi(t)>0\ \text{for}\ t>0.
\end{equation}
We also let $\Phi\colon[0,+\infty)\to[0,+\infty)$ be given by
\begin{equation}
\label{eq:gronwall_Phi}
\Phi(t)=\int_0^t\phi(s)\di s
\quad
\text{for all}\ t\ge0.
\end{equation} 
Note that $\Phi$ is a non-negative and non-decreasing  $C^1$ function. 
For certain results we will also assume that $\Phi$ satisfies the additional condition
\begin{equation}
\label{eq:osgood-2}
\int_{0^+}\frac{\di t}{\sqrt{\Phi(t)}}=+\infty,
\end{equation}
i.e., the function $\sqrt{\Phi}$ satisfies the Osgood condition.
Clearly, condition~\eqref{eq:osgood-2} implies that $\phi(0)=0$. 
Given $\delta,c>0$, we also define the function $\Psi_{\delta,c}\colon[0,+\infty)\to[0,+\infty)$ by 
\begin{equation*}
\Psi_{\delta,c}(t)
=
\int_0^t\frac{\di s}{\delta+\sqrt{2c\,\Phi(s)}}
\quad
\text{for all}\ t\ge0.
\end{equation*}
To keep the notation short, we set $\Psi_{\delta}=\Psi_{\delta,1}$.
Note that $\Psi_{\delta,c}$ is a non-negative and strictly increasing $C^1$ function with bounded derivative.
In particular, $\Psi_{\delta,c}$ is invertible, with continuous and strictly-increasing inverse.
Note that, if~\eqref{eq:osgood-2} is assumed, then
\begin{equation*}
\lim_{\delta\to0^+}\Psi_{\delta,c}(t)=+\infty
\quad
\text{and}
\quad
\lim_{\delta\to0^+}\Psi_{\delta,c}^{-1}(t)=0
\quad
\text{for all}\ t,c>0.
\end{equation*}

\subsection{Second-order Gr\texorpdfstring{\"o}{o}nwall's inequality}
We begin with the following result, which may be considered as a Gr\"onwall-type lemma for a second-order differential inequality.

\begin{lemma}
[Gr\"onwall]
\label{res:gronwall-2}
Let $u\in W^{2,\infty}([0,T])$ be such that $u,u'\ge0$.
If
\begin{equation}
\label{eq:gronwall-2}
u''\le cu'+\phi(u)
\quad
\text{a.e.\ in}\ [0,T] 
\end{equation}
for some $c>0$ and $u'(0)\le\delta$ for some $\delta>0$, then
\begin{equation*}
u'(t)
\le 
e^{ct}\big(
\delta+\sqrt{2\Phi(u(t))}
\big)
\quad\text{and}\quad
u(t)
\le
\Psi_\delta^{-1}
\left(
\Psi_\delta(u(0))+e^{ct}-1
\right)
\end{equation*}
for all $t\in[0,T]$.
\end{lemma}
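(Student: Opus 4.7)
The plan is to convert the second-order differential inequality for $u$ into a first-order differential inequality for the ``kinetic energy'' $A(t)=\tfrac12 (u'(t))^2$ by multiplying \eqref{eq:gronwall-2} by the non-negative factor $u'(t)$, and then to apply classical Gr\"onwall's inequality in a form tailored to produce the square-root-of-$\Phi$ term on the right-hand side.

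First, I would observe that since $u'\ge0$ a.e., multiplying \eqref{eq:gronwall-2} by $u'(t)$ preserves the sign of the inequality and yields, a.e.\ on $[0,T]$, the chain-rule identity
\begin{equation*}
\frac{\di}{\di t}\!\left(\frac{(u'(t))^2}{2}\right)
=
u'(t)\,u''(t)
\le
c\,(u'(t))^2+\phi(u(t))\,u'(t)
=
2c\,A(t)+\frac{\di}{\di t}\Phi(u(t)),
\end{equation*}
where in the last step I use that $t\mapsto\Phi(u(t))$ is absolutely continuous (because $u\in W^{2,\infty}$ and $\Phi\in C^1$) with derivative $\phi(u(t))\,u'(t)$. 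This is the key point: squaring the velocity transforms the linear first-order term $cu'$ into $2cA$, which is exactly the linear absorption one needs for Gr\"onwall.

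Next, applying the standard integral form of Gr\"onwall's inequality to the resulting bound $A'(t)\le 2c\,A(t)+\tfrac{\di}{\di t}\Phi(u(t))$ with $A(0)\le\delta^2/2$, I would get
\begin{equation*}
A(t)\le
e^{2ct}\,A(0)+
\int_0^t e^{2c(t-s)}\frac{\di}{\di s}\Phi(u(s))\di s
\le
e^{2ct}\!\left(\frac{\delta^2}{2}+\Phi(u(t))-\Phi(u(0))\right)
\le
e^{2ct}\!\left(\frac{\delta^2}{2}+\Phi(u(t))\right),
\end{equation*}
since $\Phi(u(0))\ge0$. Taking square roots and using $\sqrt{a^2+b^2}\le a+b$ for $a,b\ge0$ delivers
\begin{equation*}
u'(t)
\le
e^{ct}\sqrt{\delta^2+2\Phi(u(t))}
\le
e^{ct}\bigl(\delta+\sqrt{2\Phi(u(t))}\bigr),
\end{equation*}
which is the first claimed estimate.

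Finally, to obtain the bound on $u(t)$ itself, I would divide by the positive quantity $\delta+\sqrt{2\Phi(u(t))}$ and integrate in time, performing the change of variables $r=u(s)$ (legitimate because $u'\ge0$, so $u$ is non-decreasing and absolutely continuous):
\begin{equation*}
\Psi_\delta(u(t))-\Psi_\delta(u(0))
=
\int_{u(0)}^{u(t)}\frac{\di r}{\delta+\sqrt{2\Phi(r)}}
=
\int_0^t\frac{u'(s)\di s}{\delta+\sqrt{2\Phi(u(s))}}
\le
\int_0^t e^{cs}\di s.
\end{equation*}
Inverting the strictly increasing function $\Psi_\delta$ then yields the desired upper bound on $u(t)$. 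The only truly delicate step is the energy manipulation in the first display; once the second-order inequality is converted to a first-order one for $A(t)$, everything reduces to standard Gr\"onwall and a separation-of-variables argument, which is also what justifies calling \eqref{eq:Phi_Theta_osgood} a \emph{second-order} Osgood condition.
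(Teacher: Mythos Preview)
Your proof is correct and follows essentially the same approach as the paper's: multiply \eqref{eq:gronwall-2} by $u'\ge 0$, apply Gr\"onwall to bound $(u')^2$ by $e^{2ct}(\delta^2+2\Phi(u(t)))$, take square roots, then separate variables and integrate. The only cosmetic difference is that you invoke the Duhamel form of Gr\"onwall on $A'\le 2cA+\tfrac{\di}{\di t}\Phi(u)$ directly, whereas the paper integrates first and uses the monotonicity of $t\mapsto\Phi(u(t))$ before applying the standard integral Gr\"onwall; both routes lead to the same bound.
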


\begin{proof}
Multiplying~\eqref{eq:gronwall-2} by $u'\ge0$, we get 
\begin{equation*}
\frac{\di}{\di t}\left[(u')^2\right]
\le 
2c(u')^2+2\phi(u)u'
\quad
\text{a.e.\ in}\ [0,T].
\end{equation*}
Integrating and changing variables, we can estimate
\begin{align*}
(u'(t))^2
&\le
(u'(0))^2
+
2\Phi(u(t))-2\Phi(u(0))
+
2c\int_0^t(u'(s))^2\di s
\\
&\le
\delta^2
+
2\Phi(u(t))
+
2c\int_0^t(u'(s))^2\di s
\end{align*} 
for all $t\in[0,T]$. 
Since $t\mapsto\Phi(u(t))$ is non-decreasing, by Gr\"onwall's inequality we get
\begin{equation*}
(u'(t))^2
\le 
e^{2ct}\left(\delta^2
+
2\Phi(u(t))\right)
\quad
\text{for all}\ t\in[0,T],
\end{equation*}  
so that 
\begin{equation*}
\frac{u'(t)}{\delta
+
\sqrt{2\Phi(u(t))}}
\le 
e^{ct}
\quad
\text{for all}\ t\in[0,T].
\end{equation*}
Integrating the above inequality, we conclude that 
\begin{equation*}
\Psi_\delta(u(t))-\Psi_\delta(u(0))
\le
e^{ct}-1 
\quad
\text{for all}\ t\in[0,T],
\end{equation*}
from which the conclusion follows immediately.
\end{proof}

\subsection{Second-order Cauchy problem}
\label{subsec:cauchy}

We let $b\colon[0,T]\times\R^{2d}\to\R^{2d}$ be given by
\begin{equation}
\label{eq:b_vector_field}
b(t,x,v)=\big(F(t,x,v),E(t,x)\big)
\quad
\text{for}\
t\in[0,T],\ x,v\in\R^d,
\end{equation}
where $E\in L^\infty([0,T];C_b(\R^{d};\R^d))$ satisfies
\begin{equation}
\underset{{t\in[0,T]}}\esssup
\,
\abs*{E(t,x) - E(t,y)} \leq \phi(\abs{x-y})
\quad 
\text{for all}\ x,y \in \R^d
\label{eq:E_cont}
\end{equation}
with $\phi$ as in~\eqref{eq:gronwall_phi} and $F\in L^\infty([0,T];C(\R^{2d};\R^d))$ satisfies
\begin{equation}
\underset{{t\in[0,T]}}\esssup
\,
\abs*{F(t,x,v)- F(t,y,w)} \leq 
L\left[\abs{x-y}+\abs{v-w}\right] 
\quad
\text{for all}\
x,y,v,w\in\R^d
\label{eq:F_lip}  
\end{equation}
for some fixed $L\in[0,+\infty)$.
For any given $x,v\in\R^d$, we consider the Cauchy problem
\begin{equation}
\label{eq:cauchy_problem}
\left\{\!\!
\begin{array}{ll}
\dot\gamma_{x,v}=b(t,\gamma_{x,v}),
&
\text{for}\ t\in(0,T),
\\[1ex]
\gamma(0)=(x,v).
\end{array}
\right.
\end{equation}
Note that~\eqref{eq:cauchy_problem} is in fact a second-order Cauchy problem and can be rewritten as
\begin{equation}
\label{eq:cauchy_problem_XV}
\left\{\!\!
\begin{array}{ll}
\dot X=F(t,X,V),
&
\text{for}\ t\in(0,T),
\\[1ex]
\dot V = E(t,X),
&
\text{for}\ t\in(0,T),
\\[1ex]
X(0)=x,\ V(0)=v,
\end{array}
\right.
\end{equation}
denoting $\gamma_{x,v}(t)=(X(t,x,v),V(t,x,v))$ for $t\in[0,T]$, $x,v\in\R^d$.

\begin{theorem}[ODE well-posedness]
\label{res:cauchy_problem}
Problem~\eqref{eq:cauchy_problem} admits a globally-defined classical solution $\gamma_{x,v}\in W^{1,\infty}([0,T];\R^{2d})$ for all $x,v\in\R^d$.
Moreover, if~$\Phi$ in~\eqref{eq:gronwall_Phi} satisfies condition~\eqref{eq:osgood-2}, then the solution of~\eqref{eq:cauchy_problem} is unique for all $x,v\in\R^d$. 
Finally, letting 
\begin{equation*}
\Gamma\colon[0,T]\times\R^{2d}\to\R^{2d}
\end{equation*}
with $\Gamma(t,x,v)=\gamma_{x,v}(t)$ for $t\in[0,T]$, $x,v\in\R^d$, be the associated flow map, if $\divergence_x F=0$, then $\Gamma(t,\cdot)$ is a measure-preserving homeomorphism on~$\R^{2d}$ for all $t\in[0,T]$. 
\end{theorem}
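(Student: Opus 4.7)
The plan is to address global existence, uniqueness under~\eqref{eq:osgood-2}, and the homeomorphism/measure-preservation property in succession, with \cref{res:gronwall-2} playing the central role.

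For \emph{existence}, the field $b=(F,E)$ is continuous on $[0,T]\times\R^{2d}$, and taking $(y,w)=(0,0)$ in~\eqref{eq:F_lip} together with the boundedness of $E$ yields a linear-growth bound $|b(t,x,v)|\le C(1+|x|+|v|)$. Peano's theorem then produces a local $C^1$ solution of~\eqref{eq:cauchy_problem_XV} for every $(x,v)\in\R^{2d}$, while Gr\"onwall applied to $|(X,V)|$ excludes finite-time blow-up, so $\gamma_{x,v}\in W^{1,\infty}([0,T];\R^{2d})$.

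For \emph{uniqueness}, let $(X_i,V_i)$, $i=1,2$, be two solutions starting from the same datum. The case $L=0$ is immediate since then $F$ is independent of $(x,v)$, so assume $L>0$ and set
\[
u(t):=L\int_0^t\bigl(|X_1(s)-X_2(s)|+|V_1(s)-V_2(s)|\bigr)\di s.
\]
Then $u\in W^{2,\infty}([0,T])$ is non-negative and non-decreasing with $u(0)=\dot u(0)=0$, and integrating $\dot X_1-\dot X_2$ gives $|X_1(t)-X_2(t)|\le u(t)$. Combining~\eqref{eq:F_lip} and~\eqref{eq:E_cont} with the a.e.\ bound $\bigl|\tfrac{\di}{\di t}|X_1-X_2|\bigr|\le|\dot X_1-\dot X_2|$ (and its analogue for $V$), one obtains
\[
\ddot u\le L\dot u+L\phi(|X_1-X_2|)\le L\dot u+L\phi(u)\quad\text{a.e.\ in }[0,T],
\]
the second inequality relying on monotonicity of $\phi$, which may be arranged by replacing $\phi$ with its non-decreasing envelope and is automatic for the $\phi_\Theta$ of~\eqref{eq:phi_Theta}. \cref{res:gronwall-2} with $c=L$ and $\phi$ replaced by $L\phi$ then yields $u(t)\le\Psi_{\delta,L}^{-1}(e^{Lt}-1)$ for every $\delta>0$; letting $\delta\to 0^+$ and invoking~\eqref{eq:osgood-2} forces $u\equiv 0$, hence $X_1\equiv X_2$ and $V_1\equiv V_2$. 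This is the main obstacle of the proof: one has to isolate the genuinely second-order character of~\eqref{eq:cauchy_problem_XV} by choosing $u$ so that $\ddot u$ is controlled by $\phi$ evaluated at $u$ itself—rather than at $|V_1-V_2|$, which would only produce a first-order Osgood condition too weak for super-linear $\Theta$.

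For the \emph{flow map}, continuous dependence of~\eqref{eq:cauchy_problem_XV} on initial data, which follows from uniqueness and Arzel\`a--Ascoli, makes $\Gamma(t,\cdot)\in C^0(\R^{2d};\R^{2d})$; solving the time-reversed Cauchy problem, whose vector field satisfies the same bounds, provides a continuous inverse, so $\Gamma(t,\cdot)$ is a homeomorphism of $\R^{2d}$. If additionally $\divergence_x F=0$, then $\divergence_{(x,v)} b=\divergence_x F+\divergence_v E=0$ since $E$ is independent of $v$. Mollifying $F$ in $(x,v)$ and $E$ in $x$ produces smooth, divergence-free fields $b_\e\to b$ locally uniformly while preserving~\eqref{eq:E_cont}--\eqref{eq:F_lip}; their classical flows $\Gamma_\e$ are measure-preserving by Liouville's formula, and the above stability yields $\Gamma_\e\to\Gamma$ and $\Gamma_\e^{-1}\to\Gamma^{-1}$ locally uniformly, so the identity $(\Gamma_\e(t,\cdot))_\#\mathscr L^{2d}=\mathscr L^{2d}$ passes to the limit when tested against compactly supported continuous functions.
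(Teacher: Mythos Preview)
Your proof is correct and follows essentially the same strategy as the paper: Peano plus linear growth for existence, the second-order Gr\"onwall inequality (\cref{res:gronwall-2}) for uniqueness, and approximation by smooth divergence-free fields for the measure-preserving property. The only difference is organizational: the paper first establishes a general stability estimate between solutions of~\eqref{eq:cauchy_problem} relative to possibly different fields $(F_i,E_i)$ and initial data (\cref{res:stability_cauchy}), then specializes it to obtain uniqueness and, via \cref{res:approximation_cauchy}, the convergence of approximate flows; you instead carry out the uniqueness case directly with a slightly different choice of the auxiliary function~$u$. Your observation that monotonicity of $\phi$ is needed to pass from $\phi(|X_1-X_2|)$ to $\phi(u)$---and can be arranged by replacing $\phi$ with its non-decreasing envelope---is well taken; the paper uses this implicitly.
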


Since $b\in L^\infty([0,T];C(\R^{2d};\R^{2d}))$ has at most linear growth, the first part of \cref{res:cauchy_problem} concerning the global existence of at least one solution of~\eqref{eq:cauchy_problem} follows by standard ODE Theory (namely, by Peano's Theorem and Gr\"onwall's inequality). Instead, the validity of the second part of \cref{res:cauchy_problem} concerning the uniqueness of the solution of~\eqref{eq:cauchy_problem} and the measure-preserving property of the associated flow map follows from the following result. 

\begin{proposition}[ODE stability]
\label{res:stability_cauchy}
Let $i=1,2$, let $b_i=(F_i,E_i)$ be as in~\eqref{eq:b_vector_field}, with $E_i\in L^\infty([0,T];C_b(\R^{d};\R^d))$ satisfying~\eqref{eq:E_cont} and $F_i\in L^\infty([0,T];C(\R^{2d};\R^d))$ satisfying~\eqref{eq:F_lip}, and let $\gamma_i=(X_i,V_i)\in W^{1,\infty}([0,T];\R^{2d})$ be a solution of~\eqref{eq:cauchy_problem} with initial condition $(x_i, v_i)\in\R^{2d}$.
If  
\begin{equation*}
L\abs{x_1-x_2} 
+ 
L\abs{v_1- v_2} 
+ 
L\norm{E_1- E_2}_{L^\infty(C)}
+ 
\norm{F_1-F_2}_{L^\infty(C)}
\le
\delta
\end{equation*}
for some $\delta>0$, then 
\begin{equation*}
\begin{split}
\norm*{\gamma_1-\gamma_2}_{L^\infty}
&\le 
\abs{v_1-v_2} 
+ 
\norm{E_1-E_2}_{L^\infty}  
+
\Psi_{\delta,L}^{-1}
\left(
\Psi_{\delta,L}(|x_1-x_2|)
+
e^{LT}-1
\right)
\\
&\quad
+ 
T\phi\left(
\Psi_{\delta,L}^{-1}
\left(
\Psi_{\delta,L}(|x_1-x_2|)
+
e^{LT}-1
\right)
\right).
\end{split}
\end{equation*}
\end{proposition}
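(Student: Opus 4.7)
The plan is to decompose $\norm*{\gamma_1-\gamma_2}_{L^\infty}$ into separate estimates for the position difference $Y(t):=\abs{X_1(t)-X_2(t)}$ and the velocity difference $Z(t):=\abs{V_1(t)-V_2(t)}$, to control $Y$ via the second-order Gr\"onwall inequality of \cref{res:gronwall-2}, and finally to recover $Z$ from $Y$ through the integral form of the equation for $V$. Setting $\rho:=\norm{F_1-F_2}_{L^\infty(C)}$ and $\sigma:=\norm{E_1-E_2}_{L^\infty}$, integrating the equation $\dot V_i=E_i(t,X_i)$ and exploiting the $\phi$-modulus of continuity of $E_1$ (which we may assume non-decreasing without loss of generality) immediately yield
\[
Z(t)\le \abs{v_1-v_2}+\int_0^t\phi(Y(s))\,\di s+T\sigma \qquad\text{for all } t\in[0,T],
\]
so the task reduces to bounding $Y$.

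To control $Y$ I would construct a smooth majorant by setting
\[
u(t):=\abs{x_1-x_2}+\int_0^t\bigl[L\,Y(s)+L\,Z(s)+\rho+L\sigma\bigr]\,\di s.
\]
Using \eqref{eq:F_lip} inside the integral form of the equation for $X_1-X_2$, one checks that $Y(t)\le u(t)$ on $[0,T]$; moreover $u$ and $u'$ are non-negative, $u(0)=\abs{x_1-x_2}$, and the precise choice of the integrand guarantees $u'(0)=L\abs{x_1-x_2}+L\abs{v_1-v_2}+\rho+L\sigma\le\delta$ by hypothesis. Differentiating $u'$ once more and inserting the a.e.\ bounds $Y'\le L\,Y+L\,Z+\rho$ and $Z'\le\phi(Y)+\sigma$, a short algebraic rearrangement produces
\[
u''(t)\le L\,u'(t)+L\,\phi(u(t)) \qquad\text{a.e.\ in } [0,T],
\]
where the residual $\sigma$-contributions are absorbed thanks to the $L\sigma$ built into the integrand of $u$ together with the monotonicity of $\phi$. \cref{res:gronwall-2} is then applied with $c=L$, after folding the factor $L$ in front of $\phi$ into the definition of $\Phi$ so that $\sqrt{2\Phi}$ is replaced by $\sqrt{2L\Phi}$ and $\Psi_\delta$ by $\Psi_{\delta,L}$. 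This gives
\[
Y(t)\le u(t)\le \Psi_{\delta,L}^{-1}\bigl(\Psi_{\delta,L}(\abs{x_1-x_2})+e^{LT}-1\bigr) \qquad\text{for all } t\in[0,T].
\]
Plugging this estimate back into the first-step bound for $Z$, using the monotonicity of $\phi$, and summing the two contributions (since $\abs{\gamma_1-\gamma_2}\le Y+Z$) yields the claim.

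The principal obstacle is the precise calibration of $u$: its integrand must be chosen so that simultaneously $u$ dominates $Y$ pointwise, $u'(0)$ realizes exactly the combination of quantities appearing in the hypothesis on $\delta$, and the subsequent differentiation produces a clean second-order inequality $u''\le Lu'+L\phi(u)$ with no leftover inhomogeneous terms. This is precisely where the second-order structure of the ODE is essential: since $E$ enters only the equation for $V$, the potentially super-Lipschitz modulus $\phi$ acts once integrated in $Z$ and therefore only twice integrated in $Y$, turning the Osgood condition on $\phi$ into the weaker condition \eqref{eq:osgood-2} on $\sqrt{\Phi}$ required by \cref{res:gronwall-2}.
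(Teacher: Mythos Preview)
Your strategy coincides with the paper's: build a $W^{2,\infty}$ majorant $u\ge Y:=\abs{X_1-X_2}$, verify $u''\le Lu'+L\phi(u)$, invoke \cref{res:gronwall-2} (with $\phi$ replaced by $L\phi$, hence $\Psi_\delta$ by $\Psi_{\delta,L}$), and then recover the $V$-estimate from the integral equation. The gap is only in the calibration of $u$. With your choice $u'=LY+LZ+\rho+L\sigma$ one has $u''=LY'+LZ'$ a.e., and the pointwise bounds $Y'\le LY+LZ+\rho$ and $Z'\le\phi(Y)+\sigma$ give
\[
u''\le L(u'-L\sigma)+L\phi(Y)+L\sigma = Lu'+L\phi(Y)+L\sigma(1-L).
\]
The residual $L\sigma(1-L)$ vanishes only when $L\ge1$; for $L<1$ neither the extra $L\sigma$ you inserted in the integrand nor the monotonicity of $\phi$ (which merely yields $\phi(Y)\le\phi(u)$) absorbs it. In the applications $F(t,x,v)=v$ and $F(t,x,v)=v/\sqrt{1+|v|^2}$ one has $L=1$ and your argument closes, but the proposition is stated for arbitrary $L\ge0$.

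The paper sidesteps this by first inserting the integral bound $Z(s)\le\abs{v_1-v_2}+\sigma+\int_0^s\phi(Y)$ into the inequality for $Y$ and taking $u$ to be the resulting right-hand side. Then $u'$ carries $L\sigma$ as a \emph{constant} (so it disappears in $u''$) together with the term $L\int_0^t\phi(Y)$; one checks directly that $\abs{\dot X_1-\dot X_2}\le u'$, whence $u''\le Lu'+L\phi(Y)\le Lu'+L\phi(u)$ with no leftover. Replacing $LZ(s)$ in your integrand by its majorant $L\abs{v_1-v_2}+L\sigma+L\int_0^s\phi(Y)$ implements exactly this fix while the rest of your argument stays unchanged.
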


\begin{proof}
In the following, we drop the spatial variables to keep the notation short.
In virtue of~\eqref{eq:F_lip} and~\eqref{eq:cauchy_problem_XV}, we can estimate
\begin{equation}
\label{eq:stimo_X-X}
\begin{split}
|X_1(t) &- X_2(t)| 
\leq 
\abs{x_1-x_2} + \int_0^t \abs{F_1(s, X_1(s), V_1(s)) -F_2(s, X_2(s), V_2(s)) }  \di s 
\\ 
& \leq 
\abs{x_1-x_2} + \int_0^t \abs{F_1(s, X_1(s), V_1(s)) -F_1(s, X_2(s), V_2(s)) }  \di s 
\\ 
&\quad+  
\int_0^t \abs{F_1(s, X_2(s), V_2(s)) -F_2(s, X_2(s), V_2(s)) }  \di s 
\\ 
& \leq 
\abs{x_1-x_2} + L\int_0^t \abs{X_1(s) - X_2(s)} \di s
+
L\int_0^t \abs{V_1(s)- V_2(s)}  \di s 
+ 
t \norm{F_1-F_2}_{L^\infty} 
\end{split}
\end{equation}
for all $t\in[0,T]$.
Because of~\eqref{eq:E_cont} and again of~\eqref{eq:cauchy_problem_XV}, we can also estimate
\begin{equation}
\label{eq:stimo_V-V}
\begin{split}
|V_1(s) &- V_2(s)|  
\leq 
\abs{v_1 -v_2}
+ 
\int_0^s \abs{E_1(r, X_1(r)) - E_2(r, X_2(r))}  \di r 
\\ 
& \leq 
\abs{v_1-v_2}  + \int_0^s \abs{E_1(r, X_1(r)) - E_1(r, X_2(r))}  \di r 
\\
&\quad +  
\int_0^s \abs{E_1(r, X_2(r)) - E_2(r, X_2(r))}  \di r 
\\ 
& \leq 
\abs{v_1-v_2} 
+ 
\norm{E_1-E_2}_{L^\infty}  
+ 
\int_0^s \phi (\abs{X_1(r) - X_2(r) })  \di r 
\end{split}
\end{equation}
for all $s\in[0,T]$.
Therefore, we obtain that 
\begin{equation}
\label{eq:definisco_u}
\begin{split}
\abs{X_1(t) - X_2(t)} 
& \leq 
\abs{x_1-x_2} + t\left[ 
L\abs{v_1 -v_2} 
+  
L\norm{E_1-E_2}_{L^\infty}
+
\norm{F_1-F_2}_{L^\infty} 
\right]  
\\ 
& \quad + 
L\int_0^t \abs{X_1(s) - X_2(s)} \di s + L\int_0^t \int_0^s \phi(\abs{X_1(r)- X_2(r)})  \di r  \di s 
\end{split} 
\end{equation}
for all $t\in[0,T]$. 
Letting $u\in W^{2,\infty}([0,T])$ be the function in the right-hand side of~\eqref{eq:definisco_u}, we observe that $u\ge0$, $u(0)=|x_1-x_2|$,
\begin{equation}
\label{eq:calcolo_u'}
\begin{split}
u'(t) & =
L 
\abs{v_1-v_2} 
+ L\norm{E_1-E_2}_{L^\infty} 
+ \norm{F_1- F_2}_{L^\infty} 
+ L\abs{X_1(t)- X_2(t)}
\\ 
& \quad  
+ L\int_0^t \phi(\abs{X_1(s)- X_2(s)})  \di s,
\end{split}
\end{equation}
for all $t\in[0,T]$ and so, in particular, 
\begin{equation*}
u'(0)=
L\abs{x_1-x_2} 
+ L\abs{v_1- v_2} 
+ L\norm{E_1- E_2}_{L^\infty}
+ \norm{F_1-F_2}_{L^\infty}\le\delta.
\end{equation*}
We also observe that 
\begin{equation}
\label{eq:calcolo_u''}
u''(t)
\le 
L\abs{\dot{X}_1(t) - \dot{X}_2(t)} + L\phi(\abs{X_1(t) - X_2(t)})
\quad
\text{for a.e.}\ t\in[0,T].
\end{equation}
We now estimate the right-hand side of~\eqref{eq:calcolo_u''} in terms of~$u$. 
Exploiting~\eqref{eq:F_lip}, \eqref{eq:cauchy_problem_XV} and the estimate in~\eqref{eq:stimo_V-V}, we have
\begin{align*}
|\dot{X}_1(t)- \dot{X}_2(t)|  
&= \abs{F_1(t, X_1(t), V_1(t)) - F_2(t, X_2(t), V_2(t)) }
\\ 
& \leq 
\norm{F_1(t) -F_2(t)}_{C} + L\abs{X_1(t) - X_2(t)} + L\abs{V_1(t) - V_2(t)}
\\ 
& \leq 
\norm{F_1 -F_2}_{L^\infty} + L\abs{X_1(t) - X_2(t)}
+ L\abs{v_1-v_2} 
\\ 
&\quad + 
L\norm{E_1 -E_2}_{L^\infty} 
+ 
L\int_0^t \phi(\abs{ X_1(s) -  X_2(s)})  \di s   
\\ 
& = u'(t) 
\end{align*}
for all $t\in[0,T]$ in virtue of~\eqref{eq:calcolo_u'}.
We thus get that $u$ satisfies
\begin{equation*}
u''\le Lu'+L\phi(u)
\quad
\text{a.e.\ in}\ [0,T],
\end{equation*}
as in~\eqref{eq:gronwall-2} in \cref{res:gronwall-2}, from which we immediately get that
\begin{equation*}
|X_1(t)-X_2(t)|
\le 
\Psi_{\delta,L}^{-1}
\left(
\Psi_{\delta,L}(|x_1-x_2|)
+
e^{Lt}-1
\right)
\end{equation*} 
for all $t\in[0,T]$. 
Consequently, by~\eqref{eq:stimo_V-V}, we also find that
\begin{equation*}
\begin{split}
|V_1(t)-V_2(t)|
\le 
\abs{v_1-v_2} 
+ 
\norm{E_1-E_2}_{L^\infty}  
+ 
t\,\phi\left(
\Psi_{\delta,L}^{-1}
\left(
\Psi_{\delta,L}(|x_1-x_2|)
+
e^{LT}-1
\right)
\right)
\end{split}
\end{equation*}
for all $t\in[0,T]$, from which the conclusion immediately follows.
\end{proof}

From \cref{res:stability_cauchy}, we plainly deduce the following approximation result.

\begin{corollary}[ODE convergence] \label{res:approximation_cauchy}
Let $n\in\mathbb N$, let $b=(F,E),b_n=(F_n,E_n)$ be as in~\eqref{eq:b_vector_field}, with $E,E_n\in L^\infty([0,T];C_b(\R^{d};\R^d))$ satisfying~\eqref{eq:E_cont} and $F,F_n\in L^\infty([0,T];C(\R^{2d};\R^d))$ satisfying~\eqref{eq:F_lip}, and let $\gamma_n=(X_n,V_n)\in W^{1,\infty}([0,T];\R^{2d})$ be a solution of~\eqref{eq:cauchy_problem} with initial condition $(x,v)\in\R^{2d}$.
If $\Phi$ in~\eqref{eq:gronwall_Phi} satisfies~\eqref{eq:osgood-2} and
\begin{equation}
\label{eq:approximation_b}
\lim_{n\to+\infty}\norm*{b_n-b}_{L^\infty}=0,
\end{equation} 
then $(\gamma_n)_{n\in\mathbb N}$ is a Cauchy sequence in $C([0,T]\times \R^{2d})$, and each of its limit points $\gamma=(X,V)$ is a solution of~\eqref{eq:cauchy_problem} relative to $b=(F,E)$ with initial condition $(x,v)$. 
\end{corollary}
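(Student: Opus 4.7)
The plan is to apply the stability estimate of \cref{res:stability_cauchy} to the pair $b_n,b_m$ with the \emph{same} initial condition $(x_1,v_1)=(x_2,v_2)=(x,v)$. Setting
\begin{equation*}
\delta_{n,m} = L\norm{E_n-E_m}_{L^\infty} + \norm{F_n-F_m}_{L^\infty},
\end{equation*}
the cancellations $|x_1-x_2|=|v_1-v_2|=0$ reduce the conclusion of \cref{res:stability_cauchy} to
\begin{equation*}
\norm{\gamma_n-\gamma_m}_{L^\infty([0,T];\R^{2d})}
\le
\norm{E_n-E_m}_{L^\infty}
+
r_{n,m}
+
T\phi(r_{n,m}),
\qquad
r_{n,m} := \Psi_{\delta_{n,m},L}^{-1}(e^{LT}-1),
\end{equation*}
and crucially this bound is independent of $(x,v)$. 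By~\eqref{eq:approximation_b} and the triangle inequality, $\delta_{n,m}\to 0$ as $n,m\to\infty$. Since $\Phi$ satisfies the Osgood-type condition~\eqref{eq:osgood-2}, the discussion preceding \cref{res:gronwall-2} then gives $r_{n,m}\to 0$, and continuity of $\phi$ with $\phi(0)=0$ (forced by~\eqref{eq:gronwall_phi} together with~\eqref{eq:osgood-2}) yields $T\phi(r_{n,m})\to 0$. Hence $(\gamma_n)$ is Cauchy in $C([0,T];\R^{2d})$, uniformly in the initial datum $(x,v)$.

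To identify the limit, I would pass to the Volterra form
\begin{equation*}
\gamma_n(t)=(x,v)+\int_0^t b_n(s,\gamma_n(s))\di s,
\qquad t\in[0,T].
\end{equation*}
Let $\gamma=(X,V)$ denote any limit point; then $\gamma_n\to\gamma$ uniformly on $[0,T]$, so the trajectories $\gamma_n(s),\gamma(s)$ all lie in a common fixed compact set $K\subset\R^{2d}$. On $[0,T]\times K$, hypotheses~\eqref{eq:E_cont}--\eqref{eq:F_lip} endow $b(s,\cdot)$ with a modulus of continuity that is uniform in $s$, so $b(s,\gamma_n(s))\to b(s,\gamma(s))$ uniformly in $s\in[0,T]$; combined with~\eqref{eq:approximation_b}, a dominated-convergence argument transfers the limit inside the integral and gives $\gamma(t)=(x,v)+\int_0^t b(s,\gamma(s))\di s$. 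The integrand is essentially bounded (since $b\in L^\infty$ on compacts), so $\gamma\in W^{1,\infty}([0,T];\R^{2d})$ is a classical solution of~\eqref{eq:cauchy_problem}.

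The bulk of the analytic content is already packaged in the quantitative estimate of \cref{res:stability_cauchy}: once it is in hand, the main obstacle is simply tracking that the right-hand side of the Cauchy bound is independent of $(x,v)$ and invoking the Osgood property of $\Phi$ to drive $\Psi_{\delta,L}^{-1}(e^{LT}-1)$ to zero as $\delta\to 0^+$. The compositional step in the second paragraph is routine given the uniform modulus of $b(s,\cdot)$ on compacts supplied by~\eqref{eq:E_cont}--\eqref{eq:F_lip}.
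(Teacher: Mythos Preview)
Your argument is correct and follows essentially the same route as the paper: apply \cref{res:stability_cauchy} to the pair $\gamma_n,\gamma_m$ with coinciding initial data, invoke the Osgood condition~\eqref{eq:osgood-2} to force $\Psi_{\delta,L}^{-1}(e^{LT}-1)\to 0$ as $\delta\to 0^+$, and then pass to the limit in the integral (Volterra) formulation. One small technicality: your $\delta_{n,m}=L\norm{E_n-E_m}_{L^\infty}+\norm{F_n-F_m}_{L^\infty}$ may vanish, while \cref{res:stability_cauchy} requires a strictly positive $\delta$; the paper sidesteps this by taking $\delta_{m,n}=\norm{E_m-E_n}_{L^\infty}+\norm{F_m-F_n}_{L^\infty}+\tfrac1m+\tfrac1n$, which is always positive and still tends to~$0$. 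Your identification of the limit via the integral equation is more detailed than the paper's, which simply leaves that step to the reader.
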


\begin{proof}
By \cref{res:stability_cauchy}, we immediately infer that 
\begin{equation*}
\begin{split}
\norm*{\gamma_m-\gamma_n}_{L^\infty}
&\le 
\delta_{m,n}  
+
\Psi_{\delta_{m,n},L}^{-1}(e^{LT}-1)
+ 
T\phi\left(
\Psi_{\delta_{m,n},L}^{-1}(e^{LT}-1)
\right).
\end{split}
\end{equation*}
for all $m,n\in\mathbb N$, where 
\begin{equation*}
\delta_{m,n}
=
\norm{E_m- E_n}_{L^\infty}
+ \norm{F_m-F_n}_{L^\infty}+\tfrac1m+\tfrac1n.
\end{equation*}
Since $\delta_{m,n}\to0^+$ as $m,n\to+\infty$, by~\eqref{eq:osgood-2} we infer that $\Psi^{-1}_{\delta_{m,n},L}(e^{LT}-1)\to0^+$ as $m,n\to+\infty$, easily yielding the conclusion.
\end{proof}

We are now ready to prove \cref{res:cauchy_problem}.

\begin{proof}[Proof of \cref{res:cauchy_problem}]
We just need to deal with the second part of the statement concerning the uniqueness of the solution of~\eqref{eq:cauchy_problem} and the measure-preserving property of the associated flow map.
The uniqueness part is an immediate consequence of \cref{res:stability_cauchy}.
Indeed, if $\gamma_1$ and $\gamma_2$ are two solutions of~\eqref{eq:cauchy_problem} relative to $b$ starting from the same initial datum $(x,v)$, with $x,v\in\R^n$, then \cref{res:stability_cauchy} implies that 
\begin{equation*}
\begin{split}
\norm*{\gamma_1-\gamma_2}_{L^\infty}
&\le 
\Psi_{\delta,L}^{-1}(e^{LT}-1)
+ 
T\phi\left(
\Psi_{\delta,L}^{-1}(e^{LT}-1)
\right)
\end{split}
\end{equation*}
for all $\delta>0$.
Since $\Psi_{\delta,L}^{-1}(e^{LT}-1)\to0^+$ as $\delta\to0^+$, we get $\gamma_1=\gamma_2$.
The measure-preserving property of the associated flow map, instead, follows from an approximation argument and \cref{res:approximation_cauchy}.
We leave the simple details to the reader.
\end{proof}

\subsection{Generalized Vlasov--Poisson system}

From now on, we fix a measurable function $K\colon\R^{2d}\to\R^d$, that we call \emph{kernel}, which is assumed to be antisymmetric, i.e., $K(x,y)=-K(x,y)$ for a.e.\ $x,y\in\R^d$.
We thus consider the associated Vlasov--Poisson-type system
\begin{equation} \label{eq:vp_gen}
\left\{\!\!
\begin{array}{ll}
\partial_t f + F\cdot \nabla_x f + E_f\cdot\nabla_v f = 0 
& \text{in}\ (0,T) \times \R^{2d},
\\[2ex]
E_f(t,x)=\displaystyle\int_{\R^d}K(x,y)\,\rho_f(t,y)\di y
&
\text{for}\ t\in[0,T],\ x\in\R^d,
\\[2ex]
\rho_f(t,x)=\displaystyle\int_{\R^d}f(t,x,v)\di v 
&
\text{for}\ t\in[0,T],\ x\in\R^d,
\\[2ex]
f(0,\cdot) = f_0
& \text{on}\ \R^{2d},
\end{array}
\right.
\end{equation}
where the unknown density is $f\in L^\infty([0,T];L^1(\R^{2d}))$ and the initial datum is $f_0\in L^1(\R^{2d})$.
The function $F\in L^\infty([0,T];C(\R^{2d};\R^d))$ in the first line of~\eqref{eq:vp_gen}  always satisfies~\eqref{eq:F_lip}, and may be additionally assumed to satisfy $\divergence_x F=0$. 
If $F(t,x,v)=v$, then~\eqref{eq:vp_gen} reduces to the classical Vlasov--Poisson system, while, if $F(t,x,v)=\frac{v}{\sqrt{1+|v|^2}}$, then~\eqref{eq:vp_gen} becomes the relativistic Vlasov--Poisson system.

\begin{definition}[Weak $\phi$-solution]
\label{def:weak_sol}
We say that $f \in L^\infty([0,T];L^1(\R^{2d}))$ is a \emph{weak $\phi$-solution} of~\eqref{eq:vp_gen} with initial datum $f_0 \in L^1(\R^{2d})$ if
\begin{equation}
\label{eq:K_rho_bounded}
(t,x)\mapsto\int_{\R^d}|K(x,z)|\,|\rho_f(t,z)|\di z
\in 
L^\infty([0,T]\times\R^d),
\end{equation}
\begin{equation}
\label{eq:K_rho_cont}
\underset{t\in[0,T]}\esssup
\int_{\R^d}|K(x,z)-K(y,z)|\,|\rho_f(t,z)|\di z
\le 
\phi(|x-y|)
\quad
\text{for all}\ x,y\in\R^d
\end{equation}
and 
\begin{equation}
\label{eq:def_phi-sol}
\int_0^T\int_{\R^{2d}}
\big(\partial_t\psi+F\cdot\nabla_x\psi+E_f\cdot\nabla_v\psi\big)\,f\di x\di v\di t
=
-\int_{\R^{2d}}
\psi(0,\cdot)\,f_0\di x\di v
\end{equation}
for all $\psi\in C^\infty_c([0,T)\times\R^{2d})$, where $E_f, \rho_f$ are as in~\eqref{eq:vp_gen}. 
\end{definition}

Note that, if $f$ is a weak $\phi$-solution of~\eqref{eq:vp_gen} as in \cref{def:weak_sol}, then~\eqref{eq:K_rho_bounded} and~\eqref{eq:K_rho_cont} lead to $E_f\in L^\infty([0,T];C_b(\R^{d};\R^d))$  satisfying~\eqref{eq:E_cont}.
In particular, the equation~\eqref{eq:def_phi-sol} is well defined, since $fE_f\in L^\infty([0,T];L^1(\R^{2d};\R^d))$ thanks to~\eqref{eq:K_rho_bounded}. 

\begin{definition}[Lagrangian weak $\phi$-solution]
\label{def:lagrangian}
We say that $f \in L^\infty([0,T];L^1(\R^{2d}))$ is a \emph{Lagrangian weak $\phi$-solution} of~\eqref{eq:vp_gen} with initial datum $f_0 \in L^1(\R^{2d})$ if $f$ is a weak $\phi$-solution of~\eqref{eq:vp_gen} as in \cref{def:weak_sol} and, moreover, 
\begin{equation}
\label{eq:def_lagrangian}
f(t,\cdot) = \Gamma(t,\cdot)_\#f_0
\quad
\text{for all}\ t\in[0,T],
\end{equation}
where $\Gamma$ is any flow map associated to the Cauchy problem~\eqref{eq:cauchy_problem} with $b=(F,E)$.
\end{definition}

The following result collects two basic features of Lagrangian weak $\phi$-solutions of~\eqref{eq:vp_gen} that will be useful in the sequel.

\begin{lemma}[Sign and moment preservation]
Assume $\divergence_x F=0$ and $\Phi$ in~\eqref{eq:gronwall_Phi} satisfies~\eqref{eq:osgood-2}.
Let $f \in L^\infty([0,T];L^1(\R^{2d}))$ be a Lagrangian weak $\phi$-solution of~\eqref{eq:vp_gen} with initial datum $f_0 \in L^1(\R^{2d})$.
If $f_0\ge0$, then also $f(t,\cdot)\ge0$ for all $t\in[0,T]$.
Moreover, if $\mu_0=f_0\,\mathscr L^{2d}\in\prob_1(\R^{2d})$, then also $\mu(t,\cdot)=f(t,\cdot)\,\mathscr L^{2d}\in\prob_1(\R^{2d})$ for all $t\in[0,T]$. 
\end{lemma}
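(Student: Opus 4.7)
The plan is to exploit the Lagrangian representation~\eqref{eq:def_lagrangian} together with the measure-preserving property of the flow provided by \cref{res:cauchy_problem}. Since $E_f$ depends only on the spatial variable, one has $\divergence_v E_f=0$, and combined with the hypothesis $\divergence_x F=0$ this means that the full drift $b=(F,E_f)$ is divergence-free on phase space. The Osgood condition~\eqref{eq:osgood-2} then allows me to apply \cref{res:cauchy_problem} and conclude that the flow $\Gamma(t,\cdot)\colon\R^{2d}\to\R^{2d}$ is a measure-preserving homeomorphism for every $t\in[0,T]$. By the change-of-variables formula, this identifies $f(t,\cdot)$ almost everywhere with $f_0\circ\Gamma(t,\cdot)^{-1}$.

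From this pointwise identification the sign preservation $f_0\ge0\Rightarrow f(t,\cdot)\ge0$ is immediate, and likewise the total mass is preserved,
\[ \int_{\R^{2d}} f(t,p)\di p = \int_{\R^{2d}} f_0(q)\di q, \]
so $\mu(t,\cdot)\in\prob(\R^{2d})$ whenever $\mu_0\in\prob(\R^{2d})$. For the $1$-moment, the pushforward formula gives
\[ \int_{\R^{2d}} |p|\,f(t,p)\di p = \int_{\R^{2d}} |\Gamma(t,q)|\,f_0(q)\di q, \]
so the content of the statement reduces to a linear-growth bound $|\Gamma(t,q)|\le A_T+B_T|q|$ on the flow, uniform in $t\in[0,T]$.

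Such a bound I would derive directly from the system~\eqref{eq:cauchy_problem_XV}. The boundedness of $E_f$, provided by~\eqref{eq:K_rho_bounded}, yields $|V(t)|\le|v|+T\,\|E_f\|_{L^\infty}$, and plugging this into the Lipschitz estimate $|F(s,X,V)|\le\|F(\cdot,0,0)\|_{L^\infty}+L(|X|+|V|)$ coming from~\eqref{eq:F_lip} leads, via a standard Gr\"onwall argument applied to $|X(t)|$, to the desired linear bound and hence to a linear bound for $|\Gamma(t,q)|$. Integrating this bound against $f_0$ transfers the finite $1$-moment of $\mu_0$ to $\mu(t,\cdot)$.

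The main obstacle is of a conceptual rather than a computational nature: one has to verify that \cref{res:cauchy_problem} really applies in the phase-space setting, the key observation being that the divergence of $b=(F,E_f)$ on $\R^{2d}$ vanishes because $E_f=E_f(t,x)$ contributes nothing in the $v$-direction. Once this is in place, the remainder of the argument reduces to Liouville's theorem and at-most-linear growth of trajectories of an ODE whose right-hand side is affine in the unknowns, both of which are entirely routine.
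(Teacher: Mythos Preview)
Your proposal is correct and follows essentially the same route as the paper: invoke \cref{res:cauchy_problem} (whose hypothesis $\divergence_x F=0$ is exactly the one assumed here) to get that $\Gamma(t,\cdot)$ is a measure-preserving homeomorphism, deduce $f(t,\cdot)=f_0\circ\Gamma(t,\cdot)^{-1}$ a.e., read off sign preservation, and control the first moment via the push-forward identity together with an at-most-linear growth bound on $|\Gamma(t,\cdot)|$ coming from standard ODE estimates. Your affine bound $|\Gamma(t,q)|\le A_T+B_T|q|$ is in fact slightly more accurate than the paper's stated $|\Gamma(t,z)|\le C|z|e^{CT}$, since $F(\cdot,0,0)$ need not vanish.
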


\begin{proof}
Fix $t\in[0,T]$.
Since $\Gamma(t,\cdot)$ is a measure-preserving homeomorphism by \cref{res:stability_cauchy}, then from~\eqref{eq:def_lagrangian} we easily deduce that
\begin{equation*}
\begin{split}
\mathscr{L}^{2d} \left(\set*{ z\in\R^{2d} : f(t,z) <0 }\right) 
&=
\mathscr{L}^{2d} \left(\set*{ z\in\R^{2d} : f(t,\Gamma(t,z)) < 0 } \right)
\\
&= 
\mathscr{L}^{2d} \left( \set*{ z\in\R^{2d} : f_0(z) <0  } \right)= 0,  
\end{split}
\end{equation*}
so that $f(t,\cdot)\ge0$. 
In addition, if 
\begin{equation*}
\int_{\R^{2d}}|z|\di\mu_0(z)
=
\int_{\R^{2d}}|z|\,f_0(z)\di z
<+\infty,
\end{equation*}
then again by~\eqref{eq:def_lagrangian} we get 
\begin{equation*}
\int_{\R^{2d}}|z|\di\mu(t,z)
=
\int_{\R^{2d}}|z|\,f(t,z)\di z
=
\int_{\R^{2d}}|\Gamma(t,z)|\,f_0(z)\di z<+\infty,
\end{equation*}
since $|\Gamma(t,z)|\le C|z|e^{CT}$ for all $t\in[0,T]$ and $z\in\R^{2d}$, for some $C>0$ depending on $\|E_f\|_{L^\infty}$ and $\norm{F}_{L^\infty(\Lip)}$ only, by standard ODE Theory, in virtue of~\eqref{eq:F_lip} and~\eqref{eq:K_rho_bounded}.
\end{proof}

We can now state and prove the main result of this section, providing a stability property for Lagrangian weak $\phi$-solutions of the Vlasov--Poisson-type system~\eqref{eq:vp_gen}.
The proof of \cref{res:lagrangian_stab_vp_gen} adopts the elementary point of view of~\cite{CS21} and extends the approaches exploited in the proofs of~\cite{M16}*{Th.~1.1} and~\cite{HM18}*{Th.~1.9}. 

\begin{theorem}[Lagrangian stability]
\label{res:lagrangian_stab_vp_gen}
Let $i=1,2$, let $\mu_i\in L^\infty([0,T];\prob_1(\R^{2d}))$ be such that $\mu_i=f_i\,\mathscr L^{2d}$, where $f_i\in L^\infty([0,T];L^1(\R^{2d}))$ is a Lagrangian weak $\phi$-solution of~\eqref{eq:vp_gen},  relative to $(F_i,E_i)$, $E_i=E_{f_i}$, with $F_i\in L^\infty([0,T];C(\R^d;\R^d))$ satisfying~\eqref{eq:F_lip} for some $L\in[1,+\infty)$ and $\divergence_xF_i=0$, with initial datum $f_0^i\in L^1(\R^{2d})$. 
Assume that $\phi$ in~\eqref{eq:gronwall_phi} is concave and $\Phi$ in~\eqref{eq:gronwall_Phi} satisfies~\eqref{eq:osgood-2}.
If
\begin{equation*}
2L\,\wass_1(\mu_0^1,\mu_0^2)+\|F_1-F_2\|_{L^\infty}
<\delta
\end{equation*}
for some $\delta>0$, then
\begin{equation*}
\begin{split}
\wass_1(\mu_1(t,\cdot),\mu_2(t,\cdot))
&\le 
\Psi_{\delta,2L}^{-1}
\big(
\Psi_{\delta,2L}(\wass_1(\mu_0^1,\mu_0^2))+e^{Lt}-1
\big)
\\
&\quad+
e^{Lt}
\left(
\delta+\sqrt{4L\Phi\big(\Psi_{\delta,2L}^{-1}
\big(
\Psi_{\delta,2L}(\wass_1(\mu_0^1,\mu_0^2))+e^{Lt}-1
\big)
\big)}
\right)
\end{split}
\end{equation*}
for all $t\in[0,T]$.
In particular, if $f_0^1=f_0^2$ and $F_1=F_2$, then $f_1=f_2$.
\end{theorem}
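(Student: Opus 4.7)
The plan is to combine the pointwise ODE stability of \cref{res:stability_cauchy} with the Lagrangian representation of admissible solutions. Fix an optimal plan $\pi_0 \in \plan(\mu_0^1, \mu_0^2)$ realizing $\wass_1(\mu_0^1, \mu_0^2)$, and use the Lagrangian identity $\mu_i(t, \cdot) = \Gamma_i(t, \cdot)_\# \mu_0^i$, where $\Gamma_i = (X_i, V_i)$ is a flow of~\eqref{eq:cauchy_problem} with drift $b_i = (F_i, E_i)$, to produce the coupling $\pi_t := (\Gamma_1(t, \cdot), \Gamma_2(t, \cdot))_\# \pi_0 \in \plan(\mu_1(t), \mu_2(t))$. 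This yields the bound $\wass_1(\mu_1(t), \mu_2(t)) \le u_P(t) + u_V(t)$, where $u_P(t) := \int |X_1 - X_2|\,d\pi_0$ and $u_V(t) := \int |V_1 - V_2|\,d\pi_0$ both start bounded by $\wass_1(\mu_0^1, \mu_0^2)$ at $t = 0$.

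Next I would derive differential inequalities for $u_P$ and $u_V$. The Lipschitz bound~\eqref{eq:F_lip} on $F_i$ gives directly $u_P'(t) \le L(u_P(t) + u_V(t)) + \|F_1 - F_2\|_{L^\infty}$. For $u_V'$, I split $|E_1(t, X_1) - E_2(t, X_2)| \le |E_1(t, X_1) - E_1(t, X_2)| + |(E_1 - E_2)(t, X_2)|$: the first term is bounded by $\phi(|X_1 - X_2|)$ via~\eqref{eq:K_rho_cont}, and the second is handled by expressing $(E_1 - E_2)(t, y) = \int (K(y, X_1) - K(y, X_2))\,d\pi_0$ through the pushforward $\rho_i(t, \cdot) = (X_i(t, \cdot))_\# \mu_0^i$ and applying Fubini to get
\begin{equation*}
\int |(E_1 - E_2)(t, X_2)|\,d\pi_0 = \int |(E_1 - E_2)(t, y)|\,d\rho_2(y) \le \int \phi(|X_1 - X_2|)\,d\pi_0,
\end{equation*}
where the last inequality uses the antisymmetry of $K$ (to swap its entries) together with~\eqref{eq:K_rho_cont} applied to $\rho_f = \rho_2$. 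Jensen's inequality (concavity of $\phi$) then yields $u_V'(t) \le 2\phi(u_P(t))$.

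To extract a single second-order inequality, I would introduce the majorant
\begin{equation*}
\tilde u(t) := u_P(0) + t\bigl(L u_V(0) + \|F_1 - F_2\|_{L^\infty}\bigr) + L \int_0^t u_P(s)\,ds + 2L \int_0^t \int_0^s \phi(u_P(r))\,dr\,ds,
\end{equation*}
so that $u_P \le \tilde u$, $\tilde u(0) \le \wass_1(\mu_0^1, \mu_0^2)$, and $\tilde u'(0) \le 2L\wass_1(\mu_0^1, \mu_0^2) + \|F_1 - F_2\|_{L^\infty} < \delta$. Integrating $u_V' \le 2\phi(u_P)$ gives the crucial comparison $L u_V(t) \le \tilde u'(t) - L u_P(t) - \|F_1 - F_2\|_{L^\infty} \le \tilde u'(t)$, whence $u_P'(t) \le L(u_P + u_V) + \|F_1 - F_2\|_{L^\infty} \le \tilde u'(t)$; therefore
\begin{equation*}
\tilde u''(t) = L u_P'(t) + 2L \phi(u_P(t)) \le L \tilde u'(t) + 2L \phi(\tilde u(t)),
\end{equation*}
using monotonicity of $\phi$ (a consequence of $\phi$ being non-negative, concave, and satisfying $\phi(0) = 0$, the latter forced by~\eqref{eq:osgood-2}).

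I would then conclude via \cref{res:gronwall-2} applied with $c = L$ and $\phi$ replaced by $2L\phi$ (so $\Phi$ becomes $2L\Phi$ and $\Psi_\delta$ becomes $\Psi_{\delta,2L}$), which yields $\tilde u(t) \le A(t) := \Psi_{\delta,2L}^{-1}(\Psi_{\delta,2L}(\wass_1(\mu_0^1,\mu_0^2)) + e^{Lt} - 1)$ and $\tilde u'(t) \le B(t) := e^{Lt}(\delta + \sqrt{4L\Phi(A(t))})$. Since $L \ge 1$,
\begin{equation*}
\wass_1(\mu_1(t), \mu_2(t)) \le u_P(t) + u_V(t) \le \tilde u(t) + \tilde u'(t)/L \le A(t) + B(t),
\end{equation*}
which is the required estimate. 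For uniqueness, taking $f_0^1 = f_0^2$ and $F_1 = F_2$ makes both $\wass_1(\mu_0^1, \mu_0^2)$ and $\|F_1 - F_2\|_{L^\infty}$ vanish, so sending $\delta \to 0^+$ forces $A(t), B(t) \to 0$ by~\eqref{eq:osgood-2}, giving $f_1 = f_2$. The hardest part will be the symmetric double-$\phi$ bound on $u_V'$, for which the antisymmetry of $K$ is essential; once this is in place, the second-order structure of the Gr\"onwall argument mirrors the pointwise argument of \cref{res:stability_cauchy}.
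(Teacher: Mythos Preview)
Your proposal is correct and follows essentially the same approach as the paper: optimal coupling $\pi_0$ at time zero pushed forward along the two flows, the averaged displacements $\XX=u_P$ and $\VV=u_V$, the crucial $2\phi(\XX)$ bound on the field difference obtained via antisymmetry of $K$ together with~\eqref{eq:K_rho_cont} and Jensen, the same majorant $\tilde u$ satisfying $\tilde u''\le L\tilde u'+2L\phi(\tilde u)$, and the same closing application of \cref{res:gronwall-2}. The only cosmetic difference is that the paper works with integral rather than differential inequalities for $\XX$ and $\VV$ (so it never needs to differentiate $u_P$ directly) and records $\VV\le u'$ in place of your $u_V\le \tilde u'/L$.
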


\begin{proof}
Let $\pi_0\in\plan(\mu_0^1,\mu_0^2)$ be an optimal plan.
By \cref{def:lagrangian}, we can write $\mu_i(t,\cdot)=\Gamma_i(t,\cdot)_\#\mu_0^i$ for $t\in[0,T]$ and $i=1,2$, so that 
\begin{equation}
\label{eq:plan_t}
\pi(t,\cdot)=(\Gamma_1(t,\proj_1),\Gamma_2(t,\proj_2))_\#\pi_0
\in
\plan(\mu_1(t,\cdot),\mu_2(t,\cdot))
\end{equation}
for all $t\in[0,T]$.
Since $\Gamma_i=(X_i,V_i)$, $i=1,2$, we define
\begin{align}
\XX(t)
&=
\int_{\R^{2d}\times\R^{2d}}
|X_1(t,p)-X_2(t,q)|\di\pi_0(p,q)
\label{eq:def_XX}
\\
\VV(t)
&=
\int_{\R^{2d}\times\R^{2d}}
|V_1(t,p)-V_2(t,q)|\di\pi_0(p,q)
\nonumber
\end{align}
for all $t\in[0,T]$, where $p=(x,v)$ and $q=(y,w)$. 
Arguing as in~\eqref{eq:stimo_X-X}, we can estimate
\begin{equation*}
\begin{split}
|X_1(t,p)-X_2(t,q)|
&\le 
|x-y|
+
L\int_0^t|X_1(s,p)-X_2(s,q)|\di s
+
L\int_0^t|V_1(s,p)-V_2(s,q)|\di s
\\
&\quad +
t\|F_1-F_2\|_{L^\infty}
\end{split}
\end{equation*}
for all $t\in[0,T]$, so that 
\begin{equation*}
\begin{split}
\XX(t)
&\le 
\int_{\R^{2d}\times\R^{2d}}|x-y|\di\pi_0(p,q)
+
t\|F_1-F_2\|_{L^\infty}
+
L\int_0^t\XX(s)\di s
+
L\int_0^t\VV(s)\di s
\\
&\le 
\wass_1(\mu_0^1,\mu_0^2)
+
t\|F_1-F_2\|_{L^\infty}
+
L\int_0^t\XX(s)\di s
+
L\int_0^t\VV(s)\di s
\end{split}
\end{equation*}
Similarly arguing as in~\eqref{eq:stimo_V-V}, we also get that 
\begin{equation*}
\begin{split}
|V_1(t,p)-V_2(t,q)|
\le
|v-w|
+
\int_0^t
|E_1(s,X_1(s,p))-E_2(s,X_2(s,q))|
\di s
\end{split}
\end{equation*} 
for all $t\in[0,T]$, so that 
\begin{equation}
\label{eq:VV_EE}
\begin{split}
\VV(t)
&\le 
\int_{\R^{2d}\times\R^{2d}}|v-w|\di\pi_0(p,q)
\\
&\quad+
\int_0^t
\int_{\R^{2d}\times\R^{2d}}
|E_1(s,X_1(s,p))-E_2(s,X_2(s,q))|
\di\pi_0(p,q)
\di s
\\
&\le
\wass_1(\mu_0^1,\mu_0^2)
+
\int_0^t
\int_{\R^{2d}\times\R^{2d}}
|E_1(s,X_1(s,p))-E_2(s,X_2(s,q))|
\di\pi_0(p,q)
\di s
\end{split}
\end{equation}
for all $t\in[0,T]$ and so, in particular,
\begin{equation*}
\begin{split}
\XX(t)
&\le 
(1+Lt)\,\wass_1(\mu_0^1,\mu_0^2)
+
t\|F_1-F_2\|_{L^\infty}
+
L\int_0^t\XX(s)\di s
\\
&\quad
+
L\int_0^t
\int_0^s
\int_{\R^{2d}\times\R^{2d}}
|E_1(r,X_1(r,p))-E_2(r,X_2(r,q))|
\di\pi_0(p,q)
\di r
\di s
\end{split}
\end{equation*}
for all $t\in[0,T]$.
Now we have
\begin{equation*}
\begin{split}
|E_1(r,X_1(r,p))-E_2(r,X_2(r,q))|
&\le 
|E_1(r,X_1(r,p))-E_1(r,X_2(r,q))|
\\
&\quad+
|E_1(r,X_2(r,q))-E_2(r,X_2(r,q))|.
\end{split}
\end{equation*}
On the one side, since $f_1$ is a weak $\phi$-solution of~\eqref{eq:vp_gen} with respect to $(F_1,E_1)$, by~\eqref{eq:K_rho_cont} $E_1$ satisfies~\eqref{eq:E_cont}, and thus we can estimate
\begin{equation*}
|E_1(r,X_1(r,p))-E_1(r,X_2(r,q))|
\le 
\phi(|X_1(r,p)-X_2(r,q)|).
\end{equation*}
On the other side, again since $f_1$ and $f_2$ are weak $\phi$-solutions of~\eqref{eq:vp_gen}, we can write 
\begin{equation*}
\begin{split}
|E_1(r,&X_2(r,q))-E_2(r,X_2(r,q))|
\\
&=
\abs*{
\int_{\R^d}K(X_2(r,q),z)\,\rho_1(r,z)\di z
-
\int_{\R^d}K(X_2(r,q),z')\,\rho_2(r,z')\di z'
}
\\
&=
\abs*{
\int_{\R^{2d}}K(X_2(r,q),z)\,f_1(r,z,u)\di z\di u
-
\int_{\R^d}K(X_2(r,q),z')\,f_2(r,z',u')\di z'\di u'
}
\\
&=
\abs*{
\int_{\R^{2d}}K(X_2(r,q),X_1(r,o))\,f_0^1(o)\di o
-
\int_{\R^d}K(X_2(r,q),X_2(r,o'))\,f_0^2(o')\di o'
}
\end{split}
\end{equation*}
where in the last equality we changed variables, in virtue of~\eqref{eq:def_lagrangian}, letting $o=(z,u)$ and $o'=(z',u')$ for brevity. 
Since $\pi_0\in\plan(\mu_0^1,\mu_0^2)$, we can thus write
\begin{equation*}
\begin{split}
\bigg|
\int_{\R^{2d}}K(X_2(r,q)&,X_1(r,o))\,f_0^1(o)\di o
-
\int_{\R^d}K(X_2(r,q),X_2(r,o))\,f_0^2(o')\di o'
\bigg|
\\
&=
\bigg|
\int_{\R^{2d}}K(X_2(r,q),X_1(r,o))\di\mu_0^1(o)
-
\int_{\R^d}K(X_2(r,q),X_2(r,o))\di\mu_0^2(o')
\bigg|
\\
&=
\bigg|
\int_{\R^{2d}\times\R^{2d}}\big(
K(X_2(r,q),X_1(r,o))
-
K(X_2(r,q),X_2(r,o'))
\big)
\di\pi_0(o,o')
\bigg|
\\
&\le 
\int_{\R^{2d}\times\R^{2d}}\big|
K(X_2(r,q),X_1(r,o))
-
K(X_2(r,q),X_2(r,o'))
\big|
\di\pi_0(o,o')
\end{split}
\end{equation*}
Therefore, again changing variables in virtue of~\eqref{eq:def_lagrangian}, we get
\begin{equation*}
\begin{split}
&\int_{\R^{2d}\times\R^{2d}}
|E_1(r,X_2(r,q))-E_2(r,X_2(r,q))|
\di\pi_0(p,q)
\\
&\quad\le 
\int_{\R^{2d}\times\R^{2d}}
\int_{\R^{2d}\times\R^{2d}}\big|
K(X_2(r,q),X_1(r,o))
-
K(X_2(r,q),X_2(r,o'))
\big|
\di\pi_0(p,q)
\di\pi_0(o,o')
\\
&\quad=
\int_{\R^{2d}\times\R^{2d}}
\int_{\R^{2d}}\big|
K(h,X_1(r,o))
-
K(h,X_2(r,o'))
\big|
\,\rho_2(t,h)\di h
\di\pi_0(o,o')
\\
&\quad\le
\int_{\R^{2d}\times\R^{2d}}
\phi(|X_1(r,o)-X_2(r,o')|)
\di\pi_0(o,o').
\end{split}
\end{equation*}
Recalling that $\phi$ is concave, by Jensen's inequality we conclude that 
\begin{equation*}
\begin{split}
\int_{\R^{2d}\times\R^{2d}}
|E_1(r&,X_1(r,p))-E_2(r,X_2(r,q))|
\di\pi_0(p,q)
\\
&\le
2
\int_{\R^{2d}\times\R^{2d}}
\phi(|X_1(r,p)-X_2(r,q)|)
\di\pi_0(p,q)
\le2\,\phi(\XX(r)), 
\end{split}
\end{equation*}
so that 
\begin{equation}
\label{eq:XX_u}
\begin{split}
\XX(t)
&\le 
(1+Lt)\,\wass_1(\mu_0^1,\mu_0^2)
+
t\|F_1-F_2\|_{L^\infty}
+
L\int_0^t\XX(s)\di s
+
2L\int_0^t
\int_0^s
\phi(\XX(r))\di r
\di s
\end{split}
\end{equation}
for all $t\in[0,T]$.
In addition, recalling~\eqref{eq:VV_EE}, we also get that 
\begin{equation}
\label{eq:VV_phi}
\VV(t)
\le 
\wass_1(\mu_0^1,\mu_0^2)
+
2\int_0^t\phi(\XX(s))\di s
\end{equation} 
for all $t\in[0,T]$.
Now, letting $u\in W^{2,\infty}([0,T])$ be the function on the right-hand side of~\eqref{eq:XX_u}, we immediately get that $u,u'\ge0$ with $u(0)=
\wass_1(\mu_0^1,\mu_0^2)$ and
\begin{equation}
\label{eq:XX_u'}
\begin{split}
u'(t)
=
L\,\wass_1(\mu_0^1,\mu_0^2)
+
\|F_1-F_2\|_{L^\infty}
+
L\XX(t) 
+
2L
\int_0^t
\phi(\XX(s))\di s
\end{split}
\end{equation}
for all $t\in[0,T]$, so that,  
$u'(0)\le2L\,\wass_1(\mu_0^1,\mu_0^2)+\|F_1-F_2\|_{L^\infty}$.
Furthermore, we have
\begin{equation*}
\begin{split}
u''(t)
=
L\dot\XX(t) 
+
2L
\phi(\XX(t))
\end{split}
\end{equation*}
for a.e.\ $t\in(0,T)$. 
Note that, in virtue of the definition in~\eqref{eq:def_XX} and of problem~\eqref{eq:cauchy_problem_XV}, 
\begin{equation*}
\begin{split}
\dot\XX(t)
&\le
\int_{\R^{2d}\times\R^{2d}}
|\dot X_1(t,p)-\dot X_2(t,q)|\di\pi_0(p,q)
\\
&=
\int_{\R^{2d}\times\R^{2d}}
|F_1(t,X_1(t,p),V_1(t,p))-F_2(t,X_2(t,q),V_2(t,q))|\di\pi_0(p,q)
\\
&\le 
\norm*{F_1-F_2}_{L^\infty},
\end{split}
\end{equation*}
so that, recalling~\eqref{eq:XX_u} and~\eqref{eq:XX_u'} and since $\phi$ is non-decreasing, 
\begin{equation*}
u''(t)
\le 
L\norm*{F_1-F_2}_{L^\infty}
+
2L
\phi(\XX(t))
\le 
Lu'(t)
+2L\phi(u(t))
\end{equation*}
for a.e.\ $t\in(0,T)$.
Thanks to \cref{res:gronwall-2}, we thus conclude that, if 
\begin{equation*}
2L\,\wass_1(\mu_0^1,\mu_0^2)+\|F_1-F_2\|_{L^\infty}
<\delta
\end{equation*}
for some $\delta>0$, then 
\begin{equation*}
\XX(t)
\le 
\Psi_{\delta,2L}^{-1}
\big(
\Psi_{\delta,2L}(\wass_1(\mu_0^1,\mu_0^2))+e^{Lt}-1
\big)
\end{equation*}
for all $t\in[0,T]$. 
Moreover, from~\eqref{eq:VV_phi} and~\eqref{eq:XX_u'}, we also get that 
$\VV(t)\le u'(t)$, so that 
\begin{equation*}
\VV(t)
\le 
e^{Lt}
\big(
\delta+\sqrt{4L\Phi(\XX(t))}
\big)
\le 
e^{Lt}
\left(
\delta+\sqrt{4L\Phi\big(\Psi_{\delta,2L}^{-1}
\big(
\Psi_{\delta,2L}(\wass_1(\mu_0^1,\mu_0^2))+e^{Lt}-1
\big)
\big)}
\right)
\end{equation*}
for all $t\in[0,T]$, in virtue of \cref{res:gronwall-2}.
To conclude, we simply note that, by~\eqref{eq:plan_t},
\begin{equation*}
\begin{split}
\wass_1(\mu_1(t,\cdot),\mu_2(t,\cdot))
&\le
\int_{\R^{2d}\times\R^{2d}}
|p-q|\di\pi(t,p,q)
\\
&=
\int_{\R^{2d}\times\R^{2d}}
|\Gamma_1(t,p)-\Gamma_2(t,q)|\di\pi_0(p,q)
\le
\XX(t)+\VV(t)
\end{split}
\end{equation*}
for all $t\in[0,T]$, readily ending the proof.
\end{proof}

\section{Proofs of the main results}
\label{sec:proofs}

\subsection{Proof of \texorpdfstring{\cref{res:relation_class_modulus}}{mapping properties of K}}

We begin with the proof \cref{res:relation_class_modulus}.
Actually, we achieve the following slightly stronger result.
Here and in the following, the kernel~$K$ is as in~\eqref{eq:kernel}.

\begin{proposition}[Mapping properties of $K$] \label{res:mapping_kernel}
There is a dimensional constant $C_d>0$ with the following property.
If $\rho \in L^1(\R^d) \cap Y^\Theta_\ul(\R^d)$, then $K*\rho\in C^{0,\phi_\Theta}_b(\R^d)$, with
\begin{align}
\label{eq:bounded_est_kernel}
\|K * \rho\|_{L^\infty}
&\leq 
C_d 
\left(\norm{\rho}_{L^1}+\norm{\rho}_{Y^\Theta_\ul}\right),
\\
\label{eq:cont_est_kernel}
\int_{\R^d} \abs{ K(x-z) - K(y-z)}\,{\rho(z)}  \di z &\leq C_d \left( \norm{\rho}_{L^1} + \norm{\rho}_{Y^\Theta_\ul} \right) \phi_{\Theta}(\abs{x-y})
\quad
\forall x,y\in\R^d.
\end{align}
\end{proposition}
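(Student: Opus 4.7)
The proof adopts the classical near-field/far-field decomposition of Riesz-potential convolutions, with a Hölder exponent chosen as a function of $|x-y|$. It generalizes the computations of~\cite{CS21}*{Sec.~2} from $d=2$ to arbitrary dimensions and parallels the Morrey-type estimates in~\cite{MB02}*{Chap.~8}.

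For the $L^\infty$ bound~\eqref{eq:bounded_est_kernel}, I would split
\[
\int_{\R^d}|K(x-z)|\,\rho(z)\di z = \int_{B_1(x)}+\int_{\R^d\setminus B_1(x)}.
\]
On the outer region $|K(x-z)| \leq 1$, so that piece is controlled by $\norm{\rho}_{L^1}$. On the inner region I apply Hölder's inequality with a fixed exponent $p>d$, for instance $p=d+1$: the function $z \mapsto |z|^{-(d-1)}$ lies in $L^{p/(p-1)}(B_1)$ because $(d-1)p/(p-1) < d$, whence
\[
\int_{B_1(x)} \frac{\rho(z)}{|x-z|^{d-1}}\di z \leq C_d\,\norm{\rho}_{L^{d+1}(B_1(x))} \leq C_d\,\norm{\rho}_{Y^\Theta_\ul},
\]
after absorbing $\Theta(d+1)$ into the constant.

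For the continuity estimate~\eqref{eq:cont_est_kernel}, set $r=|x-y|$. If $r \geq e^{-d-1}$, then $\phi_\Theta(r)$ equals a positive constant and the bound reduces to~\eqref{eq:bounded_est_kernel} applied at both endpoints. The delicate range is $r<e^{-d-1}$: I would split the integral as $I_1+I_2+I_3$ over the three regions $B_{2r}(x)$, $B_1(x)\setminus B_{2r}(x)$, and $\R^d\setminus B_1(x)$, and treat each piece with a tailored strategy. On the innermost ball I use the crude bound $|K(x-z)-K(y-z)| \leq |K(x-z)|+|K(y-z)|$ and apply Hölder's inequality to each summand, obtaining
\[
I_1 \leq C_d\,r^{1-d/p}\,\Theta(p)\,\norm{\rho}_{Y^\Theta_\ul}.
\]
On the intermediate annulus I exploit the mean-value estimate $|K(x-z)-K(y-z)| \leq C_d\,r\,|x-z|^{-d}$, valid for $|z-x| \geq 2r$, followed by Hölder's inequality: a direct computation of $\bigl(\int_{2r}^1 s^{-1-d/(p-1)}\di s\bigr)^{(p-1)/p}$ yields
\[
I_2 \leq C_d\,r\,(p-1)^{1-1/p}\,(2r)^{-d/p}\,\Theta(p)\,\norm{\rho}_{Y^\Theta_\ul}.
\]
On the far region the same mean-value bound together with $|x-z|^{-d} \leq 1$ gives $I_3 \leq C_d\,r\,\norm{\rho}_{L^1}$.

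The key step is the optimization $p=|\log r|$, which is admissible since $|\log r|>d+1$ precisely thanks to the chosen threshold $e^{-d-1}$. With this choice $r^{-d/p}=e^d$ and $(p-1)^{1-1/p} \sim |\log r|$, so that
\[
I_1+I_2+I_3 \leq C_d\,r\,|\log r|\,\Theta(|\log r|)\bigl(\norm{\rho}_{L^1}+\norm{\rho}_{Y^\Theta_\ul}\bigr) = C_d\,\phi_\Theta(r)\bigl(\norm{\rho}_{L^1}+\norm{\rho}_{Y^\Theta_\ul}\bigr),
\]
which is~\eqref{eq:cont_est_kernel}. The main obstacle is the balancing in $I_2$: as $p$ grows, the $L^{p/(p-1)}$-norm of $|x-z|^{-d}$ on the annulus blows up like $(p-1)^{1-1/p}$, while the $L^p_\ul$-norm of $\rho$ grows only like $\Theta(p)$. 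Matching these two rates at $p=|\log r|$ recovers exactly the prescribed modulus $\phi_\Theta(r)=r|\log r|\Theta(|\log r|)$, explaining why both the logarithmic factor and the growth function $\Theta$ appear in the final bound in precisely the advertised form.
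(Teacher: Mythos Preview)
Your proposal is correct and follows essentially the same strategy as the paper: the $L^\infty$ bound via the near/far split with H\"older at a fixed exponent $p=d+1$, the three-zone decomposition of the oscillation integral, the triangle inequality on the innermost ball, a pointwise gradient-type bound on the intermediate annulus, the trivial $L^1$ control on the far region, and the optimization $p=|\log r|$.

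The only differences are cosmetic. The paper places the outer cutoff at radius~$2$ rather than~$1$ (immaterial, since $\|\rho\|_{L^p(B_2(x))}\le C_d\|\rho\|_{L^p_\ul}$), and in the intermediate annulus it invokes an explicit oscillation lemma (\cref{res:osc_kernel}) giving the symmetric bound
\[
|K(x-z)-K(y-z)|\le C_d\left(\frac{1}{|x-z|\,|y-z|^{d-1}}+\frac{1}{|y-z|\,|x-z|^{d-1}}\right)|x-y|,
\]
which it then reduces to your $C_d\,r\,|x-z|^{-d}$ via $|y-z|\ge\tfrac12|x-z|$. Your direct appeal to the mean-value estimate is equivalent and slightly shorter.
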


To prove \cref{res:mapping_kernel}, we need the following simple estimate, which generalizes~\cite{CS21}*{Eq.~(2.2)} to any dimension $d\ge2$.

\begin{lemma}[Oscillation]
\label{res:osc_kernel}
There exists a dimensional constant $C_d>0$ such that
\begin{equation} \label{eq:osc_gamma}
\abs{K(x-z) - K(y-z)} \leq C_d \left( \frac{1}{\abs{x-z}\abs{y-z}^{d-1}} + \frac{1}{\abs{y-z}\abs{x-z}^{d-1}} \right) \abs{x-y}
\end{equation}
for all $x,y,z\in\R^d$ with $x,y\ne z$.
\end{lemma}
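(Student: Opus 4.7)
The plan is to prove the estimate purely algebraically, via an exact factorization of $K(a)-K(b)$ where $a:=x-z$ and $b:=y-z$, rather than by the mean value theorem. The reason for avoiding the MVT is that the segment joining $a$ and $b$ may pass arbitrarily close to the origin (or even through it when $a,b$ point in opposite directions), so the bound $|\nabla K(w)|\lesssim |w|^{-d}$ on the segment does not integrate.

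First, I rewrite the difference with a common denominator:
\begin{equation*}
K(a)-K(b)
=\frac{a}{|a|^d}-\frac{b}{|b|^d}
=\frac{a\,|b|^d - b\,|a|^d}{|a|^d\,|b|^d}.
\end{equation*}
The crucial step is the right algebraic split of the numerator. I add and subtract $a|a|^d$ to obtain
\begin{equation*}
a\,|b|^d - b\,|a|^d
= a\bigl(|b|^d-|a|^d\bigr)+(a-b)\,|a|^d,
\end{equation*}
so that, passing to absolute values,
\begin{equation*}
|K(a)-K(b)|
\le \frac{\bigl||b|^d-|a|^d\bigr|}{|a|^{d-1}\,|b|^d}+\frac{|a-b|}{|b|^d}.
\end{equation*}

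Next I control $\bigl||b|^d-|a|^d\bigr|$ via the elementary factorization
$|b|^d-|a|^d = (|b|-|a|)\sum_{k=0}^{d-1}|b|^{d-1-k}|a|^k$, combined with the reverse triangle inequality $\bigl||b|-|a|\bigr|\le |a-b|$; this gives
\begin{equation*}
\bigl||b|^d-|a|^d\bigr|\le d\,\max(|a|,|b|)^{d-1}\,|a-b|.
\end{equation*}
By the symmetry of the claimed estimate \eqref{eq:osc_gamma} in the roles of $a$ and $b$, I may assume without loss of generality that $|a|\le|b|$, whence $\max(|a|,|b|)^{d-1}=|b|^{d-1}$ and, moreover, $|b|^d\ge |a|^{d-1}|b|$ so that $|b|^{-d}\le |a|^{-(d-1)}|b|^{-1}$. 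Plugging these two inequalities in yields
\begin{equation*}
|K(a)-K(b)|
\le \frac{d\,|a-b|}{|a|^{d-1}\,|b|}+\frac{|a-b|}{|a|^{d-1}\,|b|}
=\frac{(d+1)\,|a-b|}{|a|^{d-1}\,|b|}.
\end{equation*}
In the opposite case $|b|<|a|$, the symmetric bound $\frac{(d+1)|a-b|}{|b|^{d-1}|a|}$ holds. Either way, adding the two terms (each of which dominates in the respective case) gives \eqref{eq:osc_gamma} with $C_d=d+1$, after substituting back $a-b=x-y$.

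There is no serious obstacle: the only subtlety is the choice of the split in the numerator so that the leftover $|a|^d$ factor cancels neatly against one denominator, producing a form that is symmetric after taking the worst case. A symmetric treatment like the one in~\cite{CS21}*{Sec.~2} for $d=2$ generalizes verbatim once one recognizes that the $d$-th power difference is handled by the standard geometric-series factorization.
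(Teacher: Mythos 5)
Your proof is correct. The algebraic split $a|b|^d-b|a|^d=a(|b|^d-|a|^d)+(a-b)|a|^d$ is exact, the geometric-series bound $\bigl||b|^d-|a|^d\bigr|\le d\max(|a|,|b|)^{d-1}|a-b|$ is standard, and the reduction to the case $|a|\le|b|$ is legitimate because the target estimate is symmetric under $a\leftrightarrow b$ (in the opposite case one simply uses the mirrored split $a|b|^d-b|a|^d=(a-b)|b|^d+b(|b|^d-|a|^d)$). The paper proceeds differently: it first establishes the exact identity $\bigl|\tfrac{a}{|a|^d}-\tfrac{b}{|b|^d}\bigr|=\frac{\abs{a|a|^{d-2}-b|b|^{d-2}}}{|a|^{d-1}|b|^{d-1}}$ by expanding the square of both sides, and then bounds the numerator by the mean value theorem applied to the \emph{regular} map $F_d(\xi)=\xi|\xi|^{d-2}$, whose gradient is $O(|\xi|^{d-2})$, using convexity of $\xi\mapsto|\xi|^{d-2}$ (here $d\ge2$ is needed) to control it on the segment by $|a|^{d-2}+|b|^{d-2}$. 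So both arguments share the key idea you identify — extract the singular denominators exactly before estimating, rather than applying the MVT to $K$ itself — but yours replaces the identity-plus-calculus step with pure algebra and the reverse triangle inequality, yielding the explicit constant $C_d=d+1$ and working without any appeal to convexity or differentiability; the paper's version, in exchange, produces a single symmetric expression from the outset rather than a case split. Either is a complete proof of the lemma.
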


\begin{proof}
We can assume $z=0$ without loss of generality. 
For $x,y\in\R^d\setminus\set*{0}$, we have
\begin{equation*}
\bigg| \frac{x}{\abs{x}^d} - \frac{y}{\abs{y}^d}\bigg|^2 
= 
\frac{1}{\abs{x}^{2(d -1)}} + \frac{1}{\abs{y}^{2(d -1)}} - \frac{ 2(x \cdot y)}{\abs{x}^d \abs{y}^d}
= 
\left[ \frac{\abs{x\abs{x}^{d -2}-y \abs{y}^{d -2} }}{\abs{x}^{d -1} \abs{y}^{d -1}}\right]^2, 
\end{equation*}
so that 
\begin{equation*}
\bigg|\frac{x}{\abs{x}^d } - \frac{y}{\abs{y}^d }\bigg| = \frac{\abs{x\abs{x}^{d -2}-y \abs{y}^{d -2} }}{\abs{x}^{d -1} \abs{y}^{d -1}}
\end{equation*} 
for all $x,y\in\R^d\setminus\set*{0}$.
Letting $F_d (\xi)= \xi \abs{\xi}^{d -2}$ for all $\xi \in \R^d$, we have 
$\abs{\nabla F_d (\xi)} \leq C_d  \abs{\xi}^{d -2}$ for all $\xi\in\R^d$, where $C_d >0$ is a dimensional constant.
Hence
\begin{equation*}
\abs*{x \abs{x}^{d-2} - y \abs{y}^{d-2} } 
\leq  
\abs{x-y}
\sup_{t \in [0,1]} \abs{\nabla F_d  (x+ t(x-y))}  
\leq 
C_d \,\abs{x-y}\sup_{t \in [0,1]} \abs{x+ t(x-y)}^{d -2}
\end{equation*}
for all $x,y\in\R^d$.
Since $d  \geq 2$, and thus the function $\xi \mapsto \abs{\xi}^{d -2}$ is convex, we can estimate 
\begin{equation*}
\abs{x + t(x-y)}^{d -2}
\leq 
(1-t)\abs{x}^{d  -2} + t \abs{y}^{d -2} 
\leq 
\abs{x}^{d -2} + \abs{y}^{d -2}
\end{equation*}
for all $x,y\in\R^d$.
Therefore, we get that  
\begin{equation*}
\bigg|\frac{x}{\abs{x}^d } - \frac{y}{\abs{y}^d }\bigg| 
= 
\frac{\abs{x\abs{x}^{d -2}-y \abs{y}^{d -2} }}{\abs{x}^{2(d -1)} \abs{y}^{d -1}} 
\leq 
C_d \, 
\abs{x-y} \left[ \frac{\abs{x}^{d -2} + \abs{y}^{d -2}}{\abs{x}^{d -1} \abs{y}^{d -1}} \right]
\end{equation*}
for all $x,y \in \R^d\setminus\set*{0}$,
yielding~\eqref{eq:osc_gamma} for $z=0$. 
\end{proof}

We can now prove \cref{res:mapping_kernel}. 
We follow the strategy of the proofs of~\cite{CS21}*{Th.~2.2 and Cor.~2.4}.
We also refer to the proofs of~\cite{HM18}*{Lem.~2.1} and~\cite{GS15}*{Ths.~A and~B}.

\begin{proof}[Proof of \cref{res:mapping_kernel}]
We write  
$K = K^1 + K^\infty$, with $K^1=K \mathbf{1}_{B_1}\in L^{\frac{d+1}d}(\R^d)$ and $K^\infty = K \mathbf{1}_{B_1^c}\in L^\infty(\R^d)$.
Since $\rho \in L^1 \cap L^{d+1}_\ul(\R^d)$, we can estimate
\begin{align*}
\abs{K} &* { \rho}(x) 
\leq 
\abs{K^1} * { \rho}(x) + \abs{K^\infty} * { \rho}(x)  
\leq 
\norm{K^1}_{L^{\frac{d+1}d}} 
\norm{\rho}_{L^{d+1}(B_1(x))} 
+ 
\norm{K^\infty}_{L^\infty} 
\norm{\rho}_{L^1} 
\\
& \leq 
\max\set*{\norm{K^1}_{L^{\frac{d+1}d}},\norm{K^\infty}_{L^\infty} } \left(\norm{\rho}_{L^{d+1}_\ul} + \norm{\rho}_{L^1}\right)
\le 
C_d \,(\norm{\rho}_{L^{d+1}_\ul} + \norm{\rho}_{L^1}) 
\\ 
& \leq 
C_d \left(\Theta(d+1) \,\norm{\rho}_{Y^\Theta_\ul} + \norm{\rho}_{L^1}\right) 
\leq 
C_d\, (\norm{\rho}_{Y^\Theta_\ul} + \norm{\rho}_{L^1})
\end{align*}
for all $x\in\R^d$, yielding~\eqref{eq:bounded_est_kernel}.
To  prove~\eqref{eq:cont_est_kernel}, fix $x,y \in \R^d$ and set $\e= \abs{x-y}$.
Due to~\eqref{eq:bounded_est_kernel}, we can assume $\e<e^{-d-1}$ without loss of generality. 
We write 
\begin{align*}
\int_{\R^d} |K(x-z)& - K(y-z)|\,{\rho(z)} \di z
\\ 
& =
\left( \int_{B_2(x)^c} + \int_{B_2(x) \setminus B_{2\e}(x)} + \int_{B_{2\e}(x)} \right) \abs{K(x-z) - K(y-z)}\,{\rho(z)} \di z.
\end{align*}
By \cref{res:osc_kernel}, we can estimate the first integral as 
\begin{align*}
\int_{B_2(x)^c} & \abs{K(x-z) - K(y-z)}\,{\rho(z)} \di z 
\\ 
& \leq 
C_d \,\abs{x-y} \int_{B_2(x)^c} \left( \frac{1}{\abs{x-z}\abs{y-z}^{d-1}} + \frac{1}{\abs{y-z}\abs{x-z}^{d-1}} \right)\,{\rho(z)} \di z
\\ 
& \leq 
C_d \, \abs{x-y} \,\norm{\rho}_{L^1}. 
\end{align*}
Concerning the second integral, since 
\begin{equation*}
\abs{y-z} \geq \frac{1}{2} \abs{x-z}
\quad
\text{for all}\ 
z \in B_2(x) \setminus B_{2\e}(x),
\end{equation*}
again by \cref{res:osc_kernel} we can estimate 
\begin{align*}
&\int_{B_2(x) \setminus B_{2\e}(x)}  \abs{K(x-z) -K(y-z)}\,{\rho(z)} \di z
\\ & \ \leq C_d\,\abs{x-y} \int_{B_2(x) \setminus B_{2\e}(x)} \left( \frac{1}{\abs{x-z}\abs{y-z}^{d-1}} + \frac{1}{\abs{y-z}\abs{x-z}^{d-1}} \right)\,{\rho(z)}\di z 
\\ &  \ \leq C_d\,\abs{x-y} \int_{B_2(x) \setminus B_{2\e}(x)} \frac{{\rho(z)}}{\abs{x-z}^d} \di z 
\leq C_{d}\,\abs{x-y} \,\norm{\rho}_{L^p(B_2(x))} \left( \int_{2\e}^2 r^{-d p' + d-1}  \di r \right)^{\frac{1}{p'}}
\\ & \  \leq C_{d}\,\abs{x-y} \,\norm{\rho}_{L^p_\ul} \left( \tfrac{2^{-d p'+d} (1 - \e^{-d p' +d})}{-d p' +d} \right)^{\frac{1}{p'}}
\leq C_{d}\,\abs{x-y} \,\norm{\rho}_{L^p_\ul}\, 2^{-\frac{d}{p}} (\e^{-\frac{d}{p-1}}-1)^{\frac{p-1}{p}} \left( \tfrac{p-1}{d}\right)^{\frac{p-1}{p}}
\\ & \
\leq C_{d}\,\abs{x-y}\, \norm{\rho}_{L^p_\ul}\, p\, \e^{-\frac{d}{p}} 
 \leq C_{d}\,p\, \Theta(p)\, \norm{\rho}_{Y^\Theta_\ul} \,\abs{x-y}^{1-\frac{d}{p}}. 
\end{align*}
for any $p>d+1$, with $p'$ the conjugate of $p$.
Finally, regarding the third and last integral, since $B_{2\e}(x) \subset B_{3\e}(y)$, we can estimate
\begin{align*}
\int_{B_{2\e}(x)} & \abs{K(x-z) - K(y-z)}\,{\rho(z)} \di z 
\leq \int_{B_{2\e}(x)} \frac{{\rho(z)}}{\abs{x-z}^{d-1}}  \di z + \int_{B_{3\e}(z)} \frac{{\rho(z)}}{\abs{y-z}^{d-1}} \di z 
\\ & \leq C_{d}\,\norm{\rho}_{L^p_\ul} \left( \int_0^{3\e} r^{(-d+1)p' +d-1}  \di r \right)^{\frac{1}{p'}}
\leq C_{d}\,\norm{\rho}_{L^p_\ul} \left( \tfrac{(3\e)^{(-d+1)p' + d}}{(-d+1)p'+d} \right)^{\frac{1}{p'}}
\\ & \leq C_{d}\,\norm{\rho}_{L^p_\ul} (3\e)^{1-\frac{d}{p}} \left( \tfrac{p-1}{p-d} \right)^{\frac{p-1}{p}}
\leq C_d\,p\,\Theta(p)\, \norm{\rho}_{Y^\Theta_\ul}\, \abs{x-y}^{1-\frac{d}{p}} 
\end{align*}
again for $p>d+1$.
Putting everything altogether, we conclude that  
\begin{align*}
\int_{\R^d} \abs{K(x-z) - K(y-z)}\,{\rho(z)} \di z
\leq 
C_d\left( \norm{\rho}_{L^1(\R^d)} + \norm{\rho}_{Y^\Theta_\ul} \right)\,p\, \Theta(p)\, \abs{x-y}^{1-\frac{d}{p}}
\end{align*}
for $x,y \in \R^d$ with $\abs{x-y}< e^{-d-1}$ and $p>d+1$.
In particular, choosing $p=-\log\abs{x-y}$, since $r^{\frac{d}{\log(r)}}=e^d$  for $r \in (0,1)$, we obtain that 
\begin{align*}
\int_{\R^d} |K(x-z) &- K(y-z)|\,{\rho(z)} \di z
\\
& \leq C_d \,(\norm{\rho}_{L^1} + \norm{\rho}_{Y^\Theta_\ul})\,|x-y| \,|\log\abs{x-y}|\, \Theta(|\log\abs{x-y}|)\, \abs{x-y}^{\frac{d}{\log{\abs{x-y}}}} 
\\
&\le 
C_d \,(\norm{\rho}_{L^1} + \norm{\rho}_{Y^\Theta_\ul})\, \phi_\Theta (\abs{x-y})
\end{align*}
for $x,y \in \R^d$ with $\abs{x-y}< e^{-d-1}$, completing the proof of~\eqref{eq:cont_est_kernel}.
\end{proof}

\subsection{Proof of \texorpdfstring{\cref{res:main_lagrangian}}{Lagrangian stability}}

In view of \cref{res:lagrangian_stab_vp_gen}, we just have to check that, if $f\in \class^\Theta([0,T])$ is a Lagrangian weak solution of~\eqref{eq:vp} in the sense of \cref{def:admissible_lagrangian}, then $f$ is a Lagrangian weak $\phi_\Theta$-solution of~\eqref{eq:vp_gen} with $F(t,x,v)=v$ for $t\in[0,T]$ and $x,v\in\R^d$ and $E_f=K*\rho_f$, where $K$ is as in~\eqref{eq:kernel}.
Indeed, we just need to check the validity of~\eqref{eq:K_rho_bounded} and~\eqref{eq:K_rho_cont}, but these respectively follow from~\eqref{eq:bounded_est_kernel} and~\eqref{eq:cont_est_kernel} in \cref{res:mapping_kernel}.
\qed

\begin{remark}[Relativistic case]
Note that the above argument \textit{verbatim} applies to the relativistic setting, that is, choosing $F(t,x,v)=\frac v{\sqrt{1+|v|^2}}$ for $t\in[0,T]$ and $x,v\in\R^d$. 
\end{remark}

\subsection{Proof of \texorpdfstring{\cref{res:main_existence}}{Existence}}

From now on, we assume $d\in\set*{2,3}$.
We begin with the following result, providing a suitable initial datum for the construction of the weak solution in \cref{res:main_existence}. 

\begin{lemma}[Datum]
\label{res:initial_datum}
If $\theta\colon\R^d\to\R$ satisfies~\eqref{eq:theta_existence}, then $f_0\colon\R^{2d}\to[0,+\infty)$ given by
\begin{equation}
\label{eq:initial_datum_theta}
f_0(x,v)=\frac{\mathbf 1_{(-\infty,0]}\left(|v|^2-\theta(x)^{\frac2d}\right)}{|B_1|\,\|\theta\|_{L^1}},
\quad
\text{for}\
x,v\in\R^d,
\end{equation}
satisfies $f_0\in L^1(\R^{2d})\cap L^\infty(\R^{2d})$, $f_0\,\mathscr L^{2d}\in\prob_1(\R^{2d})$ and, for some constant $C>0$,
\begin{equation}
\label{eq:moment_est_LP_datum}
\int_{\R^{2d}} \abs{v}^p\, f_0(x,v)  \di x  \di v 
\leq 
\frac{\|\theta\|_{L^{\frac pd+1}}^{\frac pd+1}}{\|\theta\|_{L^1}}
\quad
\text{for all}\ p\in[1,+\infty).
\end{equation} 
\end{lemma}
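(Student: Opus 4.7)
The plan is to verify each of the three claims by direct computation, exploiting the fact that $f_0$ is a normalized indicator of the set $\set{(x,v)\in\R^{2d}:\abs{v}\le\theta(x)^{1/d}}$ on which it takes the constant value $\frac{1}{|B_1|\norm{\theta}_{L^1}}$.

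First I would establish the $L^1$ and $L^\infty$ properties. The $L^\infty$ bound $\norm{f_0}_{L^\infty}\le\frac{1}{|B_1|\norm{\theta}_{L^1}}$ is immediate from~\eqref{eq:initial_datum_theta}, and is finite since $\theta\not\equiv 0$. For the $L^1$ norm, I apply Fubini and integrate in $v$ first: the inner integral equals the volume $|B_{\theta(x)^{1/d}}|=|B_1|\theta(x)$, so
\begin{equation*}
\norm{f_0}_{L^1(\R^{2d})}=\frac{1}{|B_1|\norm{\theta}_{L^1}}\int_{\R^d}|B_1|\theta(x)\di x=1.
\end{equation*}
This shows $f_0\in L^1(\R^{2d})$ and simultaneously that $f_0\mathscr{L}^{2d}$ is a probability measure.

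For the first-moment property $f_0\mathscr{L}^{2d}\in\prob_1(\R^{2d})$, I would split $\int(\abs{x}+\abs{v})\,f_0\di x\di v$ into two contributions. The $\abs{x}$ part reduces by Fubini to $\frac{1}{\norm{\theta}_{L^1}}\int\abs{x}\,\theta(x)\di x$, which is finite by~\eqref{eq:theta_existence} since $\abs{x}\le 1\vee\abs{x}$. The $\abs{v}$ part is the $p=1$ case of~\eqref{eq:moment_est_LP_datum}, whose right-hand side is finite in the $Y^\Theta$ setting of~\cref{res:main_existence} where $\theta\in L^{1+1/d}$.

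Finally, for~\eqref{eq:moment_est_LP_datum}, Fubini together with spherical coordinates give
\begin{equation*}
\int_{\R^{2d}}\abs{v}^p f_0(x,v)\di x\di v=\frac{1}{|B_1|\norm{\theta}_{L^1}}\int_{\R^d}\int_{\abs{v}\le\theta(x)^{1/d}}\abs{v}^p\di v\di x=\frac{d}{(d+p)\,\norm{\theta}_{L^1}}\int_{\R^d}\theta(x)^{(d+p)/d}\di x,
\end{equation*}
using the identity $\int_{\abs{v}\le R}\abs{v}^p\di v=\frac{d|B_1|}{d+p}R^{d+p}$. Since $\frac{d}{d+p}\le 1$ and $\frac{d+p}{d}=\frac{p}{d}+1$, the last integral equals $\norm{\theta}_{L^{p/d+1}}^{p/d+1}$, yielding the claimed bound. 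There is no substantive obstacle here; the only mild bookkeeping concerns the dimensional constants in the spherical integration.
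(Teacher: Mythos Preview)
Your proof is correct and follows essentially the same route as the paper: both compute $\rho_0(x)=\theta(x)/\|\theta\|_{L^1}$ via Fubini and then reduce the moment estimate to $\|\theta\|_{L^{p/d+1}}^{p/d+1}/\|\theta\|_{L^1}$. The only minor difference is that for~\eqref{eq:moment_est_LP_datum} the paper bounds $|v|^p\le\theta(x)^{p/d}$ pointwise on $\mathrm{supp}\,f_0$ and then integrates against $\rho_0$, whereas you evaluate the inner $v$-integral exactly via spherical coordinates (obtaining the sharper constant $\tfrac{d}{d+p}$ before discarding it); both are equally valid and elementary.
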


\begin{proof}
Note that $|v|\le\theta(x)^\frac1d$ for all $(x,v)\in\mathrm{supp} f_0$.
We thus have
\begin{equation}
\label{eq:rho_theta}
\rho_0(x)
=
\int_{\R^d} f_0(x,v)\di v
=
\frac{\mathscr L^d\big(\set*{v\in\R^d:|v|\le \theta(x)^\frac1d}\big)
}{|B_1|\|\theta\|_{L^1}}
=
\frac{\theta(x)}{\|\theta\|_{L^1}}
\end{equation}
for all $x\in\R^d$.
Consequently,
we can estimate
\begin{align*}
\int_{\R^{2d}} \abs{v}^p\, f_0(x,v)  \di x  \di v 
\le 
\int_{\R^{2d}} \abs{\theta(x)}^\frac pd\, f_0(x,v)  \di x  \di v
= 
\int_{\R^{2d}} \abs{\theta(x)}^\frac pd\, \rho_0(x)  \di x
=
\frac{\|\theta\|_{L^{\frac pd+1}}^{\frac pd+1}}{\|\theta\|_{L^1}},
\end{align*}
readily yielding the conclusion.
\end{proof}

We can now prove \cref{res:main_existence}.
Actually, we prove the following more precise result.

\begin{proposition}[Existence]
\label{res:lions-perthame} 
Assume that $\theta\in Y^\Theta(\R^d)$ satisfies~\eqref{eq:theta_existence}.
There exists a Lagrangian weak solution 
\begin{equation*}
f\in C([0,T];L^p(\R^{2d}))
\cap
L^\infty([0,T]\times\R^{2d})
\cap\class^\Theta([0,T])
\quad
\text{for all}\ 
p\in[1,+\infty)
\end{equation*}
of the system~\eqref{eq:vp} starting from~$f_0$ in~\eqref{eq:initial_datum_theta} of \cref{res:initial_datum} such that $f(t,\cdot)\,\mathscr L^{2d}\in\prob_1(\R^{2d})$, 
\begin{equation}
\label{eq:rho_C_LP}
\rho_f\in C([0,T];L^p(\R^d))
\quad
\text{for all}\
p\in[1,+\infty)
\end{equation}
and, for some constant $C_T>0$ depending on~$T$,
\begin{equation}
\label{eq:rho_est_LP}
\frac{\|\theta\|_{L^q}}{\|\theta\|_{L^1}}\le\|\rho_f\|_{L^\infty([0,T];L^q)}
\le C_T\|\theta\|_{L^q}
\quad
\text{for all}\ q\in\left[1,+\infty\right).
\end{equation}
\end{proposition}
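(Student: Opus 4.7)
The approach is to invoke the classical Lions--Perthame global existence theorem~\cite{LP91}*{Th.~1} with initial datum $f_0$ from~\eqref{eq:initial_datum_theta}, and then upgrade the output to the claimed Yudovich bound on $\rho_f$ by exploiting the Lagrangian structure together with the interpolation inequality relating $\rho_f$ to velocity moments of $f$.

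By \cref{res:initial_datum}, the datum $f_0$ belongs to $L^1(\R^{2d})\cap L^\infty(\R^{2d})$ with $\|f_0\|_{L^\infty}=(|B_1|\,\|\theta\|_{L^1})^{-1}$, and~\eqref{eq:moment_est_LP_datum}, combined with $\theta\in Y^\Theta(\R^d)\subset L^p(\R^d)$ for every $p\in[1,+\infty)$, guarantees that all velocity moments of $f_0$ are finite. Hence any $m>d$ can be chosen in~\eqref{eq:m-moment-vel}, and \cite{LP91}*{Th.~1} delivers a global weak solution $f\in L^\infty([0,T];L^1\cap L^\infty(\R^{2d}))$ with $\rho_f\in L^\infty([0,T];L^\infty(\R^d))$. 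By \cref{res:mapping_kernel}, $E_f$ is bounded and continuous, so the flow $\Gamma_f=(X,V)$ of~\eqref{eq:vp_ODE} is globally defined. An approximation argument (regularizing $f_0$ by smooth compactly supported data, applying classical Vlasov--Poisson theory, and passing to the limit using \cref{res:approximation_cauchy} together with the stability estimate of \cref{res:stability_cauchy}) identifies $f$ with $\Gamma_f(t,\cdot)_\# f_0$, yielding the Lagrangian property in the sense of \cref{def:admissible_lagrangian}. Since $\divergence_{(x,v)}(v,E_f)=0$, the flow is measure-preserving, and from $\dot V=E_f(t,X)$ one gets the pointwise bound $|V(t,x,v)|\le|v|+T\|E_f\|_{L^\infty}$.

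It remains to establish~\eqref{eq:rho_est_LP}. For every $p\ge1$, the Lagrangian structure and measure preservation give
\[
\int_{\R^{2d}} |v|^p\, f(t,x,v)\di x\di v
=\int_{\R^{2d}} |V(t,x,v)|^p\, f_0(x,v)\di x\di v
\le 2^{p-1}\left(\int_{\R^{2d}} |v|^p\, f_0\di x\di v + (T\|E_f\|_{L^\infty})^p\|f_0\|_{L^1}\right).
\]
Combining this with the standard interpolation inequality $\|\rho_f(t,\cdot)\|_{L^q}^q\le C_d\|f(t,\cdot)\|_{L^\infty}^{q-1}\int_{\R^{2d}}|v|^{d(q-1)}f(t,x,v)\di x\di v$ and with~\eqref{eq:moment_est_LP_datum} for the exponent $p=d(q-1)$ (so that $p/d+1=q$), taking $q$-th roots shows that every $q$-dependent factor (namely $2^{(d(q-1)-1)/q}$, $\|f\|_{L^\infty}^{(q-1)/q}$ and $(T\|E_f\|_{L^\infty})^{d(q-1)/q}$) stays bounded as $q\to+\infty$; moreover, since $\int(1\vee|x|)\theta\di x<+\infty$ concentrates most of the mass of $\theta$ in a bounded region, Hölder's inequality yields $\|\theta\|_{L^q}\ge c>0$ for a constant independent of $q$, so that the additive error terms not proportional to $\|\theta\|_{L^q}$ can be absorbed into $C_T\|\theta\|_{L^q}$. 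This delivers the upper bound in~\eqref{eq:rho_est_LP}, while the lower bound follows trivially from $\rho_f(0,\cdot)=\theta/\|\theta\|_{L^1}$, see~\eqref{eq:rho_theta}. Mass conservation, the bound on the first moment of $\mu(t,\cdot)$, and the time-continuity statement~\eqref{eq:rho_C_LP} are standard. The principal difficulty is the rigorous identification of the Lions--Perthame solution with the Lagrangian pushforward $\Gamma_f(t,\cdot)_\# f_0$, for which we rely on the approximation scheme above in the log-Lipschitz regularity framework provided by \cref{res:relation_class_modulus}.
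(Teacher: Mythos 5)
Your overall strategy is the same as the paper's: invoke \cite{LP91}*{Th.~1} with the datum of \cref{res:initial_datum}, get the lower bound in~\eqref{eq:rho_est_LP} from $\rho_f(0,\cdot)=\theta/\|\theta\|_{L^1}$ (cf.~\eqref{eq:rho_theta}), and get the upper bound by combining propagation of velocity moments with the interpolation $\|\rho_f(t,\cdot)\|_{L^{p/d+1}}\lesssim M_p(t)^{d/(p+d)}$, tracking the $p$-dependence of all constants. Your way of propagating the moments (the pointwise bound $|V(t)|\le|v|+T\|E_f\|_{L^\infty}$ along characteristics) is a legitimate variant of the Grönwall argument the paper borrows from~\cite{M16}, and your absorption of the additive constant using $\inf_{q\ge1}\|\theta\|_{L^q}>0$ is correct (although the right reason is simply $\theta\ge0$, $\theta\not\equiv0$, not the moment condition in~\eqref{eq:theta_existence}). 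However, there are two concrete problems.

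First, the claim that \cite{LP91}*{Th.~1} delivers $\rho_f\in L^\infty([0,T];L^\infty(\R^d))$ is false and cannot be true here: $\rho_f(0,\cdot)=\theta/\|\theta\|_{L^1}$ is unbounded precisely in the cases of interest (otherwise the proposition is covered by Loeper's theory and there is nothing to prove). Lions--Perthame only propagates finite velocity moments, hence gives $\rho_f\in L^\infty([0,T];L^{m/d+1})$ for each finite $m$. This matters because you use the (false) $L^\infty_x$ bound to justify applying \cref{res:mapping_kernel} and conclude $E_f\in L^\infty$; applying \cref{res:mapping_kernel} directly would instead require the uniform-in-time $Y^\Theta_\ul$ bound on $\rho_f$, i.e.\ exactly the estimate~\eqref{eq:rho_est_LP} you are in the middle of proving, so the argument as written is circular. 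The correct (and easy) fix, which is what the paper does via \cite{LP91}*{Eq.~(16)}, is to note that $\|E_f\|_{L^\infty}$ is controlled by $\|\rho_f\|_{L^1}+\|\rho_f\|_{L^{d+1}}$, and the $L^{d+1}$ norm is finite uniformly in time by moment propagation alone; this gives the fixed constant $\|E_f\|_{L^\infty}\le C_T<+\infty$ that your moment estimate needs, with no circularity. Second, your identification of $f$ with $\Gamma_f(t,\cdot)_\#f_0$ leans on \cref{res:approximation_cauchy}, but that corollary requires the Osgood condition~\eqref{eq:osgood-2} (equivalently~\eqref{eq:Phi_Theta_osgood}), which is \emph{not} among the hypotheses of this proposition (nor of the existence theorem it feeds into). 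As stated, your limit passage for the flows does not go through; the paper instead relies on the almost-everywhere-defined characteristics coming with the Lions--Perthame construction itself, for which no Osgood condition is needed. Both issues are repairable, but as written they are genuine gaps.
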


\begin{proof}
By~\cite{LP91}*{Th.~1} (for $d=3$, the case $d=2$ being similar, see~\cites{M16,HM18}), there exists 
\begin{equation*}
f\in C([0,+\infty);L^p(\R^{2d}))
\cap
L^\infty([0,+\infty)\times\R^{2d})
\quad
\text{for all}\ 
p\in[1,+\infty)
\end{equation*}
a weak solution of the system~\eqref{eq:vp} starting from $f_0$ in~\eqref{eq:initial_datum_theta} of \cref{res:initial_datum} and such that 
\begin{equation}
\label{eq:moment_est_LP}
\sup_{t\in[0,T]}
\int_{\R^{2d}} \abs{v}^p\, f(t,x,v)  \di x  \di v 
<+\infty
\quad
\text{for all}\ p\in[1,+\infty).
\end{equation}
Note that the notion of weak solution here is well-posed in the sense of \cref{def:weak_sol_vp}, since $E_f\in L^\infty([0,T]\times\R^d)$ in virtue of~\eqref{eq:moment_est_LP} and~\cite{LP91}*{Eq.~(16)}.
Moreover, $f$ is constant along characteristic curves of~\eqref{eq:vp_ODE} which are defined almost everywhere.
Finally, by~\cite{LP91}*{Eq.~(8)} and~\eqref{eq:moment_est_LP_datum}, we get~\eqref{eq:rho_C_LP}. 
Thus, we just need to show~\eqref{eq:rho_est_LP}, so that $f\in\class^\Theta([0,T])$ in particular.
For the first inequality in~\eqref{eq:rho_est_LP}, we observe that 
\begin{equation*}
\|\rho_f\|_{L^\infty(L^q)}
\ge 
\|\rho_f(0,\cdot)\|_{L^q}
=
\|\rho_0\|_{L^q}
=
\frac{\|\theta\|_{L^q}}{\|\theta\|_{L^1}}
\end{equation*}
because of~\eqref{eq:rho_theta} and~\eqref{eq:rho_C_LP}.
For the second inequality in~\eqref{eq:rho_est_LP}, we argue as in~\cite{M16}*{Sec.~3}. 
By~\cite{LP91}*{Eq.~(14)}, we can estimate \begin{equation*}
\|\rho_f(t,\cdot)\|_{L^{\frac pd +1}}
\le 
CM_p(t)^{\frac d{p+d}}
\quad
\text{for}\ t\in[0,T],
\end{equation*}
for some constant $C_T>0$ independent of $p$ and $t\in[0,T]$, but dependent on $T>0$, which may vary from line to line in what follows, where 
\begin{equation*}
M_p(t)
=
\int_{\R^{2d}} \abs{v}^p\, f(t,x,v)  \di x  \di v.
\end{equation*}
Exploiting~\eqref{eq:vp_ODE} and the fact that $f$ is constant along characteristics, we can estimate
\begin{equation*}
M_p(t)\le M_p(0)+C_T\,p\int_0^tM_p(s)^{1-\frac1p}\di s
\end{equation*} 
By a simple Gr\"onwall-type argument, we infer that 
\begin{equation*}
\sup_{t\in[0,T]}M_p(t)\le M_p(0)+C_T^p
\quad
\text{for all}\ t\in[0,T].
\end{equation*}
Since $f(0,\cdot)=f_0$, by~\eqref{eq:moment_est_LP_datum} we get
\begin{equation*}
M_p(t)^{\frac d{p+d}}
\le
\left(\frac{\|\theta\|_{L^{\frac pd+1}}^{\frac pd+1}}{\|\theta\|_{L^1}}
+C_T^p\right)^{\frac d{p+d}}
\le C_T\,\|\theta\|_{L^{\frac pd+1}},
\end{equation*} 
proving the second inequality in~\eqref{eq:rho_est_LP} and ending the proof.
\end{proof}

\subsection{Proof of \texorpdfstring{\cref{res:saturation}}{Saturation}}

We need some notation and the preliminary \cref{res:iterated_log_growth} below.
For each $m\in\mathbb N$, we define $\ell_m\colon[0,+\infty)\to[0,+\infty)$ by letting 
\begin{equation}
\ell_{m}(r)  = 
\mathbf{1}_{(0, \e_{m})}(r)
\,\log_{m}(r) 
\quad
\text{for all}\ r\ge0,
\label{iterated log}
\end{equation}
where $\e_m\in(0,1)$ is such that $\log_{m}(\e_m)=-1$ (recall the notation in~\eqref{eq:logol}).

\begin{lemma}
\label{res:iterated_log_growth}
For $m\in\mathbb N$, there are $p_m\in[1,+\infty)$ and $0<a_m<b_m<+\infty$ such that
\begin{equation}
\label{eq:iterated_log_growth}
a_m\log_{m-1}(p)
\le 
\norm*{\ell_m(|\cdot|)}_{L^p}
\le 
b_m\log_{m-1}(p)
\quad
\text{for all}\ p\ge p_m.
\end{equation}  
\end{lemma}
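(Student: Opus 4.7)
The plan is to reduce the question to a one-dimensional integral via spherical symmetry and then match upper and lower bounds of the form $(\log_{m-1}(p))^p$ by elementary means. Passing to polar coordinates and substituting $r = e^{-u}$ (so that $r^{d-1}\di r$ turns into $e^{-du}\di u$, up to a sign change absorbed by flipping the integration bounds),
\begin{equation*}
\norm{\ell_m(\abs{\cdot})}_{L^p}^p = \abs{\mathbb{S}^{d-1}}\int_{\abs{\log\e_m}}^{\infty}\abs{\log_m(e^{-u})}^p e^{-du}\di u,
\end{equation*}
where for $u > 1$ the identity $\log_m(e^{-u}) = \log_{m-1}(u)$ holds, with $\log_{m-1}$ coinciding with the pure iterated logarithm. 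Fix some $u_0 \ge 1$ beyond which $\log_{m-1}$ is positive and strictly increasing; the contribution from the compact range $u \in [\abs{\log\e_m}, u_0]$ is bounded by $C(m)^p$ for some fixed $C(m)$, and is negligible relative to $(\log_{m-1}(p))^p$ once $\log_{m-1}(p) \ge C(m)$.

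For the lower bound, restrict the integration to $u \in (p/d,\, p/d+1)$, on which $e^{-du} \ge e^{-p-d}$ and, by slow variation of the iterated logarithm, $\log_{m-1}(u) \ge \tfrac12 \log_{m-1}(p)$ for $p$ large. This produces the estimate $\bigl(\tfrac12\log_{m-1}(p)\bigr)^p e^{-p-d}$, whose $p$-th root is $\ge a_m\log_{m-1}(p)$ after absorbing the prefactor $\abs{\mathbb{S}^{d-1}}^{1/p}\to 1$.

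For the upper bound, split $e^{-du} = e^{-du/2}\cdot e^{-du/2}$ and apply a Laplace-type estimate:
\begin{equation*}
\int_{u_0}^{\infty}(\log_{m-1}(u))^p e^{-du}\di u \le \Bigl(\sup_{u \ge u_0}\log_{m-1}(u)\,e^{-du/(2p)}\Bigr)^p\cdot\int_{u_0}^{\infty}e^{-du/2}\di u.
\end{equation*}
The maximizer $u^* = u^*(p)$ of $\log_{m-1}(u)\,e^{-du/(2p)}$ is characterized by $u^*\log u^*\cdots\log_{m-1}(u^*) = 2p/d$, whence $u^* \le 2p/d$ and $\log_{m-1}(u^*) \le \log_{m-1}(2p/d) \le C_m\log_{m-1}(p)$ by a short induction on $m$: for $m \ge 2$ one has $\log_{m-1}(2p/d) = \log\bigl(\log_{m-2}(2p/d)\bigr) \le \log\bigl(C_{m-1}\log_{m-2}(p)\bigr) \le \log C_{m-1} + \log_{m-1}(p) \le 2\log_{m-1}(p)$ for $p$ large, while the base case $m = 1$ is trivial since $\log_0 = \mathrm{id}$. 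Taking $p$-th roots gives the upper estimate $b_m\log_{m-1}(p)$.

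The main delicate point is the quantitative slow variation estimate $\log_{m-1}(cp) \le C_m\log_{m-1}(p)$ for bounded $c > 0$, together with a careful treatment of the compact boundary region $u \in [\abs{\log\e_m}, u_0]$ where $\log_m(e^{-u})$ may change sign; neither, however, interferes with the $p$-th root asymptotics, since both issues only contribute multiplicative factors bounded independently of $p$.
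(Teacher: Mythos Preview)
Your argument is correct. The lower bound is essentially the same as the paper's: both restrict the integral to a region near the origin (the paper uses $|x|<e^{-p}$, you use $u\in(p/d,p/d+1)$ after the change of variables, which is the same localization) where the integrand is roughly $(\log_{m-1}(p))^p$ on a set of measure $\sim e^{-dp}$.

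The upper bound, however, follows a genuinely different route. The paper proceeds by \emph{induction on $m$}: the base case $m=1$ is a direct Gamma-function computation plus Stirling, and the inductive step exploits the concavity of $r\mapsto(\log r)^p$ on $[e^{p-1},\infty)$ together with Jensen's inequality applied to the average over $B_{\e_m}$, which yields
\[
\|\ell_m(|\cdot|)\|_{L^p}\le |B_{\e_m}|^{1/p}\log\bigl(|B_{\e_m}|^{-1/p}\|\ell_{m-1}(|\cdot|)\|_{L^p}\bigr),
\]
from which the bound $b_m\log_{m-1}(p)$ follows by the inductive hypothesis. Your Laplace-type estimate is more direct: it handles all $m\ge1$ uniformly, with no separate base case and no Jensen, at the modest cost of locating the maximizer of $\log_{m-1}(u)\,e^{-du/(2p)}$ and invoking the slow-variation bound $\log_{m-1}(cp)\le C_m\log_{m-1}(p)$. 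Each approach has its appeal: the paper's is slicker once one sees the Jensen trick, while yours is more self-contained and avoids the Gamma/Stirling detour for $m=1$. One small notational point: for $m=1$ your critical-point equation should read simply $u^*=2p/d$ (the product $u^*\log u^*\cdots\log_{m-1}(u^*)$ collapses to a single factor $\log_0(u^*)=u^*$), but this does not affect the conclusion.
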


\begin{proof}
Given $p\ge \log(1/\e_m)$, we can easily estimate
\begin{equation}
\label{eq:iterated_sotto}
\begin{split}
\norm*{\ell_m(|\cdot|)}_{L^p}^p
=
\int_{B_{\e_m}}|\log_m(|x|)|^p\di x
\ge 
\int_{B_{e^{-p}}}|\log_m(|x|)|^p\di x
\ge 
C_d\,e^{-dp}|\log_{m-1}(p)|^p
\end{split}
\end{equation}
for all $m\in\mathbb N$, proving the lower bound in~\eqref{eq:iterated_log_growth}. 
For the upper bound in~\eqref{eq:iterated_log_growth}, we argue by induction.
If $m=1$, then by direct computation we have
\begin{equation*}
\norm*{\ell_1(|\cdot|)}_{L^p}^p
=
\int_{B_1}|\log(|x|)|^p\di x
=
C_d\int_0^1(-\log r)^p\,r^{d-1}\di r
=
C_d\,d^{-(p+1)}\,\Gamma(p+1)
\end{equation*}
and the desired upper bound readily follows by Stirling's formula.
If $m\ge2$, then
\begin{equation*}
\begin{split}
\norm*{\ell_m(|\cdot|)}_{L^p}
&=
\left(
\int_{B_{\e_m}}|\log_m(|x|)|^p\di x
\right)^{1/p}
\\
&=
\frac{|B_{\e_m}|^{1/p}}{p}
\left(
\mint{-}_{B_{\e_m}}\Big|\log\big(
\log_{m-1}(|x|)
\big)^p\Big|^p\di x
\right)^{1/p}.
\end{split}
\end{equation*} 
Now $r\mapsto(\log r)^p$ is concave on $\left[e^{p-1},+\infty\right)$.
Since $\log_{m-1}(\e_m)=-e$, for $p\ge2$  we have
\begin{equation*}
\begin{split}
\mint{-}_{B_{\e_m}}\Big|\log\big(
\log_{m-1}(|x|)
\big)^p\Big|^p\di x
&\le 
\left(
\log\left(
\mint{-}_{B_{\e_m}}
\big|
\log_{m-1}(|x|)
\big|^p\di x
\right)
\right)^p
\\
&\le 
p^p
\Big(
\log\left(
|B_{\e_m}|^{-1/p}
\,
\norm*{\ell_{m-1}(|\cdot|)}_{L^p}
\right)
\Big)^p
\end{split}
\end{equation*}
by Jensen's inequality, so that
\begin{equation*}
\begin{split}
\norm*{\ell_m(|\cdot|)}_{L^p}
\le 
|B_{\e_m}|^{1/p}
\log\left(
|B_{\e_m}|^{-1/p}
\,
\norm*{\ell_{m-1}(|\cdot|)}_{L^p}
\right),
\end{split}
\end{equation*}
readily yielding the conclusion.
\end{proof}

\begin{proof}[Proof of \cref{res:saturation}]
For each $m\in\mathbb N$, there exists $\delta_m>0$ such that 
\begin{equation*}
\phi_{\Theta_m}(r)
=
r\,|\log r|\,\Theta_m(\abs*{\log r})
=
\Theta_{m+1}(r)
\quad
\text{for all}\ r\in[0,\delta_m].
\end{equation*}
Hence $\phi_{\Theta_m}$ is concave on $[0,\delta_m]$ with $\phi_{\Theta_m}(0)=0$.
Therefore, we can estimate 
\begin{equation*}
\Phi_{\Theta_m}(t)
=
\int_0^t\phi_{\Theta_m}(s)\di s
\le 
t\,\phi_{\Theta_m}(t)
=
t\,\Theta_{m+1}(t)
\quad
\text{for all}\
t\in[0,\delta_m].
\end{equation*}
In particular, we readily infer that
\begin{equation*}
\begin{split}
\lim_{\e\to0^+}
\int_{\e}
^{\delta_m}
\frac{\di t}{\sqrt{\Phi_{\Theta_m}(t)}}
&\ge 
\lim_{\e\to0^+}
\int_{\e}
^{\delta_m}
\frac{\di t}{\sqrt{t\,\Theta_{m+1}(t)}}
\\
&=
\lim_{\e\to0^+}
\int_{\e}
^{\delta_m}
\frac{\di t}{t\,\abs{\log t}\, \abs{\log_2(t)} \cdots \abs{\log_{m+1}(t)} 
}
=+\infty,
\end{split}
\end{equation*}
so that $\Phi_{\Theta_m}$ satisfies~\eqref{eq:Phi_Theta_osgood}.
To conclude, we define $\theta_m\colon\R^d\to[0,+\infty)$ as
\begin{equation*}
\theta_m(x)
=
\ell_1(|x|)\,\ell_2(|x|)^2\dots\ell_{m+1}(|x|)^2
\quad
\text{for}\ x\in\R^d.
\end{equation*}
On the one side, arguing as in~\eqref{eq:iterated_sotto}, we easily see that 
\begin{equation*}
\begin{split}
\norm*{\theta_m}_{L^p}^p
&\ge
\int_{B_{e^{-p}}}
|\log_1(|x|)|^{p}\,|\log_2(|x|)|^{2p}\dots|\log_{m+1}(|x|)|^{2p}\di x
\\
&\ge 
C_d\,e^{-dp}\,p^{p}\,|\log_1(p)|^{2p}\dots|\log_{m}(p)|^{2p}
=C_d\,e^{-dp}\,\Theta_{m}(p)^p
\end{split}
\end{equation*}
for all $p\in[1,+\infty)$.
On the other side, by \cref{res:iterated_log_growth} and H\"older's inequality, we get
\begin{equation*}
\begin{split}
\|\theta_m\|_{L^p}
&\le 
\|\ell_1(|\cdot|)\|_{L^{(m+1)p}}
\,
\|\ell_2(|\cdot|)^2\|_{L^{(m+1)p}}
\dots
\|\ell_{m+1}(|\cdot|)^2\|_{L^{(m+1)p}}
\\
&=
\|\ell_1(|\cdot|)\|_{L^{(m+1)p}}
\,
\|\ell_2(|\cdot|)\|_{L^{2(m+1)p}}^2
\dots
\|\ell_{m+1}(|\cdot|)\|_{L^{2(m+1)p}}^2
\\
&\le 
C_m\,p\,\log_1(p)^2\dots\log_m(p)^2
=
C_m\,\Theta_m(p)
\end{split}
\end{equation*}
for all $p\ge p_m$ for some constant $C_m>0$ depending on $m$ only, yielding the conclusion.
\end{proof}

\begin{remark}[Saturation of $\Theta_\alpha(p)=p^{1/\alpha}$]
Fix $\alpha\in[1,\infty)$.
Arguing as above, one can easily see that $\theta_\alpha(x)=\ell_1(|x|)^{1/\alpha}$, for $x\in\R^d$, saturates the growth function $\Theta_\alpha(p)=p^{1/\alpha}$ in the sense of \cref{res:saturation}, giving an alternative proof of~\cite{HM18}*{Prop.~1.14}. 
\end{remark}


\begin{bibdiv}
\begin{biblist}

\bib{AGS08}{book}{
      author={Ambrosio, Luigi},
      author={Gigli, Nicola},
      author={Savaré, Giuseppe},
       title={Gradient flows in metric spaces and in the space of probability
  measures},
     edition={Second},
      series={Lectures in Mathematics ETH Z\"{u}rich},
   publisher={Birkh\"{a}user Verlag, Basel},
        date={2008},
        ISBN={978-3-7643-8721-1},
      review={\MR{2401600}},
}

\bib{A75}{article}{
      author={Arsen'ev, A.~A.},
       title={Global existence of a weak solution of {V}lasov's system of
  equations},
        date={1975},
        ISSN={0041-5553},
     journal={USSR Computational Mathematics and Mathematical Physics},
      volume={15},
      number={1},
       pages={131\ndash 143},
  url={https://www.sciencedirect.com/science/article/pii/004155537590141X},
}

\bib{BD85}{article}{
      author={Bardos, C.},
      author={Degond, P.},
       title={Global existence for the {V}lasov-{P}oisson equation in {$3$}
  space variables with small initial data},
        date={1985},
        ISSN={0294-1449},
     journal={Ann. Inst. H. Poincar\'{e} Anal. Non Lin\'{e}aire},
      volume={2},
      number={2},
       pages={101\ndash 118},
         url={http://www.numdam.org/item?id=AIHPC_1985__2_2_101_0},
      review={\MR{794002}},
}

\bib{CMZ19}{article}{
      author={Chen, Qionglei},
      author={Miao, Changxing},
      author={Zheng, Xiaoxin},
       title={The two-dimensional {E}uler equation in {Y}udovich and bmo-type
  spaces},
        date={2019},
        ISSN={0213-2230},
     journal={Rev. Mat. Iberoam.},
      volume={35},
      number={1},
       pages={195\ndash 240},
         url={https://doi.org/10.4171/rmi/1053},
      review={\MR{3914544}},
}

\bib{CLS18}{article}{
      author={Crippa, Gianluca},
      author={Ligabue, Silvia},
      author={Saffirio, Chiara},
       title={Lagrangian solutions to the {V}lasov-{P}oisson system with a
  point charge},
        date={2018},
        ISSN={1937-5093},
     journal={Kinet. Relat. Models},
      volume={11},
      number={6},
       pages={1277\ndash 1299},
         url={https://doi.org/10.3934/krm.2018050},
      review={\MR{3815144}},
}

\bib{CS21}{article}{
      author={Crippa, Gianluca},
      author={Stefani, Giorgio},
       title={An elementary proof of existence and uniqueness for the {E}uler
  flow in localized {Y}udovich spaces},
        date={2021},
        note={Preprint, available at
  \href{https://arxiv.org/abs/2110.15648v2}{arXiv:2110.15648v2}},
}

\bib{DMS15}{article}{
      author={Desvillettes, Laurent},
      author={Miot, Evelyne},
      author={Saffirio, Chiara},
       title={Polynomial propagation of moments and global existence for a
  {V}lasov-{P}oisson system with a point charge},
        date={2015},
        ISSN={0294-1449},
     journal={Ann. Inst. H. Poincar\'{e} C Anal. Non Lin\'{e}aire},
      volume={32},
      number={2},
       pages={373\ndash 400},
         url={https://doi.org/10.1016/j.anihpc.2014.01.001},
      review={\MR{3325242}},
}

\bib{GS15}{article}{
      author={Garg, Rahul},
      author={Spector, Daniel},
       title={On the role of {R}iesz potentials in {P}oisson's equation and
  {S}obolev embeddings},
        date={2015},
        ISSN={0022-2518},
     journal={Indiana Univ. Math. J.},
      volume={64},
      number={6},
       pages={1697\ndash 1719},
         url={https://doi.org/10.1512/iumj.2015.64.5706},
      review={\MR{3436232}},
}

\bib{GJP00}{article}{
      author={Gasser, I.},
      author={Jabin, P.-E.},
      author={Perthame, B.},
       title={Regularity and propagation of moments in some nonlinear {V}lasov
  systems},
        date={2000},
        ISSN={0308-2105},
     journal={Proc. Roy. Soc. Edinburgh Sect. A},
      volume={130},
      number={6},
       pages={1259\ndash 1273},
         url={https://doi.org/10.1017/S0308210500000676},
      review={\MR{1809103}},
}

\bib{GS85}{article}{
      author={Glassey, Robert~T.},
      author={Schaeffer, Jack},
       title={On symmetric solutions of the relativistic {V}lasov-{P}oisson
  system},
        date={1985},
        ISSN={0010-3616},
     journal={Comm. Math. Phys.},
      volume={101},
      number={4},
       pages={459\ndash 473},
         url={http://projecteuclid.org/euclid.cmp/1104114243},
      review={\MR{815195}},
}

\bib{GS01}{article}{
      author={Glassey, Robert~T.},
      author={Schaeffer, Jack},
       title={On global symmetric solutions to the relativistic
  {V}lasov-{P}oisson equation in three space dimensions},
        date={2001},
        ISSN={0170-4214},
     journal={Math. Methods Appl. Sci.},
      volume={24},
      number={3},
       pages={143\ndash 157},
  url={https://doi.org/10.1002/1099-1476(200102)24:3<143::AID-MMA202>3.3.CO;2-3},
      review={\MR{1810884}},
}

\bib{HR07}{article}{
      author={Had\v{z}i\'{c}, Mahir},
      author={Rein, Gerhard},
       title={Global existence and nonlinear stability for the relativistic
  {V}lasov-{P}oisson system in the gravitational case},
        date={2007},
        ISSN={0022-2518},
     journal={Indiana Univ. Math. J.},
      volume={56},
      number={5},
       pages={2453\ndash 2488},
         url={https://doi.org/10.1512/iumj.2007.56.3064},
      review={\MR{2360616}},
}

\bib{HM18}{incollection}{
      author={Holding, Thomas},
      author={Miot, Evelyne},
       title={Uniqueness and stability for the {V}lasov-{P}oisson system with
  spatial density in {O}rlicz spaces},
        date={2018},
   booktitle={Mathematical analysis in fluid mechanics---selected recent
  results},
      series={Contemp. Math.},
      volume={710},
   publisher={Amer. Math. Soc., [Providence], RI},
       pages={145\ndash 162},
         url={https://doi.org/10.1090/conm/710/14368},
      review={\MR{3818672}},
}

\bib{I23}{article}{
      author={Inversi, Marco},
       title={Lagrangian solutions to the transport-Stokes system},
        date={2023},
note={Preprint, available at \href{https://arxiv.org/abs/2303.05797}{arXiv:2303.05797}}
}

\bib{IS23}{article}{
      author={Inversi, Marco},
      author={Stefani, Giorgio},
       title={Lagrangian stability for a system of non-local continuity
  equations under Osgood condition},
        date={2023},
note={Preprint, available at \href{https://arxiv.org/abs/2301.11822}{arXiv:2301.11822}}
}

\bib{I61}{article}{
      author={Iordanski\u{\i}, S.~V.},
       title={The {C}auchy problem for the kinetic equation of plasma},
        date={1961},
        ISSN={0371-9685},
     journal={Trudy Mat. Inst. Steklov.},
      volume={60},
       pages={181\ndash 194},
      review={\MR{0132278}},
}

\bib{KT08}{article}{
      author={Kiessling, M. K.-H.},
      author={Tahvildar-Zadeh, A.~S.},
       title={On the relativistic {V}lasov-{P}oisson system},
        date={2008},
        ISSN={0022-2518},
     journal={Indiana Univ. Math. J.},
      volume={57},
      number={7},
       pages={3177\ndash 3207},
         url={https://doi.org/10.1512/iumj.2008.57.3387},
      review={\MR{2492230}},
}

\bib{LS22}{article}{
      author={Leopold, Nikolai},
      author={Saffirio, Chiara},
       title={Propagation of moments for large data and semiclassical limit to
  the relativistic {V}lasov equation},
        date={2022},
     journal={SIAM J. Math. Anal.},
        note={To appear, preprint available at
  \href{https://arxiv.org/abs/2203.03031}{arXiv:2203.03031}},
}

\bib{LP91}{article}{
      author={Lions, P.-L.},
      author={Perthame, B.},
       title={Propagation of moments and regularity for the {$3$}-dimensional
  {V}lasov-{P}oisson system},
        date={1991},
        ISSN={0020-9910},
     journal={Invent. Math.},
      volume={105},
      number={2},
       pages={415\ndash 430},
         url={https://doi.org/10.1007/BF01232273},
      review={\MR{1115549}},
}

\bib{L06}{article}{
      author={Loeper, Gr\'{e}goire},
       title={Uniqueness of the solution to the {V}lasov-{P}oisson system with
  bounded density},
        date={2006},
        ISSN={0021-7824},
     journal={J. Math. Pures Appl. (9)},
      volume={86},
      number={1},
       pages={68\ndash 79},
         url={https://doi.org/10.1016/j.matpur.2006.01.005},
      review={\MR{2246357}},
}

\bib{MB02}{book}{
      author={Majda, Andrew~J.},
      author={Bertozzi, Andrea~L.},
       title={Vorticity and incompressible flow},
      series={Cambridge Texts in Applied Mathematics},
   publisher={Cambridge University Press, Cambridge},
        date={2002},
      volume={27},
        ISBN={0-521-63057-6; 0-521-63948-4},
      review={\MR{1867882}},
}

\bib{M16}{article}{
      author={Miot, Evelyne},
       title={A uniqueness criterion for unbounded solutions to the
  {V}lasov-{P}oisson system},
        date={2016},
        ISSN={0010-3616},
     journal={Comm. Math. Phys.},
      volume={346},
      number={2},
       pages={469\ndash 482},
         url={https://doi.org/10.1007/s00220-016-2707-7},
      review={\MR{3535893}},
}

\bib{P12}{article}{
      author={Pallard, Christophe},
       title={Moment propagation for weak solutions to the {V}lasov-{P}oisson
  system},
        date={2012},
        ISSN={0360-5302},
     journal={Comm. Partial Differential Equations},
      volume={37},
      number={7},
       pages={1273\ndash 1285},
         url={https://doi.org/10.1080/03605302.2011.606863},
      review={\MR{2942983}},
}

\bib{P14}{article}{
      author={Pallard, Christophe},
       title={Space moments of the {V}lasov-{P}oisson system: propagation and
  regularity},
        date={2014},
        ISSN={0036-1410},
     journal={SIAM J. Math. Anal.},
      volume={46},
      number={3},
       pages={1754\ndash 1770},
         url={https://doi.org/10.1137/120881178},
      review={\MR{3200421}},
}

\bib{P92}{article}{
      author={Pfaffelmoser, K.},
       title={Global classical solutions of the {V}lasov-{P}oisson system in
  three dimensions for general initial data},
        date={1992},
        ISSN={0022-0396},
     journal={J. Differential Equations},
      volume={95},
      number={2},
       pages={281\ndash 303},
         url={https://doi.org/10.1016/0022-0396(92)90033-J},
      review={\MR{1165424}},
}

\bib{R97}{article}{
      author={Robert, Raoul},
       title={Unicit\'{e} de la solution faible \`a support compact de
  l'\'{e}quation de {V}lasov-{P}oisson},
        date={1997},
        ISSN={0764-4442},
     journal={C. R. Acad. Sci. Paris S\'{e}r. I Math.},
      volume={324},
      number={8},
       pages={873\ndash 877},
         url={https://doi.org/10.1016/S0764-4442(97)86961-3},
      review={\MR{1450441}},
}

\bib{S09}{article}{
      author={Salort, Delphine},
       title={Transport equations with unbounded force fields and application
  to the {V}lasov-{P}oisson equation},
        date={2009},
        ISSN={0218-2025},
     journal={Math. Models Methods Appl. Sci.},
      volume={19},
      number={2},
       pages={199\ndash 228},
         url={https://doi.org/10.1142/S0218202509003401},
      review={\MR{2498433}},
}

\bib{T04}{article}{
      author={Taniuchi, Yasushi},
       title={Uniformly local {$L^p$} estimate for 2-{D} vorticity equation and
  its application to {E}uler equations with initial vorticity in {${\bf
  bmo}$}},
        date={2004},
        ISSN={0010-3616},
     journal={Comm. Math. Phys.},
      volume={248},
      number={1},
       pages={169\ndash 186},
         url={https://doi.org/10.1007/s00220-004-1095-6},
      review={\MR{2104609}},
}

\bib{TTY10}{article}{
      author={Taniuchi, Yasushi},
      author={Tashiro, Tomoya},
      author={Yoneda, Tsuyoshi},
       title={On the two-dimensional {E}uler equations with spatially almost
  periodic initial data},
        date={2010},
        ISSN={1422-6928},
     journal={J. Math. Fluid Mech.},
      volume={12},
      number={4},
       pages={594\ndash 612},
         url={https://doi.org/10.1007/s00021-009-0304-7},
      review={\MR{2749445}},
}

\bib{UO78}{article}{
      author={Ukai, Seiji},
      author={Okabe, Takayoshi},
       title={On classical solutions in the large in time of two-dimensional
  {V}lasov's equation},
        date={1978},
        ISSN={0388-0699},
     journal={Osaka Math. J.},
      volume={15},
      number={2},
       pages={245\ndash 261},
         url={http://projecteuclid.org/euclid.ojm/1200771271},
      review={\MR{504289}},
}

\bib{W20}{article}{
      author={Wang, Xuecheng},
       title={Global solution of the {3D} relativistic {V}lasov-{P}oisson
  system for a class of large data},
        date={2020},
        note={Preprint, available at
  \href{https://arxiv.org/abs/2003.14191v3}{arXiv:2003.14191v3}},
}

\bib{Y95}{article}{
      author={Yudovich, V.~I.},
       title={Uniqueness theorem for the basic nonstationary problem in the
  dynamics of an ideal incompressible fluid},
        date={1995},
        ISSN={1073-2780},
     journal={Math. Res. Lett.},
      volume={2},
      number={1},
       pages={27\ndash 38},
         url={https://doi.org/10.4310/MRL.1995.v2.n1.a4},
      review={\MR{1312975}},
}

\end{biblist}
\end{bibdiv}

\end{document}